\numberwithin{equation}{section}
\numberwithin{figure}{section}
\theoremstyle{plain}
\newtheorem{thm}{\protect\theoremname}
\theoremstyle{plain}
\newtheorem{prop}[thm]{\protect\propositionname}
\theoremstyle{plain}
\newtheorem{lem}[thm]{\protect\lemmaname}
\theoremstyle{plain}
\newtheorem{cor}[thm]{\protect\corollaryname}
\theoremstyle{plain}
\newtheorem{conjecture}[thm]{\protect\conjecturename}
\newlength{\perspective}
\providecommand{\conjecturename}{Conjecture}
\providecommand{\corollaryname}{Corollary}
\providecommand{\lemmaname}{Lemma}
\providecommand{\propositionname}{Proposition}
\providecommand{\theoremname}{Theorem}
\begin{document}
\selectlanguage{american}%
\global\long\def\half{\frac{1}{2}}%

\selectlanguage{english}%
\global\long\def\OO{\mathcal{O}}%

\global\long\def\DD{\mathcal{D}}%

\global\long\def\KK{\mathcal{K}}%

\global\long\def\EE{\mathcal{E}}%

\global\long\def\eered{\mathcal{E}_{red}^{\bullet}}%

\global\long\def\fred{F_{red}^{\bullet}}%

\global\long\def\EEE{\mathbb{E}}%

\global\long\def\FF{\mathcal{\mathcal{F}}}%

\global\long\def\FFF{\mathbb{F}}%

\global\long\def\GG{\mathcal{G}}%

\global\long\def\ss{\mathbb{S}}%

\global\long\def\III{\mathcal{I}}%

\global\long\def\LL{\mathcal{L}}%

\global\long\def\XY{X_{-y}}%

\global\long\def\XX{\mathcal{X}}%

\global\long\def\PPP{\mathcal{P}}%

\global\long\def\PXB{\mathcal{P}_{\chi}\left(X,\beta\right)}%

\global\long\def\PX{\mathcal{P}_{\chi}(X,i_{*}\beta)}%

\global\long\def\PXX{\mathcal{P}_{\chi}\left(\bar{X},\bar{i}_{*}\beta\right)}%

\global\long\def\PS{\mathcal{P}_{\chi}(S,\beta)}%

\global\long\def\PSL{\mathcal{P}_{\chi}(S,|\LL|)}%

\global\long\def\CC{\mathcal{C}}%

\global\long\def\ZZZ{\mathcal{Z}}%

\global\long\def\CCC{\mathbb{C}}%

\global\long\def\CS{\mathbb{C}^{\times}}%

\global\long\def\SN{S^{[n]}}%

\global\long\def\AA{\mathbb{A}}%

\global\long\def\BB{\mathbb{B}}%

\global\long\def\KKK{\mathbb{K}}%

\global\long\def\XXX{\mathbb{X}}%

\global\long\def\HH{\mathbb{H}}%

\global\long\def\LLL{\mathbb{L}}%

\global\long\def\PP{\mathbb{P}}%

\global\long\def\II{\mathbb{I}^{\bullet}}%

\global\long\def\barII{\bar{\mathbb{I}}^{\bullet}}%

\global\long\def\CS{\mathbb{C}^{\times}}%

\global\long\def\QQ{\mathbb{Q}}%

\global\long\def\ZZ{\mathbb{Z}}%

\global\long\def\tweight{\mathfrak{t}}%

\global\long\def\tweighth{\mathfrak{t}^{1/2}}%

\global\long\def\tweightc{\left(-\mathfrak{t}^{-\frac{1}{2}}\right)}%

\global\long\def\tweighta{1-\mathfrak{t}}%

\global\long\def\tweightb{\mathfrak{t}^{-1/2}-\mathfrak{t}^{1/2}}%

\global\long\def\taus{\gamma\left(\mathcal{O}_{s}\right)}%

\global\long\def\tausi{\gamma\left(\OO_{s_{i}}\right)}%

\global\long\def\tauss{\hat{\gamma}\left(\OO_{s}\right)}%

\global\long\def\taussi{\hat{\gamma}\left(\OO_{s_{i}}\right)}%

\global\long\def\tausss{\bar{\gamma}(\OO_{s})}%

\global\long\def\tausssi{\bar{\gamma}(\OO_{s_{i}})}%

\global\long\def\ya{1-y}%

\global\long\def\yb{y^{-1/2}-y^{1/2}}%

\global\long\def\TM{TS^{[n]}\times\mathbb{P}^{\chi(\LL)-1}}%

\global\long\def\ee{\mathcal{L}^{[n]}\boxtimes\mathcal{O}(1)}%

\global\long\def\id{\text{id}}%

\global\long\def\Pxlmn{P_{\chi,\beta}^{red}(X,\left[pt\right]^{m})}%

\global\long\def\Pslmn{P_{\chi,\beta}^{red}(S,\left[pt\right]^{m})}%

\global\long\def\Pxlm{P^{red}(X,\left[pt\right]^{m}}%

\global\long\def\pxlmn{P_{X,\beta,\chi}(s_{1},\ldots,s_{m})}%

\global\long\def\bpxlmn{\bar{P}_{X,\beta,\chi}\left(s_{1},\ldots,s_{m}\right)}%

\global\long\def\pxlm{P_{X,\beta,m}}%

\global\long\def\pslmn{P_{S,\mathcal{L},m,\chi}}%

\global\long\def\bpslmn{\left(\frac{1}{y}\right)^{\frac{d-m}{2}}\pslmn}%

\global\long\def\bpslm{P_{S,\LL,m}}%

\global\long\def\barchi{\bar{\chi}}%

\global\long\def\res{\text{res}}%

\global\long\def\rk{\text{rk}}%

\global\long\def\coeff{\text{Coeff}}%

\global\long\def\hilb{\text{Hilb}}%

\global\long\def\pic{\text{Pic}}%

\global\long\def\sym{\text{Sym}}%

\global\long\def\HOM{\mathcal{H}om}%

\global\long\def\Ext{\text{Ext}}%

\global\long\def\hom{\text{Hom}}%

\global\long\def\hhom{\mathcal{H}om}%

\global\long\def\tor{\mathcal{T}or}%

\global\long\def\ext{\mathcal{E}xt}%

\global\long\def\proj{\text{Proj}}%

\global\long\def\ch{\text{ch}}%

\global\long\def\td{\text{td}}%

\global\long\def\codim{\text{codim}}%

\global\long\def\div{\text{div}}%

\renewcommand{\div}{\text{div}}

\global\long\def\cycle{\text{cycle}}%

\global\long\def\spec{\text{Spec}}%

\global\long\def\supp{\text{Supp}}%

\global\long\def\d{h-1+n+\int_{\beta}c_{1}(S)+h^{0,2}(S)}%

\global\long\def\Lim{{\displaystyle \lim_{\leftarrow}}}%

\title{A Refinement of Kool-Thomas Invariants via Equivariant K-theoretic
invariants}
\author{Rizal Afgani}
\address{Analysis and Geometry Group, Faculty of Mathematics and Natural Sciences,
Bandung Institute of Technology, Bandung 40132, INDONESIA}
\email{rafgani@math.itb.ac.id}
\begin{abstract}
In this article we are defining a refinemement of Kool-Thomas invariants
of local surfaces via the equivariant $K$-theoretic invariants proposed
by Nekrasov and Okounkov. Kool and Thomas defined the reduced obstruction
theory for the moduli of stable pairs $\PX$ as the degree of the
virtual class $\left[\PS\right]^{red}$ afted we apply $\tau([pt])^{m}\in H^{*}(\PX,\ZZ)$.
$\tau([pt])$ contain the information of the incidence of a point
and a curve supporting a $(\FF,s$).

The $K$-theoretic invariants proposed by Nekrasov and Okounkov is
the equivariant holomorphic Euler characteristic of $\OO_{\PX}^{vir}\otimes K_{vir}^{\half}$.
We introduce two classes $\taus$ and $\tausss$ in the Grothendieck
group of vector bundles on the moduli space of stable pairs of the
local surfaces that contains the information of the incidence of a
curve with a point. We define two invariants $\pxlmn$ and $\bpxlmn$
each corresponds to $\taus$ and $\tausss$. We found that the contribution
of $\PS\subset\PPP^{G}$ to $\pxlmn$ and to $\bpxlmn$ are the same.
Moreover, if we evaluate this contribution at $\tweight=1$ we get
the Kool-Thomas invariants.

The generating function of this contribution contain the same information
as the generating function of the refined curve counting invariants
defined by G\"ottsche and Shende in \cite{GS:14}. After a change
of variable there exist a coefficient $N_{\delta[S,\LL]}^{\delta}(y)$
of the generating function of the refined curve counting that counts
the number of $\delta$-nodal curve in $\PP^{\delta}\subset|\LL|$.
We conjecture that after the same change of variable the corresponding
coefficient $M_{\delta[S,\LL]}^{\delta}(y)$ coming from the generating
function of the contribution of $\PS$ to $\bpxlmn$ is identical
with $N_{\delta[S,\LL]}^{\delta}(y).$

\textbf{Keywords: }Kool-Thomas invariants, $K$-theoretic invariants,
G\"ottsche Shende invariants
\end{abstract}

\maketitle

\section{Introduction}

Fix a nonsingular projective surface $S$ and a sufficently ample
line bundle $\LL$ on $S$. A $\delta$-nodal curve $C$ on $S$ is
a $1$ dimensional subvariety of $S$ which has nodes at $\delta$
points and is regular outside these singular points. For any scheme
$Y$, let $Y^{[n]}$ be the Hilbert scheme of $n$-points i.e. $Y^{[n]}$
parametrizes subschemes $Z\subset Y$ of length $n$. Given a family
of curves $\CC\rightarrow B$ over a base $B$, we denote by $\hilb^{n}(\CC/B)$
the relative Hilbert scheme of points. Kool, Thomas and Shende showed
that some linear combinations $n_{r,C}$ of the Euler characteristic
of $C^{[n]}$ counts the number of curves of arithmetic genus $r$
mapping to $C$. Applying this to the family $\CC\rightarrow\PP^{\delta}$
where $\PP^{\delta}\subset|\LL|$, the number of $\delta$-nodal curves
is given by a coefficient of the generating function of the Euler
characteristic of $\hilb(\CC/\PP^{\delta})$ after change of variable\cite{KST:11}.
By replacing euler characteristic with Hirzebruch $\chi_{y}$-genus,
G\"otsche and Shende give a refined counting of $\delta$-nodal curves.

Pandharipande and Thomas showed that a stable pair $(\FF,s)$ on a
surface $S$ is equivalent to the pair $(C,Z)$ of a curve $C$ on
$S$ supporting the sheaf $\FF$ with $Z\subset C$ a subscheme of
finite length. Thus the moduli space of stable pairs on a surface
$S$ is a relative Hilbert scheme of points corresponding to a family
of curves on $S$.

The study of the moduli space of stable pairs on Calabi-Yau threefold
$Y$ is an active area of research. This moduli space gives a compactification
of the moduli space of nonsingular curves in $Y$. To get an invariant
of the moduli space Behrend and Fantechi introduce the notion of perfect
obstruction theory. With this notion we can construct a class in the
Chow group of dimension 0 that is invariant under some deformations
of $Y$\cite{BF:97}.

The homological invariants of the stable pair moduli space $\PX$
of the total space $X$ of $K_{S}$ of some smooth projective surface
$S$ contain the information of the number of $\delta$-nodal curves
in a hyperplane $\PP^{\delta}\subset|\LL|$. Notice that $X$ is Calabi-Yau.
There exist a morphism of schemes $\div:\PX\rightarrow|\LL|$ that
maps a point $(\FF,s)\in\PX$ to a divisor $\div\left(\pi_{*}\FF\right)$
that support $\pi_{*}\FF$ on $S$ where $\pi:X\rightarrow S$ is
the structure morphism of $X$ as a vector bundle over $S$. Using
descendents, Kool and Thomas translate the information of the incidence
of a curve with a point into cutting down the moduli space by a hypersurface
pulledback from $|\LL|$ so that after cutting down, we have a moduli
space that parameterize Hilbert scheme of curves in $\PP^{\delta}$\cite{KT:14}.

The famous conjecture of Maulik, Nekrasov Okounkov and Pandharipande
states that the invariants corresponding to the moduli space of stable
pairs have the same information as the invariants defined from the
moduli space of stable maps and the Hilbert schemes.

The next development in the theory of PT invariants is to give a refinement
of the homological invariant. The end product of this homological
invariant is a number. A refinement of this invariant would be a Laurent
polynomial in a variable $t$ such that when we evaluate $t$ at $1$
we get the homological invariant.

There are several methods that have been introduced to give a refinement
for DT invariants, for example both motivic and $K$-theoretic definitions.
In this thesis we use the $K$-theoretic definition which has been
proposed by Nekrasov and Okounkov in \cite{NO:14} where we compute
the holomorphic Euler characteristic of the twisted virtual structure
sheaf of the coresponding moduli space. In the case when $S=\PP^{2}$
or $S=\PP^{1}\times\PP^{1}$ Choi, Katz and Klemm have computed a
$K$-theoretic invariant of the moduli space of stable pairs in the
paper \cite{CKK:12}. Their computation does not include any information
about the incidence of subschemes of $S$.

\section{Equivariant Chow Groups and $K$-theory\label{sec:Equivariant--theory}}

In this section we will describe the notatioan and definition we use
regarding equivariant Chow Groups and $K$-theory.

\subsection{Equivariant Chow Groups}

In this section we review the definition of equivariant Chow groups
given in \cite{EG:98,EG:00}. We will use $g$ to denote the dimension
of our group $G$ as a scheme over $\CCC$.

Given $i\in\ZZ.$ Let $X$ be a $G$-scheme with $\dim\,X=d$. Let
$V$ be $G$-vector space of dimension $l$. Assume that there exists
an open subscheme $U\subset V$ and a principal $G$-bundle $\pi:U\rightarrow U_{G}.$By
giving $X\times V$ a diagonal action of $G$, assume furthermore
that there exist a principal $G$-bundle $\pi_{X}:X\times U\rightarrow\left(X\times U\right)/G$.
We will use $X\times_{G}U$ to denote $\left(X\times U\right)/G$.
Assume also that $V\setminus U$ has codimension greater than $d-i$,
then the equivariant Chow group is defined as 
\[
A_{i}^{G}(X):=A_{i+l-g}(X\times_{G}U).
\]
The definition is independent up to isomorphism of the choice of a
representation as long as $V\setminus U$ is of codimension greater
than $d-i$ .

For a $G$-equivariant map $f:X\rightarrow Y$ with property $P$
where $P$ is either proper, flat, smooth, or regular embedding the
$G$-equivariant map $f\times1:X\times U\rightarrow Y\times U$ has
the property $P$ since all of these properties are preserved by a
flat base change. Moreover, the corresponding morphism $f_{G}:X\times_{G}U\rightarrow Y\times_{G}U$
also has property $P$. In fact, these properties are local on the
target in the Zariski topology and for any trivialization $(V_{i},\bar{\varphi}_{i})_{i\in\Lambda}$
of $\pi:U\rightarrow U_{G}$ the restriction of $f_{G}$ on $\pi_{X}(X\times\pi^{-1}(V_{i}))$
is isomorphic to $f\times\id_{V_{i}}$. So from the definition, for
a flat $G$ -map $f:X\rightarrow Y$ of codimension $l$ we can define
pullback map $f^{*}:A_{i}^{G}(Y)\rightarrow A_{i+l}^{G}(X)$ for equivariant
Chow groups. Similarly, for regular embedding $f:X\rightarrow Y$
of codimension $d$ we have a Gysin homomorphism $f^{*}:A_{i}^{G}(Y)\rightarrow A_{i-d}^{G}(X)$
and for proper $G$-map $f:X\rightarrow Y$ we can define pushforward
$f_{*}:A_{i}^{G}(X)\rightarrow A_{i}^{G}(Y)$ for equivariant Chow
groups.

For $G=T_{1}$ and an $l+1$-dimensional weight space $V_{\chi}$
we have a principal $G$-bundle $\pi_{U}:=V_{\chi}\setminus\{0\}\rightarrow\PP(V_{\chi})$.
There exist a principal $G$-bundle $\pi_{X}:X\times U\rightarrow X\times_{G}U$.
And since $\codim\,V_{\chi}\setminus U$ is $l+1$, for each $i\in\text{\ensuremath{\ZZ\ }}$
we can take $A_{i+l}\left(X\times_{G}U)\right)$ to represent $A_{i}^{G}(X)$
if $l+i\geq d$. We can also fix $\chi$ to be $-1$ to cover all
$i$. 

Thus we fix the following notation. For each positive integer $l$
let $V_{l}$ be a $T_{1}$-space of weight $-1$ with coordinate $x_{0},\ldots,x_{l}$.
Thus $V_{l-1}$ is the zero locus of the last coordinate of $V_{l}$.
We use $U_{l}$ to denote $V_{l}\setminus\{0\}$ and $X_{l}$ to denote
$X\times_{G}U_{l}$ and $\pi_{X,l}:X\times U_{l}\rightarrow X_{l}$
the corresponding principal bundle. Thus we have the following direct
system 
\begin{equation}
\xymatrix{\ldots\ar[r] & X_{l-1}\ar[r]^{j_{X,l-1}} & X_{l}\ar[r]^{j_{X,l}} & X_{l+1}\ar[r]^{j_{X,l+1}} & \ldots}
\label{direct-system}
\end{equation}

\selectlanguage{american}%
There is a projection from $\xi:V_{l+1}\rightarrow V_{l}$ by forgetting
the last coordinate such that $j_{l}:V_{l}\rightarrow V_{l+1}$ is
the zero section of $\xi.$ By removing the fiber of $p:=(0:0:\ldots:0:1)\in\PP(V_{l+1})$,
the corresponding projection $\xi:X_{l+1}\setminus\pi_{X}^{-1}(p)\rightarrow X_{l}$
is a line bundle over $X_{l}$ such that $j_{X,l}:X_{l}\rightarrow X_{l+1}\setminus\pi_{X,l+1}^{-1}(p)$
is the zero section. Note that $\dim\text{\ensuremath{\pi_{X,l+1}^{-1}(x)}=\ensuremath{\dim\,X=d}}$.
Thus for $i\geq d-l$ the restriction map $A_{i+l+1}(X_{l+1})\rightarrow A_{i+l+1}(X_{l+1}\setminus\pi_{X,l+1}^{-1}(p))$
is an isomorphism. In general this restriction is a surjection. Since
$\hat{j}_{X,l}:X_{l}\rightarrow X_{l+1}\setminus\pi_{X,l+1}^{-1}(p)$
is the zero section of $\xi$ , the Gysin homomorphism $\hat{j}_{X,n}^{!}:A_{k+1}(X_{l+1}\setminus\pi_{X,l+1}^{-1}(p))\rightarrow A_{k}(X_{l})$
is an isomorphism. Since $j$ is a regular embedding we have a Gysin
homomorphism $j^{!}:A_{k+1}\left(X_{l+1}\right)\rightarrow A_{k}(X_{l})$
which is the composition of the above homomorphisms.

\sloppy The direct system \ref{direct-system} induces an inverse
system 
\[
\xymatrix{\ldots & A_{*}(X_{l-1})\ar[l] & A_{*}(X_{l})\ar[l]_{j_{X,l-1}^{!}} & A_{*}(X_{l+1})\ar[l]_{j_{X,l}^{!}} & \ldots}
\]
 of abelian groups. Let $(\Lim A(X_{l}),\lambda_{l})$ be the inverse
limit of the above inverse system. \foreignlanguage{english}{From
the definition of equivariant Chow groups, $A_{i}^{G}(X)=A_{i+n}(X_{n})$
for $i\geq d-n$ so that we can identify $\prod_{i=d-n}^{d}A_{i}^{G}(X)$
with the group $\prod_{i=d}^{d+n}A_{i}(X_{n})$. Recall that $\left({\displaystyle \prod_{i=-\infty}^{d}}A_{i}^{G}(X),\nu_{i}\right)$
where $\nu_{n}:$${\displaystyle \prod_{i=-\infty}^{d}}A_{i}^{G}(X)\rightarrow\prod_{i=d-n}^{d}A^{G}(X)$
is defined by $(a_{d},a_{d-1}\ldots)\mapsto\left(a_{d},\ldots,a_{d-n}\right)$
is the inverse limit of the inverse system defined by the projection
$p_{X,n}:$$\prod_{i=d-n-1}^{d}A^{G}(X)\rightarrow\prod_{i=d-n}^{d}A^{G}(X)$,
$(a_{d},\ldots,a_{d-n},a_{d-n-1})\mapsto(a_{d},\ldots,a_{d-n})$.
}After indentifying $\prod_{i=d-n}^{d}A_{i}^{G}(X)$ with \foreignlanguage{english}{$\prod_{i=d}^{d+n}A_{i}(X_{n})$,
$p_{X,n}$ and $j_{X,n}^{!}$ are the same homomorphism. The compostion
of the projections $\hat{\xi}_{n}:$$A_{*}(X_{n})\rightarrow\prod_{i=d}^{d+n}A(X_{n})$
with $\lambda_{n}:$$\Lim A_{*}(X_{l})\rightarrow A_{*}(X_{n})$ are
homorphisms $\xi_{i}:$$\Lim A_{*}(X)\rightarrow\prod_{i=d-n}^{d}A_{i}^{G}(X)$
satisfying $p_{X,n+1}\circ\xi_{n}=p_{X,n}$ so that by the universal
property of inverse limit we have a group homomorphism $\xi:$$\Lim A_{*}(X_{n})\rightarrow{\displaystyle \prod_{i=-\infty}^{d}}A_{i}^{G}(X)$
satisfying $p_{X,n}\circ\xi=\xi_{n}$. We will use the following proposition
later.}
\begin{prop}
\label{lim-isom}$\xi:\Lim A_{*}(X_{l})\rightarrow{\displaystyle \prod_{i=-\infty}^{d}}A_{i}^{G}(X)$
is an isomorphism .
\end{prop}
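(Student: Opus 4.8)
The plan is to exhibit explicit inverse maps and check they are mutually inverse, using only the two facts already established in the discussion preceding the statement: first, that the Gysin maps $j_{X,l}^{!}$ agree, after the identification $\prod_{i=d-n}^{d}A_{i}^{G}(X)\cong\prod_{i=d}^{d+n}A_{i}(X_{n})$, with the projections $p_{X,n}$; and second, that $\prod_{i=-\infty}^{d}A_{i}^{G}(X)$ together with the maps $\nu_n$ is the inverse limit of the system of these projections. The morphism $\xi$ is already constructed by the universal property of the inverse limit $\left(\prod_{i=-\infty}^{d}A_{i}^{G}(X),\nu_n\right)$, so it suffices to produce a two-sided inverse.

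First I would build a candidate inverse $\eta\colon\prod_{i=-\infty}^{d}A_{i}^{G}(X)\to\Lim A_{*}(X_l)$. For each $n$ I have the surjection $\hat{\xi}_{n}\colon A_{*}(X_{n})\to\prod_{i=d}^{d+n}A_i(X_n)$, but I actually need a map in the other direction; the point is that $\Lim A_{*}(X_l)$ is computed as a subgroup of $\prod_n A_{*}(X_n)$, and a compatible system in $\prod_{i=-\infty}^{d}A_i^{G}(X)$ gives, via the identification $\prod_{i=d-n}^{d}A_i^{G}(X)\cong\prod_{i=d}^{d+n}A_i(X_n)$, a genuine class in $A_{*}(X_n)$ for each $n$ once one observes that $A_{*}(X_n)=\bigoplus_i A_i(X_n)$ and that for $i$ outside the range $[d,d+n]$ the group $A_i(X_n)$ is determined by the lower truncations already. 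Concretely: the key geometric input is that $\xi\colon X_{l+1}\setminus\pi_{X,l+1}^{-1}(p)\to X_l$ is a line bundle, so $A_{*}(X_l)\to A_{*}(X_{l+1}\setminus\pi_{X,l+1}^{-1}(p))$ is an isomorphism (homotopy invariance of Chow groups for affine bundles), and the restriction $A_{*}(X_{l+1})\to A_{*}(X_{l+1}\setminus\pi_{X,l+1}^{-1}(p))$ is an isomorphism in the degrees $\ge d$ and surjective in general. Stabilizing in $l$, each fixed degree $A_i(X_l)$ becomes independent of $l$ for $l\gg 0$ and equals $A_i^{G}(X)$, so the inverse limit $\Lim A_{*}(X_l)$ is canonically $\prod_{i\le d}\big(\lim_{l} A_{i+l}(X_l)\big)=\prod_{i\le d}A_i^{G}(X)$, and this identification is exactly $\xi$.

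So the cleanest route is: (1) fix $i\le d$ and show the tower $\cdots\xleftarrow{j^{!}} A_{i+l}(X_l)\xleftarrow{j^{!}} A_{i+l+1}(X_{l+1})\xleftarrow{j^{!}}\cdots$ has all maps isomorphisms once $l\ge d-i$, using the line-bundle description of $\xi$ and the long exact sequence for the open inclusion $X_{l+1}\setminus\pi^{-1}(p)\hookrightarrow X_{l+1}$ together with the fact (noted in the text) that the boundary piece is supported in codimension making it vanish in the relevant range; (2) conclude that the inverse limit of this fixed-degree tower is $A_i^{G}(X)$ with transition maps to the $A_{i+l}(X_l)$ being the defining isomorphisms; (3) assemble over all $i\le d$, checking that the product of these towers is $\Lim A_{*}(X_l)$ because the Gysin maps $j_{X,l}^{!}$ respect the grading by $i$ — this is where I invoke that $j^{!}$ lowers Chow degree by exactly $1$, matching the shift $X_l\rightsquigarrow X_{l+1}$ — and that the resulting comparison map is $\xi$.

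The main obstacle is step (1) in the borderline degree $i=d$ and below, i.e.\ making sure the restriction $A_{*}(X_{l+1})\to A_{*}(X_{l+1}\setminus\pi_{X,l+1}^{-1}(p))$ is an isomorphism (not merely a surjection) in precisely the degrees that feed into the inverse limit; the excerpt only asserts surjectivity in general and isomorphism for $i\ge d-l$, so I will need to track the excision sequence $A_{*}(\pi_{X,l+1}^{-1}(p))\to A_{*}(X_{l+1})\to A_{*}(X_{l+1}\setminus\pi_{X,l+1}^{-1}(p))\to 0$ and argue that as $l\to\infty$ the image of the first term is eventually $0$ in each fixed degree because $\pi_{X,l+1}^{-1}(p)\cong X$ has dimension $d$ while its degrees land in $A_{k}$ for $k\le d$, which after composing with $\hat{j}^{!}$ (an isomorphism) contributes only in a range that stabilizes away. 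Once the fixed-degree towers are pinned down, the rest is the formal statement that an inverse limit of products along grading-preserving transition maps is the product of the inverse limits, and that $\xi$ is, by construction via the universal property, the canonical such comparison isomorphism; I would close by remarking that injectivity and surjectivity of $\xi$ follow degreewise, hence $\xi$ is an isomorphism.
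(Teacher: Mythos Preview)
The paper states this proposition without proof, so there is no argument to compare against; your task is simply to supply a correct one, and your three-step plan does exactly that. The decomposition into fixed-$i$ towers $(A_{i+l}(X_l))_l$, the observation that each such tower has isomorphic transition maps once $l\ge d-i$, and the reassembly into a product is the natural argument and is sound.

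Two remarks. First, the ``main obstacle'' you flag is already handled by the discussion preceding the proposition: the text explicitly says that the restriction $A_{i+l+1}(X_{l+1})\to A_{i+l+1}(X_{l+1}\setminus\pi_{X,l+1}^{-1}(p))$ is an isomorphism for $i\ge d-l$, and since $\hat{j}_{X,l}^{!}$ is always an isomorphism, the composite $j_{X,l}^{!}\colon A_{i+l+1}(X_{l+1})\to A_{i+l}(X_l)$ is an isomorphism precisely when $l\ge d-i$. You do not need any further excision analysis; the finitely many unstable steps $l<d-i$ are irrelevant to the inverse limit of a tower that eventually consists of isomorphisms.

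Second, your step (3) deserves one explicit sentence rather than a gesture: since $A_*(X_l)=\bigoplus_{i=-l}^{d}A_{i+l}(X_l)$ is a \emph{finite} direct sum and $j^{!}$ shifts the grading index $i$ by zero (equivalently, Chow degree by $-1$), an element of $\Lim A_*(X_l)$ is exactly a family $(a_l^{(i)})$ with $a_l^{(i)}\in A_{i+l}(X_l)$ compatible in $l$ for each $i$ separately; this identifies $\Lim A_*(X_l)$ with $\prod_{i\le d}\Lim_l A_{i+l}(X_l)$, and unwinding the definition of $\xi$ shows it is this identification followed by the degreewise isomorphisms $\Lim_l A_{i+l}(X_l)\cong A_i^{G}(X)$ just established. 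With that said, your argument is complete.
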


\selectlanguage{english}%

\subsection{Equivariant $K$-theory}

Let $G$ acts on a scheme $X$. Let $Vec_{G}(X)$ (resp. $Coh_{G}(X)$)
be the category of equivariant vector bundle (resp. equivariant coherent
sheaves) on scheme $X$. We will use $K^{G}(X)$ (resp. $G^{G}(X)$)
to denote the grup generated by equivariant vector bundle (resp. coherent
sheaves) modulo short exact sequence. More generally for any exact
category $\mathcal{N}$ we can construct the group $K_{0}\mathcal{N}$
generated by objects of $\mathcal{N}$ modulo short exact sequence.

\selectlanguage{american}%
We will skecth the construction of pushforward map $f_{*}:K^{G}(X)\rightarrow K^{G}(Y)$
in some special cases induced by direct image functor. For more details,
readers should consult chapter 2 of \cite{We:13} or section 7 and
8 of \cite{Qu:72}. Since taking pullback is exact on vector bundles,
the definition of $f^{*}:K^{G}(Y)\rightarrow K^{G}(X)$ is straightforward.

First we need the following Lemma.
\begin{lem}
\label{lem4}Let $\mathcal{N}_{X}$ be a full subcategory of $Coh_{G}(X)$
staisfying the following conditions:

1. $\mathcal{N}_{X}$ contains $Vec_{G}(X)$ 

2. $\mathcal{N}_{X}$ is closed under extension

3. Each objects of $\mathcal{N}_{X}$ has a resolution by a bounded
complex of elements in $Vec_{G}(X)$ 

4. $\mathcal{N}_{X}$ is closed under kernels of surjections. 

Then 

1. $\mathcal{N}_{X}$ is exact and the inclusion $Vec_{G}(X)\subset\mathcal{N}_{X}$
induce the group homomorphism $i:K^{G}(X)\rightarrow K_{0}\left(\mathcal{N}_{X}\right)$
by mapping the class $\left[\PPP\right]_{Vec_{G}(X)}$ of any locally
free sheaf $\PPP$ to its class $\left[\PPP\right]_{\mathcal{N}_{X}}$
in $K_{0}(\mathcal{N}_{x})$ 

2. all resolutions of $\FF$ by equivariant locally free sheaves
\[
\xymatrix{0\ar[r] & \PPP_{n}\ar[r] & \PPP_{n-1}\ar[r] & \ldots\ar[r] & \PPP_{1}\ar[r] & \PPP_{0}\ar[r] & \FF\ar[r] & 0}
\]

define the same element $\chi(\FF):=\sum_{i=0}^{n}\left(-1\right)^{-i}\left[\PPP_{i}\right]$
in $K^{G}(X)$. Furthermore, $\chi$ define a group homomorphism $\chi:K_{0}(\mathcal{N}_{X})\rightarrow K^{G}(X)$
which is the inverse of $i:K^{G}(X)\rightarrow K_{0}(\mathcal{N}_{X})$.
\end{lem}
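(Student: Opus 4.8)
The statement is precisely the $K_{0}$-level instance of Quillen's resolution theorem, applied to the exact category $\mathcal{N}_{X}$ and its full subcategory $Vec_{G}(X)$: conditions 1--4 are exactly the hypotheses of that theorem. The plan is therefore either to invoke it directly (see \cite{Qu:72,We:13}) or, to keep the exposition self-contained, to run the $K_{0}$ argument, which I outline now. For the first assertion, $\mathcal{N}_{X}$ is a full additive subcategory of the abelian category $Coh_{G}(X)$ that is closed under extensions (condition 2), so it inherits the canonical exact structure whose admissible short exact sequences are those $0\to\FF'\to\FF\to\FF''\to 0$ which are exact in $Coh_{G}(X)$ with all three terms in $\mathcal{N}_{X}$. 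By condition 1 we have $Vec_{G}(X)\subseteq\mathcal{N}_{X}$, and since every short exact sequence of equivariant vector bundles is admissible in this sense, the inclusion $Vec_{G}(X)\hookrightarrow\mathcal{N}_{X}$ is an exact functor and hence induces $i\colon K^{G}(X)=K_{0}(Vec_{G}(X))\to K_{0}(\mathcal{N}_{X})$ sending $[\PPP]_{Vec_{G}(X)}$ to $[\PPP]_{\mathcal{N}_{X}}$.

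Assuming for the moment that $\chi(\FF):=\sum_{i}(-1)^{i}[\PPP_{i}]\in K^{G}(X)$ is independent of the chosen bounded vector bundle resolution, the remainder of the second assertion is formal. Given a resolution $0\to\PPP_{n}\to\cdots\to\PPP_{0}\to\FF\to 0$, set $\ZZZ_{0}=\FF$ and let $\ZZZ_{j}$ be its $j$-th syzygy; condition 4 gives $\ZZZ_{j}\in\mathcal{N}_{X}$, each $0\to\ZZZ_{j+1}\to\PPP_{j}\to\ZZZ_{j}\to 0$ is admissible, and telescoping the resulting relations in $K_{0}(\mathcal{N}_{X})$ yields $[\FF]_{\mathcal{N}_{X}}=\sum_{j}(-1)^{j}[\PPP_{j}]_{\mathcal{N}_{X}}$; thus, once $\chi$ is known to descend to a homomorphism, $i\circ\chi=\id$, while $\chi\circ i=\id$ is immediate from the trivial resolution of a vector bundle. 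So everything reduces to (a) well-definedness of $\chi$ on objects, and (b) additivity of $\chi$ on admissible short exact sequences, since (b) is what makes $\chi$ descend to the desired homomorphism $K_{0}(\mathcal{N}_{X})\to K^{G}(X)$.

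For (a) I would argue by induction on the length of a resolution, via comparison of resolutions. The base fact is that the kernel of a surjection of equivariant vector bundles is again a vector bundle (the surjection splits locally), so $[\mathcal{W}]=[\mathcal{W}']+[\mathcal{V}]$ for every short exact sequence $0\to\mathcal{W}'\to\mathcal{W}\to\mathcal{V}\to 0$ in $Vec_{G}(X)$ and, splitting into short exact sequences, every bounded exact complex of equivariant vector bundles has vanishing alternating sum in $K^{G}(X)$. Given two vector bundle resolutions $\PPP_{\bullet}\to\FF$ and $\mathcal{Q}_{\bullet}\to\FF$, I would form the fibre product $\mathcal{R}=\PPP_{0}\times_{\FF}\mathcal{Q}_{0}$, which is an extension of $\PPP_{0}$ by $\ker(\mathcal{Q}_{0}\to\FF)$ and hence lies in $\mathcal{N}_{X}$ by conditions 2 and 4, choose a surjection onto $\mathcal{R}$ from an equivariant vector bundle $\mathcal{S}$ (condition 3), and iterate — the syzygies that occur have strictly shorter resolutions, so the inductive hypothesis applies to them — to build a single bounded vector bundle resolution of $\FF$ carrying surjective chain maps onto both $\PPP_{\bullet}$ and $\mathcal{Q}_{\bullet}$; the kernels of those chain maps are bounded exact complexes of equivariant vector bundles, so the alternating sums of the two resolutions agree. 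For (b) one runs the analogous simultaneous-resolution construction for a short exact sequence $0\to\FF'\to\FF\to\FF''\to 0$, producing compatible vector bundle resolutions of the three terms whose alternating sums are additive. The step I expect to be the real obstacle is exactly this bookkeeping of common (resp. simultaneous) vector bundle resolutions while keeping every object inside $\mathcal{N}_{X}$: it is where condition 3 (which supplies both the needed covers and the finite resolution length feeding the induction) and condition 4 (which keeps all syzygies inside $\mathcal{N}_{X}$) are genuinely used, conditions 1 and 2 serving only to make $\mathcal{N}_{X}$ an exact category in which these constructions are legitimate.
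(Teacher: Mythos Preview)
Your proposal is correct and aligns with the paper's treatment: the paper gives no proof of this lemma at all, merely pointing the reader beforehand to chapter~2 of \cite{We:13} and sections~7--8 of \cite{Qu:72}, which is precisely the ``invoke the resolution theorem'' option you list first. Your self-contained $K_{0}$ argument is more than the paper provides and is a sound outline of the standard proof.
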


\begin{cor}
\label{Cor1}Let $f:X\rightarrow Y$ be a finite $G$-morphism such
that $f_{*}:Vec_{G}(X)\rightarrow Coh_{G}(X)$ factors through a subcatcategory
$\mathcal{N}_{Y}\subset Coh_{G}(Y)$ satisfying all 4 conditions of
lemma \ref{lem4} above. Then there exist a group homomorphism $f_{*}:K^{G}(X)\rightarrow K^{G}(Y)$
such that $f_{*}[\EE]=\chi(f_{*}\EE)$ for any locally free sheaf
$\EE$ on $X$.
\end{cor}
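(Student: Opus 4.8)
The plan is to reduce everything to Lemma \ref{lem4} applied on the target $Y$. First I would observe that the hypothesis gives us a factorization $f_*\colon Vec_G(X)\to \mathcal{N}_Y \hookrightarrow Coh_G(Y)$, and that $f$ being finite means $f_*$ is exact on the category of \emph{all} coherent sheaves (finite morphisms are affine, so $f_*$ has no higher derived functors). In particular the restriction of $f_*$ to $Vec_G(X)$ is exact as a functor into the exact category $\mathcal{N}_Y$ (exactness of the ambient $Coh_G(Y)$ together with $\mathcal{N}_Y$ being closed under kernels of surjections, condition 4, forces short exact sequences in $Vec_G(X)$ to map to short exact sequences in $\mathcal{N}_Y$). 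Hence $f_*$ sends short exact sequences of equivariant vector bundles on $X$ to short exact sequences in $\mathcal{N}_Y$, and therefore descends to a group homomorphism $K^G(X)=K_0(Vec_G(X))\to K_0(\mathcal{N}_Y)$, $[\EE]\mapsto [f_*\EE]_{\mathcal{N}_Y}$.

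Next I would invoke Lemma \ref{lem4} for the subcategory $\mathcal{N}_Y\subset Coh_G(Y)$, which by assumption satisfies all four conditions: this yields the homomorphism $\chi\colon K_0(\mathcal{N}_Y)\to K^G(Y)$ that is inverse to the natural map $i\colon K^G(Y)\to K_0(\mathcal{N}_Y)$, and which is computed on a class $[\FF]_{\mathcal{N}_Y}$ by taking any bounded equivariant locally free resolution $0\to \PPP_n\to\cdots\to\PPP_0\to\FF\to 0$ and forming $\sum_i(-1)^i[\PPP_i]\in K^G(Y)$. I then define $f_*\colon K^G(X)\to K^G(Y)$ as the composite
\[
K^G(X)\longrightarrow K_0(\mathcal{N}_Y)\xrightarrow{\ \chi\ } K^G(Y).
\]
By construction this is a group homomorphism, and for a locally free sheaf $\EE$ on $X$ it sends $[\EE]$ to $\chi([f_*\EE]_{\mathcal{N}_Y})$, which is exactly the element of $K^G(Y)$ that the statement denotes $\chi(f_*\EE)$ — the class obtained from any bounded equivariant locally free resolution of $f_*\EE$ on $Y$, which exists by condition 3.

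The main point requiring care — and the only genuine obstacle — is checking that $f_*$ restricted to $Vec_G(X)$ really is an exact functor into $\mathcal{N}_Y$, i.e.\ that a short exact sequence $0\to\EE'\to\EE\to\EE''\to0$ of equivariant vector bundles on $X$ stays exact after applying $f_*$ \emph{and} that all three terms, together with the relevant kernels, actually lie in $\mathcal{N}_Y$. Exactness is automatic since $f$ is finite (hence affine), so $R^1f_*\EE'=0$; membership in $\mathcal{N}_Y$ of $f_*\EE',f_*\EE,f_*\EE''$ is the factorization hypothesis, and closure under the kernel (condition 4) handles any intermediate objects, so the argument closes. Everything else is formal: functoriality of $\chi$ (part 2 of Lemma \ref{lem4}) gives that the composite is a homomorphism, and the formula $f_*[\EE]=\chi(f_*\EE)$ is immediate from unwinding the two maps.
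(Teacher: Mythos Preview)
Your proof is correct and matches the paper's intended approach: the paper states this as a corollary of Lemma~\ref{lem4} without proof, and the argument you give---passing through $K_0(\mathcal{N}_Y)$ via the exact functor $f_*$ and then applying the inverse $\chi$ from Lemma~\ref{lem4}---is exactly the implicit reasoning. One small remark: your invocation of condition~4 to ``handle intermediate objects'' is unnecessary, since in a short exact sequence $0\to f_*\EE'\to f_*\EE\to f_*\EE''\to 0$ all three terms already lie in $\mathcal{N}_Y$ by the factorization hypothesis and there is nothing else to check; the sequence is admissible in $\mathcal{N}_Y$ because $\mathcal{N}_Y$ is a full extension-closed subcategory with the inherited exact structure.
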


\begin{prop}
[Projection Formula] \label{prop:=00005BProjection-Formula=00005D}Let
$f:X\rightarrow Y$ be a morphism satisfying the condition in corollary
\ref{Cor1} or the projection $\varphi:\PP_{Y}(V)\rightarrow Y$ where
$V$ is a $G$-equivariant vector bundle. Then for any $x\in K^{G}(X)$
and $y\in K^{G}(Y)$ we have 
\[
f_{*}\left(x.f^{*}y\right)=\left(f_{*}x\right).y\in K^{G}(Y).
\]
\end{prop}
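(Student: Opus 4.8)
The plan is to reduce the statement to the two cases that have been singled out and, in each, to verify the identity on generators. Since $K^G(X)$ is generated by classes $[\EE]$ of equivariant locally free sheaves and $K^G(Y)$ by classes $[\FF]$ of equivariant vector bundles (or, using Lemma \ref{lem4}, by classes of coherent sheaves admitting bounded locally free resolutions), bilinearity of both sides in $x$ and $y$ lets me assume $x=[\EE]$ and $y=[\FF]$. Then $x\cdot f^{*}y=[\EE\otimes f^{*}\FF]$, which is again locally free, so $f_{*}(x\cdot f^{*}y)=\chi\bigl(f_{*}(\EE\otimes f^{*}\FF)\bigr)$ by Corollary \ref{Cor1} (in the finite case) or by the definition of $\varphi_{*}$ via a chosen resolution (in the projective-bundle case). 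The content is therefore the sheaf-level projection formula $f_{*}(\EE\otimes f^{*}\FF)\cong (f_{*}\EE)\otimes\FF$ together with compatibility of this isomorphism with the passage through resolutions.

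First I would treat the finite morphism case. Here $f$ is affine, so $f_{*}$ is exact on $\mathrm{Coh}_G$, and the projection formula isomorphism $f_{*}(\EE\otimes f^{*}\FF)\cong (f_{*}\EE)\otimes\FF$ holds at the level of sheaves (it is local on $Y$ and there it is the statement that $(M\otimes_{A}B)\otimes_{B}N\cong M\otimes_{A}N$ for $A\to B$ with $N$ an $A$-module, and it is $G$-equivariant by naturality). Since $f_{*}\EE$ lies in $\mathcal{N}_Y$, tensoring a bounded locally free resolution $\PPP_{\bullet}\to f_{*}\EE$ with the vector bundle $\FF$ gives a bounded locally free resolution of $(f_{*}\EE)\otimes\FF$, and hence $\chi\bigl((f_{*}\EE)\otimes\FF\bigr)=\sum(-1)^{i}[\PPP_i\otimes\FF]=\bigl(\sum(-1)^{i}[\PPP_i]\bigr)\cdot[\FF]=(f_{*}x)\cdot y$. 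Combining with the sheaf isomorphism of the previous sentence yields $f_{*}(x\cdot f^{*}y)=(f_{*}x)\cdot y$. To be careful I should also reduce $y$ from a general class (a coherent sheaf with locally free resolution) to a genuine vector bundle, which again is an exactness-of-$f_{*}$ and tensor-with-a-resolution argument.

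For the projective bundle $\varphi:\PP_Y(V)\to Y$, I would use that $R^{\bullet}\varphi_{*}$ is computed compatibly with base change and satisfies the derived projection formula $R\varphi_{*}(\EE\otimes\LL\varphi^{*}\FF)\cong R\varphi_{*}(\EE)\otimes^{\LLL}\FF$; since $\FF$ is locally free this is an honest tensor product, and since $R\varphi_{*}$ of a coherent sheaf on $\PP_Y(V)$ is a bounded complex with coherent cohomology admitting locally free resolutions, the alternating sum defining $\varphi_{*}$ in $K$-theory is well defined and additive. Passing to alternating sums of the $R^{i}\varphi_{*}$ and tensoring with $[\FF]$ then gives the claimed identity; equivalently one can invoke the standard computation of $K^G(\PP_Y(V))$ as a free $K^G(Y)$-module on powers of $\OO(1)$, on which both sides are $K^G(Y)$-linear and agree on the generators $[\OO(j)]$ because $\varphi_{*}(\OO(j)\otimes\varphi^{*}\FF)=\varphi_{*}(\OO(j))\otimes\FF$ follows from the projection formula for the structure sheaves.

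The main obstacle I anticipate is not any single deep fact but the bookkeeping needed to make $\varphi_{*}$ well defined and additive in the projective-bundle case: one must check that the alternating sum $\sum(-1)^i[R^i\varphi_{*}\EE]$ is independent of choices and respects short exact sequences (via the long exact sequence of higher direct images, using that $R^i\varphi_{*}$ vanishes for $i$ large), and that it is computed compatibly enough with $\otimes\FF$ to push the sheaf-level projection formula through. Once that framework is in place — and much of it is implicit in the references \cite{We:13,Qu:72} cited before Lemma \ref{lem4} — the identity follows formally by reduction to generators as above. I would therefore organize the write-up as: (i) reduce to $x=[\EE]$, $y=[\FF]$ by bilinearity; (ii) prove the sheaf/complex-level projection formula in each of the two cases; (iii) tensor a locally free resolution with $\FF$ to conclude.
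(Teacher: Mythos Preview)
The paper does not actually prove this proposition: it is stated without proof, as part of the background on equivariant $K$-theory for which the reader has already been referred to \cite{We:13} and \cite{Qu:72}. Your proposal is a correct and standard way to supply the argument --- reduce by bilinearity to classes of locally free sheaves, invoke the sheaf-level projection formula $f_{*}(\EE\otimes f^{*}\FF)\cong f_{*}\EE\otimes\FF$, and then push this through the resolution machinery of Lemma \ref{lem4} (finite case) or through the higher direct images and the structure of $K^{G}(\PP_{Y}(V))$ (projective-bundle case). The bookkeeping you flag about well-definedness and additivity of $\varphi_{*}$ is exactly what the cited references handle, so there is nothing missing from your outline beyond what the paper itself defers to those sources.
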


\begin{prop}
[Base change formula]\label{prop:BaseChange}
\end{prop}

\selectlanguage{english}%
\begin{enumerate}
\item \textit{Consider the following cartesian diagram
\[
\xymatrix{\bar{X}\ar[r]^{\bar{g}}\ar[d]_{\bar{f}} & X\ar[d]^{f}\\
\bar{Y}\ar[r]_{g} & X
}
\]
such that $f$ and $f'$ are $G$-regular embeddings of codimension
$r$. Then $g^{*}\circ f_{*}=\bar{f}_{*}\circ\bar{g}^{*}:K^{G}(X)\rightarrow K^{G}(\bar{Y})$}
\selectlanguage{american}%
\item \textit{Let $A$ be a smooth projective variety and let $p:A\times Y\rightarrow Y$
be the projection to the second factor. Let $g:\bar{Y}\rightarrow Y$
be any morphism and consider the following cartesian diagram 
\[
\xymatrix{A\times\bar{Y}\ar[d]_{\bar{p}}\ar[r]^{\bar{g}} & A\times Y\ar[d]^{p}\\
\bar{Y}\ar[r]_{g} & Y
}
.
\]
Then the pushforward maps $p_{*}:K^{G}(A\times Y)\rightarrow K^{G}\left(Y\right)$
and $\bar{p}_{*}:K^{G}(A\times\bar{Y})\rightarrow K^{G}\left(\bar{Y}\right)$
are well defined and $\bar{p}_{*}\circ\bar{g}^{*}=g^{*}\circ p_{*}:K^{G}(A\times Y)\rightarrow K^{G}\left(\bar{Y}\right)$.
Let $d:D\rightarrow A\times Y$ be a $G$-closed embedding such that
$D$ is flat over $Y$ and let $d':D'\rightarrow A\times Y'$ be the
corresponding pullback so that we have the following cartesian diagram
\[
\xymatrix{\bar{D}\ar[r]^{\hat{g}}\ar[d]_{\bar{d}} & D\ar[d]^{d}\\
A\times\bar{Y}\ar[r]_{\bar{g}} & A\times Y.
}
\]
Then $\bar{g}^{*}\left[\OO_{D}\right]=\left[\OO_{\bar{D}}\right]\in K^{G}(A\times\bar{Y})$.}
\end{enumerate}
\selectlanguage{american}%
Let $i:X\rightarrow Y$ be a $G$-equivariant closed embdedding and
let $U=Y\setminus X$ with open embedding $j:U\rightarrow Y$. Then
there exist group homomorphism $i_{*}:G^{G}(X)\rightarrow G^{G}(Y)$
and $j^{*}:G^{G}(Y)\rightarrow G^{G}(U)$. These two homomorphism
is related as follows:
\begin{lem}
\label{lem:support1}The following complex of abelian groups is exact
\[
\xymatrix{G^{G}(X)\ar[r]^{i_{*}} & G^{G}(Y)\ar[r]^{j^{*}} & G^{G}(U)\ar[r] & 0}
.
\]
\end{lem}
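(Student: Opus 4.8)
The plan is to deduce the sequence from the $K_{0}$-level localization theorem for the abelian category $Coh_{G}(Y)$, together with a dévissage by powers of the ideal of $X$. First I would introduce the full subcategory $\mathcal{M}\subset Coh_{G}(Y)$ of $G$-equivariant coherent sheaves whose set-theoretic support is contained in $X$. Since $\supp$ is stable under passing to subsheaves, quotients and extensions, $\mathcal{M}$ is a Serre subcategory, so the quotient abelian category $Coh_{G}(Y)/\mathcal{M}$ exists. The next step is to check that the exact functor $j^{*}\colon Coh_{G}(Y)\to Coh_{G}(U)$ identifies $Coh_{G}(Y)/\mathcal{M}$ with $Coh_{G}(U)$: it annihilates exactly the objects of $\mathcal{M}$ (a coherent sheaf restricts to $0$ on $U$ iff it is supported on $X$), it is fully faithful modulo $\mathcal{M}$ by the standard description of morphisms in a Serre quotient, and it is essentially surjective because any $\FF\in Coh_{G}(U)$ extends to a $G$-equivariant coherent sheaf on $Y$ --- one takes the quasi-coherent equivariant sheaf $j_{*}\FF$ and chooses a $G$-equivariant coherent subsheaf restricting to all of $\FF$, using that on a noetherian $G$-scheme every equivariant quasi-coherent sheaf is the filtered union of its equivariant coherent subsheaves.

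Granting this, I would invoke the elementary fact that for any abelian category $\mathcal{A}$ with Serre subcategory $\mathcal{B}$ the sequence $K_{0}(\mathcal{B})\to K_{0}(\mathcal{A})\to K_{0}(\mathcal{A}/\mathcal{B})\to 0$ is exact (the surjectivity is clear, and the kernel of the right-hand map is generated by the classes of objects of $\mathcal{B}$). Taking $\mathcal{A}=Coh_{G}(Y)$ and $\mathcal{B}=\mathcal{M}$ and using the equivalence of the previous step gives exactness of
\[
K_{0}(\mathcal{M})\xrightarrow{\ q\ }G^{G}(Y)\xrightarrow{\ j^{*}\ }G^{G}(U)\to 0,
\]
where $q$ is induced by the inclusion. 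It then remains to show $\operatorname{im}(q)=\operatorname{im}\bigl(i_{*}\colon G^{G}(X)\to G^{G}(Y)\bigr)$. The inclusion $\operatorname{im}(i_{*})\subseteq\operatorname{im}(q)$ is clear because $i_{*}\GG$ is supported on $X$ for every $\GG\in Coh_{G}(X)$. For the reverse inclusion, let $\FF\in\mathcal{M}$ and let $\mathcal{I}\subset\OO_{Y}$ be the ideal sheaf of $X$, which is $G$-equivariant since $X$ is $G$-stable; noetherianity together with the support hypothesis gives $\mathcal{I}^{n}\FF=0$ for some $n$, and the finite $G$-stable filtration $\FF\supseteq\mathcal{I}\FF\supseteq\cdots\supseteq\mathcal{I}^{n}\FF=0$ has successive quotients $\mathcal{I}^{k}\FF/\mathcal{I}^{k+1}\FF$ that are $G$-equivariant coherent $\OO_{X}$-modules, hence of the form $i_{*}\GG_{k}$. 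Therefore $[\FF]=\sum_{k}i_{*}[\GG_{k}]$ in $G^{G}(Y)$, which proves $\operatorname{im}(q)\subseteq\operatorname{im}(i_{*})$. Combining the two inclusions with the exact sequence above yields $\ker(j^{*})=\operatorname{im}(i_{*})$ and the surjectivity of $j^{*}$, which is precisely the assertion.

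The step I expect to be the real obstacle is the equivariant extension statement used for essential surjectivity: the classical argument that a coherent sheaf on $U$ extends to $Y$ must be carried out $G$-equivariantly, which is where one uses that $G$ is a (linear, in our applications even linearly reductive) algebraic group acting on a noetherian scheme, so that equivariant quasi-coherent sheaves are filtered colimits of their equivariant coherent subsheaves. The remaining ingredients --- the $K_{0}$-exactness of a Serre quotient and the dévissage filtration by powers of the ideal of $X$ --- are routine, and the dévissage step only uses the abelian structure of $Coh_{G}(Y)$ already implicit in the definition of $G^{G}$.
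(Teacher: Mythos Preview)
Your argument is correct and self-contained: the combination of the Serre-quotient localization sequence for $K_{0}$ with the d\'evissage by powers of the ideal of $X$ is exactly how one proves this result. The paper, however, does not reprove it at all; it simply invokes Thomason's localization theorem (Theorem~2.7 of \cite{Th:83}), which is precisely the statement in question for equivariant $G$-theory. So the difference is that you have unpacked the content of Thomason's theorem, whereas the paper treats it as a black box. The step you rightly flag as the nontrivial one---the existence of a $G$-equivariant coherent extension of a sheaf on $U$ to $Y$---is exactly the point that requires Thomason's work (or the equivariant noetherian approximation arguments you allude to), so in the end both routes rest on the same ingredient.
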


\begin{proof}
This is Theorem 2.7 of \cite{Th:83}.
\end{proof}
We call a class $\beta\in G^{G}(Y)$ is supported on $X$ if $\beta$
is in the image of $i_{*}$. Equivalently $\beta$ is supported on
$X$ if $j^{*}\beta=0$.

Let $Coh_{G}^{X}(Y)$ be the abelian group of coherent sheaves supported
on $X$. Note that $\FF\in Coh_{G}^{X}(Y)$ is not necessarily an
$\OO_{X}$-module. Let $G_{X}^{G}(Y)$ be the corresponding Grothendieck
group. The pushforward functor $i_{*}:Coh_{G}(X)\rightarrow Coh_{G}(Y)$
factors through $Coh_{G}^{X}(Y)$ so that there exist a group homomorphism
$\bar{i}:G^{G}(X)\rightarrow G_{X}^{G}(Y)$, $[\FF]\mapsto[i_{*}\FF]$.
There exist an inverse of $\bar{i}$ described as follows.

Let $\FF\in Coh_{G}^{X}(Y)$ and let $\III$ be the ideal of $X.$
Then there exist positive integer $n$ such that $\III^{n}\FF=0$
so that we have a filtration 
\[
\FF\supseteq\III\FF\supseteq\III^{2}\FF\supseteq\ldots\supseteq\III^{n-1}\FF\supseteq\III^{n}\FF=0.
\]
Note that each $\III^{r}\FF/\III^{r+1}\FF$ is an $\OO_{X}$-module.
One can show that $[\FF]\mapsto\sum_{r=0}^{n-1}[\III^{r}\FF/\III^{r+1}\FF]$
defines a group homomorphism $\bar{i}^{-1}:G_{X}^{G}(Y)\rightarrow G^{G}(X)$. 
\begin{lem}
\label{lem:support}$\bar{i}:G^{G}(X)\rightarrow G_{X}^{G}(Y)$ is
an isomorphism.
\end{lem}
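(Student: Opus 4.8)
The plan is to construct an explicit two-sided inverse to $\bar{i}$, namely the map $\bar{i}^{-1}\colon G_X^G(Y)\to G^G(X)$ already described in the excerpt by the formula $[\FF]\mapsto\sum_{r=0}^{n-1}[\III^r\FF/\III^{r+1}\FF]$, and to verify that the two composites are the identity. First I would confirm that $\bar{i}^{-1}$ is a well-defined group homomorphism: one must check that the assignment is independent of the choice of $n$ (if $\III^n\FF=0$ then also $\III^{n+1}\FF=0$, and the extra graded piece $\III^n\FF/\III^{n+1}\FF=0$ contributes nothing), and that it is additive on short exact sequences $0\to\FF'\to\FF\to\FF''\to0$ in $Coh_G^X(Y)$. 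For additivity the key observation is that $\III$-adic filtration is not exact on the nose, but the associated graded pieces still satisfy $\sum_r[\III^r\FF/\III^{r+1}\FF]=\sum_r[\III^r\FF'/\III^{r+1}\FF']+\sum_r[\III^r\FF''/\III^{r+1}\FF'']$ in $G^G(X)$: this follows from the snake lemma applied to the filtration, since each intermediate kernel/cokernel is itself an $\OO_X$-module and the alternating sums telescope. This is the step I expect to require the most care.

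Next I would check $\bar{i}^{-1}\circ\bar{i}=\id_{G^G(X)}$. For $\FF\in Coh_G(X)$, viewed through $i_*$ as a sheaf on $Y$ annihilated by $\III$, we have $\III\cdot i_*\FF=0$, so $n=1$ works and the filtration has a single graded piece $i_*\FF/\III i_*\FF=i_*\FF$, whose preimage under $\bar{i}$ is $[\FF]$. Hence $\bar{i}^{-1}(\bar{i}[\FF])=[\FF]$ on generators, and since both maps are homomorphisms this suffices.

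Finally I would check $\bar{i}\circ\bar{i}^{-1}=\id_{G_X^G(Y)}$. Given $\FF\in Coh_G^X(Y)$ with filtration $\FF\supseteq\III\FF\supseteq\cdots\supseteq\III^n\FF=0$, each successive quotient $\III^r\FF/\III^{r+1}\FF$ is an $\OO_X$-module, hence lies in the image of $i_*$, and in $G_X^G(Y)$ the class $[\FF]$ equals $\sum_{r=0}^{n-1}[\III^r\FF/\III^{r+1}\FF]$ because the filtration exhibits $[\FF]$ as the sum of its graded pieces (each inclusion $\III^{r+1}\FF\hookrightarrow\III^r\FF$ gives a short exact sequence in $Coh_G^X(Y)$, so the classes add). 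Applying $\bar{i}$ to $\bar{i}^{-1}[\FF]=\sum_r[\III^r\FF/\III^{r+1}\FF]$ (interpreted in $G^G(X)$) and pushing forward recovers exactly $\sum_r[\III^r\FF/\III^{r+1}\FF]=[\FF]$ in $G_X^G(Y)$. Thus $\bar{i}$ is an isomorphism. The only genuinely nontrivial input is the well-definedness argument for $\bar{i}^{-1}$ in the first paragraph; everything else is a formal manipulation of filtrations in the Grothendieck group, using repeatedly that a finite filtration of an object equals the sum of its graded pieces in $K$-theory.
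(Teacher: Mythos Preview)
Your proposal is correct and follows precisely the approach the paper itself outlines: the paragraph preceding the lemma already defines the candidate inverse $\bar{i}^{-1}$ via the $\III$-adic filtration and asserts (``One can show that\ldots'') that it is a group homomorphism, after which the lemma is stated without proof. You are simply filling in the verification the paper omits, and your three steps (well-definedness of $\bar{i}^{-1}$, then the two composites) are the standard ones.

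One small sharpening for the additivity step you flagged as delicate: rather than comparing the $\III$-adic filtrations of $\FF'$, $\FF$, $\FF''$ directly, it is cleaner to observe that \emph{any} finite $G$-equivariant filtration of an object of $Coh_G^X(Y)$ whose successive quotients are $\OO_X$-modules yields the same sum of graded pieces in $G^G(X)$, since any two such filtrations admit a common refinement. Then for a short exact sequence you filter $\FF$ by the subsheaves $\FF'\cap\III^r\FF$ together with the $\III^r\FF$, and the claim follows at once. This avoids the snake-lemma bookkeeping you anticipated.
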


\selectlanguage{english}%
Given a cartesian diagram 
\[
\xymatrix{\bar{X}\ar[r]^{\bar{f}}\ar[d]_{\bar{i}} & \bar{Y}\ar[d]^{i}\\
X\ar[r]_{f} & Y
}
\]
with $i$, $f$ are closed embeddings and a coherent sheaf $\EE$
on $X$ such that $f_{*}\EE$ has a finite resolution by a complex
of locally free sheaves. Then we can define a group homomorphism $f^{[\EE]}:G^{G}(\bar{Y})\rightarrow G^{G}(\bar{X}),$
described as follows. Let $\FF$ be a coherent sheaf on $Y$ supported
on $\bar{Y}$. For each $y\in Y$ , the stalk of $\tor_{Y}^{i}(f_{*}\EE,\FF)$
on $y$ is $\tor_{\OO_{Y,y}}^{i}\left(\left(f_{*}\EE\right)_{y},\FF_{y}\right)$
so that $\tor_{y}^{i}\left(f_{*}\EE,\FF\right)$ is supported on $\bar{X}$.
For any exact sequence 
\[
\xymatrix@C=8pt{0\ar[r] & \FF'\ar[r] & \FF\ar[r] & \FF"\ar[r] & 0}
\]
of coherent sheaves on $\bar{Y}$ we have a long exact sequence
\[
\xymatrix@C=8pt{\tor_{Y}^{i+1}(f_{*}\EE,\FF")\ar[r] & \tor_{Y}^{i}(f_{*}\EE,\FF')\ar[r] & \tor_{Y}^{i}(f_{*}\EE,\FF)\ar[r] & \tor_{Y}^{i}(f_{*}\EE,\FF')\ar[r] & \,}
\]
so that 
\[
\sum_{i\geq0}(-1)^{i}[\tor_{Y}^{i}(f_{*}\EE,\FF)]=\sum_{i\geq0}(-1)^{i}[\tor_{Y}^{i}(f_{*}\EE,\FF')+\sum_{i\geq0}(-1)^{i}[\tor_{Y}^{i}(f_{*}\EE,\FF")]\in G_{\bar{X}}^{G}(Y).
\]
Thus there exist a group homomorphism $\bar{f}^{[\EE]}:G^{G}(\bar{Y})\rightarrow G_{\bar{X}}^{G}(Y)$.
By Lemma \ref{lem:support}, we can define $f^{[\EE]}$ as the composition
$\bar{i}^{-1}\circ\bar{f}^{[\EE]}$.

\begin{lem}
\label{lem:support0}Let $f:X\rightarrow Y$ be a closed embedding
and a coherent sheaf $\EE$ on $X$ such that $f_{*}\EE$ has a finite
resolution by locally free sheaves. For any closed embedding $i:\bar{Y}\rightarrow Y$,
there exist a group homomorphism $f^{[\EE]}:G^{G}(\bar{Y})\rightarrow G^{G}(\bar{Y}\cap X)$
that maps $[\FF]$ to $\sum_{i=0}(-1)^{-1}\left[\tor_{Y}^{i}(f_{*}\EE,\FF)\right]_{\bar{Y}\cap X}$.
Furthermore, $i_{*}f^{[\EE]}(\left[\FF\right])=\sum_{i=0}(-1)^{-1}\left[\tor_{Y}^{i}(f_{*}\EE,\FF)\right]_{Y}$.
\end{lem}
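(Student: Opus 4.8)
The plan is to observe that the first assertion of the lemma is exactly the construction of $\bar f^{[\EE]}$ carried out in the paragraph immediately preceding it, reorganized around the closed subscheme $\bar X:=\bar Y\cap X$, and then to deduce the ``furthermore'' clause from a factorization of the pushforward along $\bar X\hookrightarrow Y$. Throughout I would identify $G^G(\bar Y)$ with $G^G_{\bar Y}(Y)$ via Lemma \ref{lem:support}, so that a class $[\FF]\in G^G(\bar Y)$ is represented by a $G$-equivariant coherent sheaf $\FF$ on $Y$ supported on $\bar Y$, and I would form the cartesian square with induced closed embeddings $\bar f:\bar X\to\bar Y$ and $\bar i:\bar X\to X$.

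First I would check that each $\tor_Y^i(f_*\EE,\FF)$ is a $G$-equivariant coherent sheaf supported on $\bar X$ and vanishes for $i$ large. Choosing a finite $G$-equivariant locally free resolution $0\to\PPP_n\to\cdots\to\PPP_0\to f_*\EE\to 0$ supplied by the hypothesis, $\tor_Y^i(f_*\EE,\FF)$ is the cohomology of the complex $\PPP_\bullet\otimes\FF$ of $G$-equivariant coherent sheaves, hence is $G$-equivariant coherent and vanishes for $i>n$. For the support, at $y\notin X$ the stalk $(f_*\EE)_y$ is zero and at $y\notin\bar Y$ the stalk $\FF_y$ is zero, so in either case $\tor_{\OO_{Y,y}}^i\bigl((f_*\EE)_y,\FF_y\bigr)=0$; thus $\supp\tor_Y^i(f_*\EE,\FF)\subseteq X\cap\bar Y=\bar X$, and each $\tor_Y^i(f_*\EE,\FF)$ defines a class in $G^G_{\bar X}(Y)$.

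Next I would verify that $[\FF]\mapsto\sum_{i\geq 0}(-1)^i\bigl[\tor_Y^i(f_*\EE,\FF)\bigr]\in G^G_{\bar X}(Y)$ is additive on short exact sequences: for $0\to\FF'\to\FF\to\FF''\to 0$ the long exact $\tor_Y^\bullet(f_*\EE,-)$-sequence has all terms in $Coh_G^{\bar X}(Y)$ and breaks into short exact sequences, which is precisely the computation displayed just before the lemma. This gives a homomorphism $\bar f^{[\EE]}:G^G(\bar Y)\to G^G_{\bar X}(Y)$, and composing with the inverse of the isomorphism $\bar i:G^G(\bar X)\xrightarrow{\ \sim\ }G^G_{\bar X}(Y)$ of Lemma \ref{lem:support} yields $f^{[\EE]}:=\bar i^{-1}\circ\bar f^{[\EE]}$, which by construction sends $[\FF]$ to $\sum_{i\geq 0}(-1)^i\bigl[\tor_Y^i(f_*\EE,\FF)\bigr]_{\bar X}$. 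For the pushforward identity I would use that the pushforward $G^G(\bar X)\to G^G(Y)$ along $\bar X\hookrightarrow Y$ (which is what $i_*$ abbreviates in the statement, by functoriality of pushforward along closed embeddings) factors as $G^G(\bar X)\xrightarrow{\bar i}G^G_{\bar X}(Y)\to G^G(Y)$, the second arrow being the forgetful map from Lemma \ref{lem:support1}; this is immediate from the description of $\bar i^{-1}$ via the $\III$-adic filtration by the ideal sheaf $\III$ of $\bar X$, whose graded pieces are the $\OO_{\bar X}$-modules one pushes forward. Applying this factorization to $f^{[\EE]}([\FF])=\bar i^{-1}\bigl(\sum_i(-1)^i[\tor_Y^i(f_*\EE,\FF)]\bigr)$ returns $\sum_i(-1)^i[\tor_Y^i(f_*\EE,\FF)]_Y$ in $G^G(Y)$.

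I do not expect a genuine obstacle, since the essential content was already established in the discussion preceding the statement; the one point needing care is the $G$-equivariance, namely that the finite locally free resolution of $f_*\EE$ may be taken $G$-equivariantly so that all the $\tor$-sheaves and the long exact sequence live in $Coh_G(Y)$ — I would either fold this into the hypothesis on $\EE$ or invoke the standing equivariant resolution property of $Y$. The remaining work is the bookkeeping that every sheaf appearing lies in $Coh_G^{\bar X}(Y)$ and that the alternating sums are finite, both handled above.
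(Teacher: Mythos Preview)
Your proposal is correct and follows essentially the same approach as the paper: the paper's ``proof'' is precisely the paragraph preceding the lemma (stalk computation for the support, long exact sequence for additivity, then composition with the isomorphism $\bar i^{-1}$ of Lemma~\ref{lem:support}), which you reproduce faithfully, and your treatment of the ``furthermore'' clause via the factorization $G^{G}(\bar X)\xrightarrow{\bar i}G^{G}_{\bar X}(Y)\to G^{G}(Y)$ is the natural way to handle what the paper leaves implicit. Your remark that the equivariance of the resolution needs to be folded into the hypothesis (or supplied by an ambient resolution property) is a fair point the paper glosses over.
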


Let $G$ be the torus $T_{1}$ and let $X$ be a $G$-scheme. Recall
that by Proposition \ref{lim-isom} there exist an isomorphism $\xi:\Lim A_{*}(X_{n})\rightarrow\prod_{i=-\infty}^{d}A_{i}^{G}(X)$.
In this section we want to recall some results of the corresponding
$\Lim K(X_{n}).$

\selectlanguage{american}%
From the direct system \ref{direct-system}, we have the inverse system
\[
\xymatrix{\ldots\ar[r] & K(X_{l-1})\ar[l] & K(X_{l})\ar[l]_{j_{X,l-1}^{*}} & K(X_{l+1})\ar[l]_{j_{X,l}^{*}} & \ldots}
\]
We denote the inverse limit of the above inverse system as $\Lim\,K(X_{l})$
and use $\rho_{X,l}$ to denote the canonical morphism $\Lim K(X_{l})\rightarrow K(X_{l})$.
The pullback functor induced from the projection map $pr_{X}:X\times U_{l}\rightarrow X$
and the equivalence between $Vec_{G}(X\times U_{l})$ and $Vec(X_{l})$
induces group homomorphims $\kappa_{X,l}:K^{G}(X)\rightarrow K(X_{l})$.
It's easy to show that $\kappa_{X,l}=j_{X,l}^{*}\circ\kappa_{X,l+1}$
so that we have a uniqe group homomorphism $\kappa_{X}:K^{G}(X)\rightarrow\Lim K(X_{l})$
such that $\kappa_{X,l}=\rho_{X,l}\circ\kappa_{X}$. In this section,
to distinguish bertween the ordinary and the equivariant version of
pullback and pushforward map, we will use superscript $^{G}$ to denote
the equivariant version, for example we will use $f^{G,*}$ to denote
the pullback in the equivariant setting.

There is a canonical way to define pullback map $\overleftarrow{f^{*}}:\Lim K(Y)\rightarrow\Lim K(X)$
and pushforward map $\overleftarrow{f_{*}}:\Lim K(X_{l})\rightarrow\Lim K(Y_{l})$
for $\Lim\,K(X_{l})$.
\selectlanguage{english}%
\begin{lem}
\label{lem:kappa}Let $f:X\rightarrow X$ be a $G$ morphism.

1. If $f:X\rightarrow Y$ is a finite $G$-morphism satisfying the
condition in corrolary \ref{Cor1}. Assume also that for all $l$
, $\left(f\times\id_{U_{l}}\right)$ also satisfies the condition
in corrolary \ref{Cor1} . Then there exist a group homomorphism $\overleftarrow{f_{*}}:\Lim K(X_{l})\rightarrow\Lim K(Y_{l})$
satisfying the identity $\kappa_{Y}\circ f_{*}^{G}=\overleftarrow{f_{*}}\circ\kappa_{X}$.

2. If $f:X\rightarrow Y$ is the structure morphism $\PP_{Y}(V)\rightarrow Y$
where $V$ is a $G$-equivariant vector bundle. Then there exist a
group homomorphism $\overleftarrow{f}_{*}:\Lim K(X_{l})\rightarrow\Lim K(Y_{l})$
satisfying the identity $\kappa_{Y}\circ f_{*}^{G}=\overleftarrow{f_{*}}\circ\kappa_{X}$.

3. If $f:X\rightarrow Y$ is a $G$-morphism that can be factorized
into $p\circ i$ where $i:X\rightarrow Z$ is a finite morphism satisfying
the condition 1. and $p$ satisfies condition 2. then the group homomorphsim
$\overleftarrow{f_{*}}:=\overleftarrow{p_{*}}\circ\overleftarrow{i_{*}}:$$\Lim K(X)\rightarrow\Lim K(Y)$
is independent of the factorization.
\end{lem}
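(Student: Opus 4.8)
The plan is to build $\overleftarrow{f_*}$ level by level and then invoke the universal property of the inverse limit. For each $l$, the $G$-morphism $f\times\id_{U_l}\colon X\times U_l\to Y\times U_l$ descends along the principal bundles $\pi_{X,l}$, $\pi_{Y,l}$ to a morphism $f_l\colon X_l\to Y_l$. In the situation of part~1, $f_l$ is again finite and, by hypothesis, $f\times\id_{U_l}$ still satisfies the conditions of Corollary~\ref{Cor1}, so $f_l$ does as well and we get a pushforward $f_{l,*}\colon K(X_l)\to K(Y_l)$. In the situation of part~2 one has $\PP_Y(V)\times U_l=\PP_{Y\times U_l}(pr_Y^*V)$, the bundle $pr_Y^*V$ is $G$-equivariant and descends to a bundle $V_l$ on $Y_l$ with $X_l=\PP_{Y_l}(V_l)$, so $f_l$ is again a projective bundle projection and has the usual pushforward $f_{l,*}$ from Proposition~\ref{prop:=00005BProjection-Formula=00005D}.

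Next I would check that the $f_{l,*}$ are compatible with the transition maps, i.e. $j_{Y,l}^*\circ f_{l+1,*}=f_{l,*}\circ j_{X,l}^*$. The square with vertices $X_l,X_{l+1},Y_l,Y_{l+1}$ (edges $j_{X,l}$, $f_{l+1}$, $f_l$, $j_{Y,l}$) is cartesian, since $X_l\subset X_{l+1}$ is exactly the Cartier divisor cut out by the last coordinate $x_{l+1}$ of $U_{l+1}$, which lives on the common factor, so $j_{X,l}$ is the pullback of the Cartier divisor $j_{Y,l}$ along $f_{l+1}$. The required commutation is then the base change of the Gysin map $j^!$ of a Cartier divisor against $f_{l+1,*}$: in part~1 it follows because $f_{l+1}$ is finite and $j_{X,l}=f_{l+1}^{-1}(j_{Y,l})$, which can be verified from the explicit $\tor$-description of both operations (cf. Lemma~\ref{lem:support0}); in part~2 it is the standard compatibility of the projective-bundle pushforward with base change, using $V_l=j_{Y,l}^*V_{l+1}$. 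Hence $(f_{l,*})_l$ assembles to $\overleftarrow{f_*}\colon\Lim K(X_l)\to\Lim K(Y_l)$. The identity $\kappa_Y\circ f^G_*=\overleftarrow{f_*}\circ\kappa_X$ is checked level by level: it reduces to $\kappa_{Y,l}\circ f^G_*=f_{l,*}\circ\kappa_{X,l}$, and since $\kappa_{X,l}(x)$ is the descent of $pr_X^*x$ and $f_{l,*}$ of a descended class is the descent of $(f\times\id_{U_l})_*$ of it, this follows from the base change $(f\times\id_{U_l})_*\circ pr_X^*=pr_Y^*\circ f_*$ (flat base change along $pr_Y$, using that in both cases there are no higher direct images, resp. the projective-bundle base change) together with compatibility of descent with pullback and pushforward.

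For part~3, $\overleftarrow{f_*}:=\overleftarrow{p_*}\circ\overleftarrow{i_*}$ is again built level by level out of $p_{l,*}\circ i_{l,*}\colon K(X_l)\to K(Y_l)$, so independence of the factorization reduces to the corresponding statement for the ordinary pushforwards $f_{l,*}$. Given a second factorization $f=p'\circ i'$ with $i'\colon X\to Z'$ finite and $p'\colon Z'\to Y$ a projective bundle projection, I would dominate both by the fibre product $Z\times_Y Z'$: the induced map $(i,i')\colon X\to Z\times_Y Z'$ is finite, as it factors as the section $X\to X\times_Y Z'$ of the projective bundle projection $X\times_Y Z'\to X$ (a closed immersion) followed by the finite map $X\times_Y Z'\to Z\times_Y Z'$ obtained from $i$ by base change, while $Z\times_Y Z'\to Z$ and $Z\times_Y Z'\to Z'$ are projective bundle projections (pullbacks of $p'$ and $p$). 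Functoriality of the ordinary $K$-theoretic pushforward for finite morphisms and for projective bundle projections, together with base change to reconcile a ``finite-then-bundle'' with a ``bundle-then-finite'' ordering through $Z\times_Y Z'$, then yields $p_{l,*}\circ i_{l,*}=p'_{l,*}\circ i'_{l,*}$ at every level, hence $\overleftarrow{p_*}\circ\overleftarrow{i_*}=\overleftarrow{p'_*}\circ\overleftarrow{i'_*}$.

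I expect the main obstacle to be precisely the bookkeeping in these base-change and functoriality verifications at each finite level: one must ensure that the mixed commutations—Gysin pullback against finite pushforward in paragraph~2, and the reordering of a finite map past a projective bundle projection through the fibre product in paragraph~3—hold exactly on the groups $K(X_l)$, since only then do they pass to statements about $\Lim K(X_l)$. The equivariant aspect itself is harmless, because every object in sight is produced by a $G$-equivariant construction that descends along the principal bundles $\pi_{X,l}$, so no genuinely new equivariant input beyond Corollary~\ref{Cor1} and Propositions~\ref{prop:=00005BProjection-Formula=00005D} and~\ref{prop:BaseChange} is required.
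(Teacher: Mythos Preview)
The paper states Lemma~\ref{lem:kappa} without proof, so there is no argument in the text to compare yours against. Your proposal is the natural one and is correct in outline: build $f_{l,*}$ at each finite level by descent along the principal bundles, check compatibility with the Gysin maps $j_{X,l}^{*}$ via base change for the cartesian square cut out by the last coordinate, assemble via the universal property, and verify $\kappa_{Y}\circ f_{*}^{G}=\overleftarrow{f_{*}}\circ\kappa_{X}$ level by level using flat base change along $pr_{Y}$. For part~3 the fibre-product domination argument you sketch is exactly the standard device for proving independence of a projective factorisation (as in SGA~6 or Fulton--Lang), and it goes through here because at each level $l$ everything is an ordinary non-equivariant statement about $K(X_{l})$.

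One small point worth tightening: in part~1 you assert that the hypothesis on $f\times\id_{U_{l}}$ transfers to $f_{l}$. This is true, but the reason is that the equivalence $Vec_{G}(X\times U_{l})\simeq Vec(X_{l})$ (and likewise for coherent sheaves) is exact and compatible with pushforward along $G$-equivariant maps, so the subcategory $\mathcal{N}_{Y\times U_{l}}$ descends to a subcategory of $Coh(Y_{l})$ with the same four properties from Lemma~\ref{lem4}. You gesture at this with ``compatibility of descent with pullback and pushforward'' later, but it is really needed already at the first step. Similarly, in part~3 the claim that $(i,i')\colon X\to Z\times_{Y}Z'$ is finite \emph{and} again satisfies the conditions of Corollary~\ref{Cor1} at every level deserves a word: finiteness is clear from your factorisation, but the resolution condition for the intermediate category has to be inherited from those for $i$ and $i'$, which follows because a regular section of a projective bundle has finite $\tor$-dimension. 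None of this is a gap in the sense of a missing idea; it is just the bookkeeping you yourself flag in the last paragraph.
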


For each equivariant vector bundle $\EE$ on $X$, its pullback $\tilde{\EE}$
to $X\times U_{n}$ correspond to a vector bundle $\EE_{n}$ on $X_{n}$
such that $\pi^{*}\EE_{n}=\tilde{\EE}$. By the identification $A_{j}^{G}(X)=A_{j+n}(X_{n})$,
$c_{G}^{i}(\EE):A_{j}^{G}(X)\rightarrow A_{j-i}^{G}(X)$ is given
by $c^{i}(\EE_{n}):A_{j+n}(X_{n})\rightarrow A_{j-i+n}(X_{n})$. Since
Chern class commutes with pullback this definition is well defined.
Furthermore, $c_{G}^{j}(\EE)$ is an element of $A_{G}^{i}(X)$.

In the non equivariant case, each vector bundle $\EE$ of rank $r$
has Chern roots $x_{1},\ldots,x_{r}$ such that $c^{i}(\EE)=e_{i}(x_{1},\ldots,x_{r})$
where $e_{i}$ is the $i^{\text{th}}$ symmetric polynomial. Furthermore,
its Chern character is defined as $ch(\EE)=\sum_{i=1}^{r}e^{x_{i}}$.
From this definition, we have the following formula of Chern chararacter
in terms of Chern classes
\begin{align*}
ch(\EE) & =r+c^{1}(\EE)+\frac{1}{2}\left(c^{1}(\EE)^{2}-2c^{2}(\EE)\right)+..\\
 & =\sum_{i=0}^{\infty}P_{j}(c^{1}(\EE),\ldots,c^{i}(\EE))
\end{align*}
 where $P_{j}\left(c^{1}(\EE),\ldots,c^{j}(\EE)\right)$ is a polynomial
of order $j$ with $c^{i}(\EE)$ has weight $i$.

In \cite{EG:00}, Edidin and Graham define an equivariant Chern character
map $ch^{G}:K^{G}(X)\rightarrow\prod_{i=0}^{\infty}A_{G}^{i}(X)$
by the following formula 
\[
ch^{G}\left(\EE\right)=\sum_{i=0}^{\infty}P_{i}(c_{G}^{1}(\EE),\ldots,c_{G}^{i}(\EE)).
\]
One can show that $ch^{G}$ is a ring homomorphism. Let $\overleftarrow{ch}:K^{G}(X)\rightarrow\Lim A^{*}(X_{n})$
denote the composition $\alpha\circ ch^{G}$. 

For each $n$ there is a Chern character map $ch_{n}:K(X_{n})\rightarrow A^{*}(X_{n})$
which commutes with refined Gysin homomorphisms. By the universal
property of inverse limits we have a ring homomorphism $\widehat{ch}:\Lim K(X_{n})\rightarrow\Lim A^{*}(X_{n})$.
Since each $ch_{n}$ is a ring homomorphis, $\widehat{ch}$ is also
a ring homomorphism. Furthermore the following diagram commutes\begin{equation}
\begin{tikzcd}
K^G(X)\ar[r,"ch^G"]\ar[d,"\kappa"']&\displaystyle{\prod_{i=0}^{\infty}}A^i _G(X)\ar[d,"\alpha"']\\
\Lim K(X_n)\ar[r,"\widehat{ch}"']&\Lim A^*(X_n)
\end{tikzcd}
\end{equation}
\begin{lem}
\label{chern}For all $x\in\Lim A_{*}(X_{n})$ and for any $\beta\in K^{G}(X)$
we have $\xi\left(\overleftarrow{ch}(\beta)(x)\right)$$=ch^{G}(\beta)(\xi x)$.
\end{lem}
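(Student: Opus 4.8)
The plan is to verify the asserted identity one homogeneous component at a time, transporting everything to a fixed approximation space $X_{n}$ and reducing to the ordinary cap product there. Fix $\beta\in K^{G}(X)$ and $x\in\Lim A_{*}(X_{n})$, and fix an index $i\leq d$; it is enough to compare the components of $\xi\bigl(\overleftarrow{ch}(\beta)(x)\bigr)$ and of $ch^{G}(\beta)(\xi x)$ lying in $A_{i}^{G}(X)$. Choose any $n\geq d-i$, so that $A_{k}^{G}(X)=A_{k+n}(X_{n})$ for all $k\in\{i,i+1,\ldots,d\}$, and set $y:=\lambda_{n}(x)\in A_{*}(X_{n})$. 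Since $\dim X_{n}=d+n$, the graded pieces $y_{m}\in A_{m}(X_{n})$ vanish for $m>d+n$ and $A^{j}(X_{n})=0$ for $j>d+n$; likewise $(\xi x)_{k}=0$ for $k>d$, which is what makes both of the actions below well defined on these a priori infinite products.

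First I would unwind the left-hand side. By the construction of $\xi$ its $A_{i}^{G}(X)$-component is the summand in $A_{i+n}(X_{n})$ of $\lambda_{n}\bigl(\overleftarrow{ch}(\beta)(x)\bigr)$. The module structure of $\Lim A_{*}(X_{n})$ over $\Lim A^{*}(X_{n})$ is the level-wise cap product, the compatibility with the transition maps being the standard compatibility of the Gysin homomorphisms $j_{X,n}^{!}$ with cap products. Hence $\lambda_{n}\bigl(\overleftarrow{ch}(\beta)(x)\bigr)=r_{n}\bigl(\overleftarrow{ch}(\beta)\bigr)\cap y$, where $r_{n}\colon\Lim A^{*}(X_{n})\to A^{*}(X_{n})$ is the canonical projection and $\cap$ is the cap product on $X_{n}$. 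Using $\overleftarrow{ch}=\alpha\circ ch^{G}$, the commutativity $\widehat{ch}\circ\kappa_{X}=\alpha\circ ch^{G}$, the identity $r_{n}\circ\widehat{ch}=ch_{n}\circ\rho_{X,n}$, and $\rho_{X,n}\circ\kappa_{X}=\kappa_{X,n}$, one gets $r_{n}\bigl(\overleftarrow{ch}(\beta)\bigr)=ch_{n}\bigl(\kappa_{X,n}(\beta)\bigr)$. Writing $ch_{n}\bigl(\kappa_{X,n}(\beta)\bigr)=\sum_{j\geq0}c_{j}$ with $c_{j}\in A^{j}(X_{n})$, the $A_{i+n}(X_{n})$-component of $ch_{n}\bigl(\kappa_{X,n}(\beta)\bigr)\cap y$ is the finite sum $\sum_{j\geq0}c_{j}\cap y_{i+n+j}$.

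Next I would unwind the right-hand side. By definition of the action of $\prod_{j\geq0}A_{G}^{j}(X)$ on $\prod_{k\leq d}A_{k}^{G}(X)$, the $A_{i}^{G}(X)$-component of $ch^{G}(\beta)(\xi x)$ is $\sum_{j\geq0}ch_{j}^{G}(\beta)\cap(\xi x)_{i+j}$. Under $A_{i+j}^{G}(X)=A_{i+j+n}(X_{n})$ the class $(\xi x)_{i+j}$ is precisely $y_{i+j+n}$; under the natural map $A_{G}^{j}(X)\to A^{j}(X_{n})$ the class $ch_{j}^{G}(\beta)$ maps to $c_{j}$, by the same commuting square; and the equivariant cap product $A_{G}^{j}(X)\otimes A_{i+j}^{G}(X)\to A_{i}^{G}(X)$ is, under these identifications, the cap product $A^{j}(X_{n})\otimes A_{i+j+n}(X_{n})\to A_{i+n}(X_{n})$, since the equivariant Chern classes---and hence the equivariant Chern character---are by construction the ones pulled back from level $n$. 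So the $A_{i}^{G}(X)$-component of $ch^{G}(\beta)(\xi x)$ is $\sum_{j\geq0}c_{j}\cap y_{i+j+n}$ as well, and the two sides agree in degree $i$. As $i\leq d$ was arbitrary, and neither side depends on the choice of $n\geq d-i$ (the identifications $A_{k}^{G}(X)=A_{k+n}(X_{n})$ being compatible with the Gysin maps $j_{X,n}^{!}$, with which both $ch_{n}$ and the cap products commute), the identity $\xi\bigl(\overleftarrow{ch}(\beta)(x)\bigr)=ch^{G}(\beta)(\xi x)$ follows.

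The proof is essentially an organized unwinding of definitions, and I expect the only delicate point to be the verification that the level-wise cap products genuinely assemble into a module structure on the two inverse limits, i.e. that $j_{X,n}^{!}(a\cap b)=j_{X,n}^{*}(a)\cap j_{X,n}^{!}(b)$ for the regular embeddings $j_{X,n}$ and all $a\in A^{*}(X_{n+1})$, $b\in A_{*}(X_{n+1})$. Granting that standard fact---together with the facts recalled just above in the text that $ch_{n}$ commutes with refined Gysin homomorphisms and that $ch^{G}$ and the equivariant cap product are the ones induced from level $n$---everything else is bookkeeping, with the finite truncations matching because $\dim X_{n}=d+n$ and $\xi x\in\prod_{k\leq d}A_{k}^{G}(X)$.
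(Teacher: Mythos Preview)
The paper states Lemma~\ref{chern} without proof: after the commuting square relating $ch^{G}$, $\alpha$, $\kappa$ and $\widehat{ch}$, the lemma is asserted and the text moves directly to Section~\ref{sec:Kool-Thomas-invariants}. So there is no argument in the paper to compare yours against.

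On its own merits your proof is correct and is exactly the expected one: both $\xi$ and the two actions are defined levelwise on the approximations $X_{n}$, so the identity reduces, for each fixed degree $i$ and any $n\ge d-i$, to the ordinary cap-product formula $ch_{n}(\kappa_{X,n}(\beta))\cap\lambda_{n}(x)$ on $X_{n}$, and then the equivariant identifications $A^{j}_{G}(X)\to A^{j}(X_{n})$, $A^{G}_{k}(X)=A_{k+n}(X_{n})$ match the two expressions term by term. The only genuinely nontrivial input you flag --- that the levelwise cap products assemble into a module structure on the inverse limit, i.e.\ $j_{X,n}^{!}(a\cap b)=j_{X,n}^{*}(a)\cap j_{X,n}^{!}(b)$ for the regular embeddings $j_{X,n}$ --- is a standard compatibility of refined Gysin maps with cap products (see Fulton, \emph{Intersection Theory}, for instance Example~6.2.1 together with the bivariant formalism of Chapter~17). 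Everything else is, as you say, bookkeeping.
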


\section{Kool-Thomas invariants \label{sec:Kool-Thomas-invariants}}

In this section we will review the definition of Kool-Thomas invariants
and its relation to curve counting.

Let $X$ be a projective smooth varietiy of dimension 3. A pair $\left(\FF,s\right)$
where $\FF$ is a coherent sheaf of dimension $1$ and $s$ is a section
of $\FF$ is called stable if the following two conditions holds:
\begin{enumerate}
\item $\FF$ is pure
\item The cokernel $Q$ of $s$ is of dimension 0.
\end{enumerate}
Let $X$ be a smooth projective 3-fold and let $\chi$ be an interger
and $\beta$ be a class in $H_{2}(X,\ZZ)$, there exists a projective
scheme $\PXB$ parametrizing pairs $\left(\FF,s\right)$ satisfying
the above conditions with scheme theoretic support of $\FF$ is of
class $\beta$ and holomorphic Euler characteristic $\chi(\FF)$ equals
to $\chi$\cite{Po:93}. We wiil use $\PPP$ to denote $\PXB$ whenever
the context is clear. There exist a universal sheaf $\FFF$ and universal
section $\ss:\OO_{\PPP\times X}\rightarrow\FFF$ on the product space
$X\times\PPP.$ We denote by $p$ and $q$ the projection from $\PXB\times X$
to the factor $\PXB$ and $X$ respectively. Note that in general
$\PX$ is singular.

If $G$ acts on $X$ there is a natural $G$ action on $\PPP$. Moreover,
if $G$ acts diagonally on $\PPP\times X$ i.e. $\sigma_{\PPP\times X}:G\times\PPP\times X\rightarrow\PPP\times X$,
$(g,p,x)\mapsto(g.p,g.x)$ , then the universal sheaf $\FFF$ is an
equivariant sheaf and $\ss:\OO_{\PPP\times X}\rightarrow\FFF$ is
an equivariant morphism of sheaves. 

Given a map $\phi:E^{\bullet}\rightarrow\LLL_{Y}$, where $E^{\bullet}=\{E^{-1}\rightarrow E^{0}\}$
is a two term complex of vector bundles and $\LLL_{Y}$ is the cotangent
complex of $Y$, such that the induced map $\phi^{0}$ , $\phi^{-1}$
is isomorphism and epimorphism respectively, Behrend and Fantechi
construct a class $[Y]^{vir}\in A_{\rk E^{0}-\rk E^{-1}}(Y)$ called
virtual fundamental class\cite{BF:97}. The map $\phi$ is called
a perfect obstruction theory of $Y$ and $vd:=\rk E^{0}-\rk E^{-1}$
is the virtual dimension of $Y$. The perfect obstruction theory gives
a virtual class even though the space $Y$ is badly singular.

Let $\II=\{\OO_{\PPP\times X}\rightarrow\FFF\}$ be the universal
complex on $\PPP\times X$. Pandharipande and Thomas have shown that
$Rp_{*}\left(R\hhom\left(\II,\II\right)_{0}\otimes\omega_{X}\right)[2]$
is a two term complex of locally free sheaves and there exist a map
$\phi:Rp_{*}\left(R\hhom\left(\II,\II\right)_{0}\otimes\omega_{X}\right)[2]\rightarrow\LLL_{\PPP}$
satisfying the above conditions. We will use $\EEE^{\bullet}=\{E^{-1}\rightarrow E^{0}\}$
to denote the complex $Rp_{*}\left(R\hhom\left(\II,\II\right)_{0}\otimes\omega_{p}\right)[2]$
on $\PPP$. The virtual dimension of $\PXB$ is then $-\chi(R\hhom\left(I^{\bullet},I^{\bullet}\right)_{0})=\int_{\beta}c_{1}(X)$.
If $X$ is Calabi-Yau the dualizing sheaf $\omega_{X}\simeq\OO_{X}$
so that by Serre duality $vd=\rk E^{0}-\rk E^{-1}=0$. If $vd=0$
then $P_{X,\beta,\chi}:=\int_{\left[\PPP\right]^{vir}}1\in\ZZ$ and
is invariant along a deformation of $X$. $P_{X,\beta,\chi}$ is called
Pandharipande-Thomas invariant or PT-invariant. 

One technique to compute PT-invariants is using the virtual localization
formula by Graber and Pandharipande. If $G=\CS$ acts on $\PXB$ then
$\LLL_{\PXB}$ has a natural equivariant structure. Let $\PPP^{G}$
be the fixed locus of $\PPP$, then $\EEE^{\bullet}$ has a sub-bundle
$\left(\left.\EEE^{\bullet}\right|_{\PPP^{G}}\right)^{fix}$ which
has weight 0 and a sub-bundle $\left(\left.\EEE^{\bullet}\right|_{\PPP^{G}}\right)^{mov}$
with non zero weight such that $\left.\EEE^{\bullet}\right|_{\PPP^{G}}=\left(\left.\EEE^{\bullet}\right|_{\PPP^{G}}\right)^{fix}\oplus\left(\left.\EEE^{\bullet}\right|_{\PPP^{G}}\right)^{mov}$.
Graber and Pandharipande showed that there exists a canonical morphism
$\hat{\phi}:\left(\left.\EEE^{\bullet}\right|_{\PPP^{G}}\right)^{fix}\rightarrow\LLL_{\PPP^{G}}$
that induces a perfect obstruction theory for $\PPP^{G}$. So that
we have the virtual fundamental class $\left[\text{\ensuremath{\PPP}}^{G}\right]^{vir}$
of $\PPP^{G}$. Graber and Pandaripandhe gives a formula that relates
$[\PPP^{G}]^{vir}$ with $\left[\PPP\right]^{vir}$ as follows :
\[
\left[\PPP\right]^{vir}=i_{*}\left(\frac{[\PPP^{G}]^{vir}}{e(N^{vir})}\right)\in A_{*}^{G}\otimes_{\ZZ}\QQ[t,t^{-1}]
\]
 where $e\left(N^{vir}\right)$ is the top Chern class of the vector
bundle $N^{vir}=\left(\left(\left.\EEE^{\bullet}\right|_{\PPP^{G}}\right)^{mov}\right)^{\vee}$
and $t$ is the first Chern class of the equivariant line bundle with
weight $1$.

Let $S$ be a nonsingular projective surface with canonical bundle
$\omega_{S}$ and let $X$ be the total space of $\omega_{S}$ i.e.
$X=Spec\left(\sym^ {}(\omega_{S}^{\vee})\right)$. Then there is a
closed embedding $i$ of $S$ into $X$ as the zero section. Let $\pi:X\rightarrow S$
be the structure morphism. Since $\omega_{X}\simeq\pi^{*}\omega_{S}\otimes\pi^{*}\omega_{S}^{\vee}\simeq\OO_{X}$,
$X$ is Calabi-Yau. Let $\bar{X}=\PP(X\oplus\AA_{S}^{1})$, then $X$
is an open subscheme of $\bar{X}$ and let $j:X\rightarrow\bar{X}$
be the inclusion and $\bar{\pi}:\bar{X}\rightarrow S$ be the structure
morphism of $\bar{X}$ as a projective bundle over $S$. Since $S$
is projective, $\bar{i}:=j\circ i:S\rightarrow\bar{X}$ is a closed
embedding.

Let $\beta\in H_{2}(S,\ZZ)$ be an effective class and $\chi\in\ZZ$.
By \cite{PT:09} there is a projective scheme $\PXX$ parametrizing
stable pairs $(\FF,s)$ with $\chi(\FF)=\chi$ and the cycle $[C_{\FF}]$
of the supporting curve is in class $\beta$. By removing the pairs
$(\FF,s)$ with supporting curve $C_{\FF}$ which intersect the closed
subschem $\bar{X}\setminus X$ , we have an open subscheme $\PX$
that parametrize stable pairs $(\FF,s)$ with $\FF$ supported on
$X$ and let $\hat{j}:\PX\rightarrow\PXX$ be the inclusion. Let $\bar{\FFF}$
be the universal sheaf on $\PXX\times\bar{X}$ and $\bar{\ss}:\OO_{\PXX\times\bar{X}}\rightarrow\bar{\FFF}$
be the universal section, then their restriction $\FFF$, $\ss$ to
$\PX\times X$ is the universal sheaf and the universal section corresponding
to the moduli space $\PX.$ Notice that $\left(\id_{\PX}\times j\right)_{*}\FFF=\left(\hat{j}\times\id_{\bar{X}}\right)^{*}\bar{\FFF}$
on $\PX\times\bar{X}$. We also use $\FFF$ to denote $\left(\id_{\PX}\times j\right)_{*}\FFF$
on $\PX\times\bar{X}$.

There exists an action of $G=\CS$ on $\bar{X}$ by scaling the fiber
such that $X$ is an invariant open subscheme. It follows that there
exist a canonical action of $G$ on $\PXX$ and on $\PX$ . Since
$X$ is an invariant open subscheme, $\PX$ is also invariant in $\PXX$.
Thus $\bar{\FFF}$ and $\FFF$ are equivariant sheaves and $\bar{\ss}$
and $\ss$ are equivariant morphism of sheaves.

Consider the following diagrams\begin{equation}
\begin{tikzcd}[column sep=tiny]
&\PX\times X\ar[ld,"p"']\ar[rd,"q"]&\\
\PX&&X.
\end{tikzcd}
\end{equation}Let $\II$ be the complex $[\xymatrix{\OO_{\PX\times X}\ar[r]^{\,\,\,\,\,\,\,\,\,\,\,\,\,\,\,\,\,\,\,\,\,\,\,\ss} & \FFF]}
$ in $D(\PX\times X)$.  Let $\EEE^{\bullet}$ be the complex 
\[
Rp_{*}\left(R\hhom\left(\II,\II\right)_{0}\otimes\omega_{X}\right)[2].
\]
Maulik, Pandharipande and Thomas has shown that the above complex
define a perfect obstruction theory on $\PX$ \cite{MPT:10}.

Notice that $\omega_{X}\simeq\OO_{X}\otimes\tweight^{*}$ so that
by Serre's duality we have an isomorphism $\left(\EEE^{\bullet}\right)^{\vee}\rightarrow\EEE^{\bullet}[-1]\otimes\tweight$
and $\EEE$ is a symmetric equivariant obstruction theory.

Let $\PS$ be the scheme parameterizing stable pairs $\left(\FF,s\right)$
on $S$ such that the support $C_{\FF}$ of $\FF$ is in class $\beta$
and $\FF$ has Euler characteristic $\chi(\FF)=\chi$. On $\PS\times S$
there exists a universal sheaves $\FFF$ and universal section $\ss$.
With the closed embedding $\hat{i}:=\id_{\PS}\times i$$:\PS\times S\rightarrow\PS\times X$
,
\[
\xymatrix{\OO_{\PS\times X}\ar[r] & \,}
\xymatrix{\hat{i}_{*}\OO_{\PS\times S}\ar[r]^{\,\,\,\,\,\,\,\,\,\,\,\,\,\hat{i}_{*}\ss} & \hat{i}_{*}\FFF}
\]
 is a family of pairs over $\PS$. This family induces a closed embedding
$\PS\rightarrow\PX$. Indeed, $\PS$ is a connected component of $\PX^{G}$.

Let $\II_{S}$ denote the complex $[\OO_{\PS\times S}\rightarrow\FFF]$
and $\II$ denotes the complex $[\OO_{\PS\times X}\rightarrow\hat{i}_{*}\FFF]$.
Proposition 3.4 of \cite{KT:14} gives us the decomposition of $\left.\EEE\right|_{\PS}$
into its fixed and moving part as follows:
\begin{equation}
\left(\left.\EEE^{\bullet}\right|_{\PS}\right)^{fix}\simeq R\hat{p}_{*}R\hhom\left(\II_{S},\FFF\right)^{\vee}\qquad\left(\left.\EEE^{\bullet}\right|_{\PS}\right)^{mov}\simeq R\hat{p}_{*}R\hhom\left(\II_{S},\FFF\right)[1]\otimes\tweight^{*}\label{eq:decomposition}
\end{equation}
We will use $\EE^{\bullet}$ to denote $\left(\left.\EEE^{\bullet}\right|_{\PS}\right)^{fix}$.
$\left(\left.\EEE^{\bullet}\right|_{\PS}\right)^{fix}$ defines a
perfect obstruction theory on $\PS$ with virtual dimension $v=\beta^{2}+n$. 

If there is a deformation of $S$ such that the class $\beta$ is
no longer algebraic, then the virtual fundamental class will be zero
because the the virtual class is deformation invariant. If we restrict
the deformation inside the locus when $\beta$ is always algebraic
we get the reduced obstruction theory. In \cite{KT:14}, Kool and
Thomas also construct a reduded obstruction theory on $\PX$ of virtual
dimension $h^{0,2}(S)$.

Proposition 3.4 of \cite{KT:14} gives us the decomposition of $\left.\EEE_{red}^{\bullet}\right|_{\PS}$
into fixed part and moving part as follows:

\begin{align*}
\left(\left(\left.\EEE_{red}^{\bullet}\right|_{\PS}\right)^{fix}\right)^{\vee} & =\text{Cone}\left(\xymatrix{R\hat{p}_{*}R\hhom\left(\II_{S},\FFF\right)\ar[r]^{\psi\,\,\,\,\,\,\,\,} & H^{2}(\OO_{S})\otimes\OO_{\PS}[-1]}
\right)\\
\left(\left.\EEE_{red}^{\bullet}\right|_{\PS}\right)^{mov} & =R\hat{p}_{*}R\hhom(\II_{S},\FFF)[1]\otimes\tweight
\end{align*}
where $\psi$ is the composition 
\[
\xymatrix{R\hat{p}_{*}R\hhom\left(\II_{S},\FFF\right)\ar[r] & R\hat{p}_{*}R\hhom(\FFF,\FFF)[1]\ar[r]^{\text{\,\,\,\,\,\,\,\,\,\,\,\,\,\,\,\,\,\,\,\,tr}} & R\hat{p}_{*}\OO[1]\ar[r] & R^{2}\hat{p}_{*}\OO[-1]}
\]
We will use $\EE_{red}^{\bullet}$ to denote $\left(\left.\EEE_{red}^{\bullet}\right|_{\PS}\right)^{fix}$.
$\EE_{red}^{\bullet}$ defines a perfect obstruction theory on $\PS$
of virtual dimension $v_{red}=\beta^{2}+n+h^{0,2}(S)$.

For a cohomology classes $\sigma_{i}\in H^{*}(X,\ZZ)$, $i=1,\dots,m$
Kool and Thomas assign a class $\tau(\sigma_{i}):=p_{*}\left(ch_{2}\left(\FFF\right)q^{*}\sigma\right)\in H^{*}\left(\PX\right)$
where $ch_{2}\left(\FFF\right)$ is the second Chern character of
$\FFF$ and define the reduced invariants as 
\[
\mathcal{P}_{\beta,\chi}^{red}(X,\sigma_{1},\ldots,\sigma_{m}):=\int_{[\PX^{G}]^{vir}}\frac{1}{e\left(N^{vir}\right)}\prod_{i=1}^{m}\tau(\sigma_{i}).
\]
\foreignlanguage{american}{Assume that $b_{1}(S)=0$ so that $\hilb_{\beta}=|\LL|$.
It was shown that if for all $i$ , $\sigma_{i}$ is the pullback
of the Poincar\'e dual of the $[pt]\in H^{4}(S,\ZZ)$ represented
by a closed point then 
\[
\mathcal{P}_{\beta,\chi}^{red}\left(X,[pt]^{m}\right)=\int_{j^{!}[\PX^{G}]^{vir}}\frac{1}{e\left(N^{vir}\right)}
\]
where $j^{!}$ is the refined Gysin homomorphim corresponding to the
following cartesian diagram 
\[
\xymatrix{\PP^{\epsilon}\times_{|\LL|}\PX\ar[d]\ar[r] & \PX\ar[d]^{\div}\\
\PP^{\epsilon}\ar[r]^{j} & |\LL|
}
\]
where $j$ is a regular embedding $\PP^{\epsilon}\subset|\LL|$ of
a sublinear system and $\epsilon=\dim|\LL|-m$.}
\selectlanguage{american}%

\subsection{$\delta$-nodal Curve Counting via Kool-Thomas invariants}

\selectlanguage{english}%
Recall that a line bundle $\LL$ on a surface $S$ is $n$-very ample
if for any subscheme $Z$ with length $\leq n+1$ the natural morphsim
$H^{0}(X,\LL)\rightarrow H^{0}(Z,\left.\LL\right|_{Z})$ is surjective.

\selectlanguage{american}%
We assume that $b_{1}(S)=0$ and let $\LL$ be $\left(2\delta+1\right)$-very
ample line bundle on $S$ with $H^{1}(\LL)=0$. We also assume that
the first Chern class $c_{1}(\LL)=\beta\in H^{2}(S,\ZZ)$ of $\LL$
satisfies the condition that the the morphism $\cup\beta:H^{1}(T_{S})\rightarrow H^{2}(\OO_{S})$
is surjective; in particular then $H^{2}(\LL)=0$ also. Given a curve
$C$ not necessarily reduced and connected, we let $g(C)$ to denote
its arithmetic genus, defined by $1-g(C):=\chi(\OO_{C})$. If $C$
is reduced its geometric genus $\bar{g}(C)$ is defined to be the
$g(\bar{C})$ the genus of its normalisation. And let $h$ denote
the arithmetic genus of curves in $\left|\LL\right|$, so that $2h-2=\beta^{2}-c_{1}(S)\beta$.

\selectlanguage{english}%
Proposition 2.1 of \cite{KST:11} and Proposition 5.1 of \cite{KT:14}
tells us that the general $\delta$-dimensional linear system $\PP^{\delta}\subset|\LL|$
only contains reduced and irreducible curves. Moreover $\PP^{\delta}$
contains finitely many $\delta$-nodal curves with geometric genus
$h-\delta$ and other curves has geometric genus $>h-\delta.$

Kool and Thomas also define

\[
\mathcal{P}_{\chi,\beta}^{red}(S,[pt]^{m}):=\int_{\left[\PS\right]^{red}}\frac{1}{e\left(N^{vir}\right)}\tau\left(\left[pt\right]\right)^{m}.
\]
They compute $\Pslmn$ in \cite{KT2:14} and $\Pslmn$ is given by
the following expression
\begin{equation}
t^{n+\chi(\LL)-\chi(\OO_{S})}\left(-\frac{1}{t}\right)^{n+\chi(\LL)-1-m}\int\limits _{S^{[n]}\times\PP^{\chi(\LL)-1-m}}c_{n}(\LL^{[n]}(1))\frac{c_{\bullet}(T_{S^{[n]}})c_{\bullet}\left(\OO(1)^{\oplus\chi(\LL)}\right)}{c_{\bullet}\left(\LL^{[n]}(1)\right),}\label{eq:Kool-Thomas}
\end{equation}
 where $\LL^{[n]}$ is the vector bundle of rank $n$ on $S^{[n]}$
with fiber $H^{0}(\LL|_{Z})$ for a point $Z\in S^{[n]}$ and $\LL^{[n]}(1)=\LL^{[n]}\boxtimes\OO(1)$.

Under the above assumption, only the contribution from $\PS$ counts
for $\mathcal{P}_{\beta,\chi}^{red}\left(X,[pt]^{m}\right)$ so $\mathcal{P}_{\beta,\chi}^{red}\left(X,[pt]^{m}\right)=\mathcal{P}_{\chi,\beta}^{red}(S,[pt]^{m})$.
Define the generating function for $\mathcal{P}_{\beta,\chi}^{red}(X,[pt]^{m})$
as 
\[
\sum_{\chi\in\ZZ}\mathcal{P}_{\beta,\chi}^{red}(X,[pt]^{m})q^{\chi}
\]
 then define $\bar{q}=q^{1-i}(1+q)^{2i-2}$ then the coefficient of
$\bar{q}^{h-\delta}$ is $n_{\delta}(\LL)t^{h-\delta-1+\int_{\beta}c_{1}(S)}$
where $n_{\delta}(\LL)$ is the number of $\delta$-nodal curves in
$\PP^{\delta}.$

$n_{\delta}(\LL)$ has been studied for example in \cite{Go:98} and
\cite{KST:11}. In \cite{KST:11}, it is shown that after the same
change of variable $n_{\delta}(\LL)$ can be computed as the coefficient
of $\bar{q}^{h-\delta}$ of the generating function 
\[
\sum_{i=0}^{\infty}e(\hilb^{n}(\CC/\PP^{\delta}))q^{i+1-h}
\]
where $e(\hilb^{i}(\CC/\PP^{\delta})$ is the Euler characteristic
of the relative Hilbert scheme of points. Moreover $e(\hilb^{n}(\CC/\PP^{\delta}))$
can be computed as 
\[
\int_{S^{[n]}\times\PP^{\delta}}c_{i}(\LL^{[n]}(1))\frac{c_{\bullet}\left(T_{S^{[n]}}\right)c_{\bullet}\left(\OO(1)^{\oplus\delta+1}\right)}{c_{\bullet}\left(\LL^{[n]}(1)\right)}.
\]
In \cite{KST:11}, we have to assume that $\LL$ is sufficiently ample
and $H^{i}(\LL)=0$ for $i>0$ so that $\hilb^{n}(\CC/\PP^{\delta})$
are smooth. While in \cite{KT:14}, $\mathcal{P}_{\chi,\beta}^{red}(S,[pt]^{m})$
can be defined under the assumption that $H^{2}(\LL)=0$ for all $\LL$
with $c_{1}(\LL)=0$. We can think $n_{\delta}(\LL)$ as a generalization
of the one studied in \cite{KST:11}. In particular, we can think
$n_{\delta}(\LL)$ as a virtual count of $\delta$-nodal curves for
not necessarily ample line bundle $\LL$.

\section{Equivariant $K$-theoretic PT invariants of local surfaces}

In this section we will recall the $K$-theoretic invariants proposed
by Neklrasof and Okounkov in \cite{NO:14} and introduce a class that
will account for the incidence of the supporting curve of a stable
pairs and a point. The definition of this class is motivated by the
definition of points insertions in \cite{KT:14}.

Given a perfect obstruction theory $\phi:E^{\bullet}\rightarrow\LLL_{Y}$
the $K$-theoretic version of the virtual class is given in \cite{FG:10}
as follows:

\[
\OO_{Y}^{vir}:=\sum_{i}^{\infty}(-1)^{i}[\tor_{\OO_{E_{1}}}^{i}(\OO_{Y},\OO_{D})]_{Y}\in G(Y)
\]
 where $D$ is the cone\foreignlanguage{american}{ $D\subset E_{1}$
that gives the virtual class $[Y]^{vir}.$ We call $\OO_{Y}^{vir}$
the virtual structure sheaf of $Y$. Note that $\OO_{Y}^{vir}$ is
not a sheaf but a class in the Grothendieck group of coherent sheaves
on $Y$. If $\phi$ is an equivariant perfect deformation theory,
$D$ is an invariant subscheme of $E_{1}$ and we can construct $\OO_{Y}^{vir}\in G^{G}(Y)$.
If $Y$ is proper over $\CCC$, the virtual fundamental class and
virtual structure sheaf are related by the following virtual Riemann-Roch
formula by Fantechi and G\"ottsche in \cite{FG:10}}

\selectlanguage{american}%
\begin{equation}
\chi(\OO_{Y}^{vir})=\int_{[Y]^{vir}}\td(T^{vir})\label{eq:virtual-rr}
\end{equation}
where $T_{Y}^{vir}:=\left[E_{0}\right]-\left[E_{1}\right]\in K(Y)$
. We call $T_{Y}^{vir}$ the virtual tangent bundle and the dual of
it's determinant $K_{Y,vir}:=\left(\det E_{0}\right)^{-1}\otimes\det E_{1}$
$=\det E^{0}\otimes\left(\det E^{-1}\right)^{-1}\in\text{Pic}(Y)$
the virtual canonical bundle.

If $vd=0$, by equation (\ref{eq:virtual-rr}) we have 
\begin{equation}
\chi(\OO_{Y}^{vir})=\int_{[Y]^{vir}}1\in\ZZ\label{eq:virtualinvariant}
\end{equation}
so that we can use either virtual structure sheaf or virtual fundamental
class to define a numerical invariant. If there exist an isomorphism
$\theta:E^{\bullet}\rightarrow\left(E^{\bullet}\right)^{\vee}[1]$
then $\rk E^{\bullet}=\rk\left(\left(E^{\bullet}\right)^{\vee}[1]\right)=-\rk E^{\bullet}$
so that $vd=0$.

One advantage of working equivariantly is that to compute $\chi\left(\hat{\OO}_{Y}^{vir}\right)$,
we can use the virtual localization formula for the Grothendieck group
of coherent sheaves from \cite{Fen:18} by Qu . Let $G=\CS$ act on
$Y$ and $\phi:\EEE^{\bullet}\rightarrow\LLL_{Y}$ be an equivariant
perfect obstruction theory. Similar to the virtual localization formula
by Graber and Pandaripandhe, it states that, the virtual structure
sheaf equals a class coming from the fixed locus. On $Y^{G}$ we can
decompose $\EEE^{\bullet}$ into $\left(\EEE^{\bullet}\right)^{fix}\oplus\left(\EEE^{\bullet}\right)^{mov}$
where $\left(\EEE^{\bullet}\right)^{fix}$ is a two term complex with
zero weight and $\left(\EEE^{\bullet}\right)^{mov}$ is a two term
complex with non zero weight. Let $i:Y^{G}\rightarrow Y$ be the closed
embedding and let $N^{vir}=((\EEE^{\bullet})^{mov})^{\vee}.$ Then
the virtual localization formula can be stated as

\begin{equation}
i_{*}\left(\frac{\OO_{Y^{G}}^{vir}}{\bigwedge^{\bullet}\left(N^{vir}\right)^{\vee}}\right)=\OO_{Y}^{vir}\qquad\in G^{G}(Y)\otimes_{\ZZ[\tweight,\tweight^{-1}]}\QQ(\tweight)\label{eq:vir-loc}
\end{equation}
where for a two term complex $F^{\bullet}=[F^{-1}\rightarrow F^{0}]$,
$\bigwedge^{\bullet}F^{\bullet}=\frac{\sum_{i=0}^{r_{0}}(-1)^{i}\bigwedge^{i}F^{0}}{\sum_{j=0}^{r_{1}}(-1)^{j}\bigwedge^{j}F^{-1}}$
with $r_{i}=\rk F^{-i}$. On the fixed locus, the Grothendieck group
of coherent sheaves is isomorphic to the tensor product $G(Y^{G})\otimes_{\ZZ}K^{G}(pt)$
which is easier to work with.

In \cite{NO:14}, Nekrasov and Okounkov propose that we should choose
a square root of $K^{vir}$ and work with the twisted virtual structure
sheaf \cite{Qu:72}
\[
\hat{\OO}_{Y}^{vir}:=K_{Y,vir}^{\half}\otimes\OO_{Y}^{vir}.
\]
To get a refinement of (\ref{eq:virtualinvariant}), we have to consider
the action of the symmetry group of $Y$ so that $\chi\left(\hat{\OO}_{Y}^{vir}\right)$
is a function with the equivariant parameter as variables. For example
let $Y$ be the moduli space of stable pairs on a toric 3-folds $X$
and $\left(\CS\right)^{3}$acts on $Y$. Choi, Katz and Klemm have
calculated $\chi(\hat{\OO}_{Y}^{vir})$ where $X$ is the total space
of the canonical bundle $K_{S}$ for $S=\PP^{2}$ and $S=\PP^{1}\times\PP^{1}$
in \cite{CKK:12}. They have shown that the generating function with
coefficients $\chi(\hat{\OO}_{Y}^{vir})$ calculates a refinement
of BPS invariants.

To incorporate $K_{Y,vir}^{\half}$ in our computation we will consider
a double cover $G'$ of $G$ so that $\tweight^{\half}$ is a representation
of $G'$. Explicitly let $\zeta:G':=\CS\rightarrow\CS=G$, $z\mapsto z^{2}$
be the double cover. Then $G'$ acts on $Y$ via $\zeta$ by defining
$\sigma'_{Y}:G'\times Y\rightarrow Y,$$(g',y)\mapsto\sigma_{Y}(\zeta(g'),y)$
where $\sigma:G\times Y\rightarrow Y$ is the morphism defining the
action of $G$ on $Y$. Also via $\zeta$ any $G$-equivariant sheaf
$\FF$ on $Y$ is a $G'$-equivariant sheaf by pulling back the equivariant
structure via $\zeta$. This gives an exact functor $Coh^{G}(Y)\rightarrow Coh^{G'}(Y)$
and a group homomorphism $\hat{\zeta}:G^{G}(Y)\rightarrow G^{G'}(Y)$.
Moreover $\hat{\zeta}$ is a morphism of $K^{G}(\text{pt})$-modules.
For example, the primitive representation $\tweight$ of $G$ has
weight $2$ at $G'$ module. We can take the primitive representation
of $G'$ as the canonical square root of $\tweight$ and denote it
by $\tweight^{\half}$.

Next we have to compute the restriction of $K_{Y,vir}^{\half}$ on
the fixed locus. Notice that $Y^{G'}=Y^{G}$. Assume that there exist
an isomorphism $\theta:E^{\bullet}\rightarrow\left(E^{\bullet}\right)^{\vee}[1]\otimes\tweight$.
By the argument of Richard Thomas in \cite{Tho:18}, it shows that
on $Y^{G}$, $K_{Y,vir}^{\half}$ has a canonical equivariant structure.

We decompose $\left.E^{\bullet}\right|_{Y^{G}}$ into its weight spaces
so that 
\[
\left.E^{\bullet}\right|_{Y^{\CS}}=\bigoplus_{i\in\ZZ}F^{i}\tweight^{i}
\]
where $F^{i}$ are two-term complex of non-equivariant vector bundle
which only finitely many of them are nonzero and $\tweight$ is a
representation of $G$ of weight $1$. $\det E^{\bullet}$ can be
computed as the determinant of its class in $K^{G}(Y).$The isomorphism
$\theta$ implies that $[(F^{i})^{\vee}]=[F^{-i-1}[-1]]$ in $K^{G}(Y)$.
Thus $K_{Y,vir}$ is a squre twisted by a power of $\tweight$ , explicitly
\[
K_{Y,vir}=\left(\bigotimes_{i\geq0}\det\left(F^{i}\tweight^{i}\right)\right)^{\otimes2}\tweight^{r_{0}+r_{1}+\ldots}
\]
where $r_{i}=\rk F^{i}$. Thus the canonical choice for $\left.K_{Y,vir}^{\half}\right|_{Y^{G}}$
is 
\[
\bigotimes_{i\geq0}\det\left(F^{i}\tweight^{i}\right)\otimes\tweight^{\half(r_{0}+r_{1}+\ldots)}\in K^{G}(Y^{G})\otimes_{\ZZ[\tweight,\tweight^{-1}]}\ZZ[\tweight^{\half},\tweight^{-\half}].
\]
 Recall that $N^{vir}$ is the moving part of the dual of $\left.E^{\bullet}\right|_{Y^{G}}$
so that in our case $\left(N^{vir}\right)^{\vee}=\bigoplus_{i\neq0}F^{i}\tweight^{i}$. 

After choosing a square root of $K_{Y,vir}$, and that the square
root has an equivariant structure, by equation (\ref{eq:vir-loc})
we then have 
\[
i_{*}\left(\frac{\OO_{Y^{G}}^{vir}\otimes\left.K_{Y,vir}^{\half}\right|_{Y^{G}}}{\bigwedge^{\bullet}\left(N^{vir}\right)^{\vee}}\right)=\hat{\OO}_{Y}^{vir}\qquad\in K^{G}(Y)\otimes_{\ZZ[\tweight,\tweight^{-1}]}\QQ(\tweight^{\half})
\]
 If $Y$ is compact we can apply the right derived functor  $R\Gamma$
to both sides of the above equation and we have 
\begin{equation}
R\Gamma\left(Y^{G},\frac{\OO_{Y^{G}}^{vir}\otimes\left.K_{Y,vir}^{\half}\right|_{Y^{G}}}{\bigwedge^{\bullet}\left(N^{vir}\right)^{\vee}}\right)=R\Gamma\left(Y,\hat{\OO}_{Y}^{vir}\right)\in\QQ(\tweight^{\half}).\label{eq:euler-char}
\end{equation}
Thomas has proved the above identity in without using equation (\ref{eq:vir-loc}).
Furthermore Thomas has shown that 
\[
\left.R\Gamma\left(Y^{G},\frac{\OO_{Y^{G}}^{vir}\otimes\left.K_{Y,vir}^{\half}\right|_{Y^{G}}}{\bigwedge^{\bullet}\left(N^{vir}\right)^{\vee}}\right)\right|_{\tweight=1}=\int_{[Y]^{vir}}\frac{1}{e\left(N^{vir}\right)}\in\QQ
\]

In the case that we are interested on, the moduli space $Y$ is not
compact. Thus we will use the left hand side of equation (\ref{eq:euler-char})
to define our invariants.

\subsection{Equivarinat $K$-theoretic invariants}

\sloppy Let $Y$ be the moduli space of stable pairs on the canonical
bundle $X:=Spec\left(\text{\ensuremath{\sym}}\,\omega_{S}^{\vee}\right)$
of a smooth projective surface i.e. $Y=\mathcal{P}_{\chi}(X,i_{*}\beta)$
for some $\chi\in\ZZ$ and $\beta\in H_{2}(S,\ZZ)$ where $i:S\rightarrow X$
is the zero section. We will use $\pi$ to denote the structure map
$X\rightarrow S$ of $X$ as a vector bundle over $S$. Note that
$\PX$ is a quasiprojective scheme over $\CCC$. In particular, $\PX$
is separated and of finite type. 

Let $G=\CS$ act on $X$ by scaling the fiber of $\pi$. Consider
the following diagram:

\begin{equation}
\begin{tikzcd}[column sep=small]
& \PX\times X\ar[ld,"p"']\ar[rd,"q"]&\\
\PX&& X
\end{tikzcd}
\end{equation}Since $\PX$ has an equivariant perfect obstructrion theory $\phi:\EEE^{\bullet}\rightarrow\LLL_{\PX}$
where $\EEE^{\bullet}$ is the complex $Rp_{*}\left(R\hhom\left(\II,\II\right)_{0}\otimes\omega_{p}\right)[2]$
with $\omega_{P}=q^{*}\omega_{X}$ and since $X$ is Calabi-Yau $\omega_{X}\simeq\OO\otimes\tweight^{*}$
Serre duality gives us the isomorphism
\begin{equation}
\left(\EEE^{\bullet}\right)^{\vee}\simeq\EEE^{\bullet}[-1]\otimes\tweight.\label{eq:serre duality}
\end{equation}
So that by Proposition 2.6 of \cite{Tho:18} we have an equivariant
line bundle $\left.K_{\PX,vir}^{\half}\right|_{\PX^{G}}$ on $\PX^{G}$.

We want to study how to define a class that contains the information
about the incidence between a $K$-theory class in $K^{T}(X)$ and
the class of the universal sheaf $\FFF$. From another direction we
also want to give a refinement for the Kool-Thomas invariants. In
\cite{KT:14}, Kool and Thomas take the cup product of the second
Chern character of the universal sheaf $\FFF$ with the cohomology
class of points coming from $X$. Informally we could think that as
taking the intersection between the universal supporting curve and
the points of $X$.

In this article we are exploring two approaches. In the first approach
we are trying to immitate the definition of descendent used in the
article \cite{KT:14}. In \cite{KT:14} the authors are cupping the
cohomology class coming from $X$ with the second Chern class of $\FFF$.
Since we are unfamiliar on how to define Chern classes as a $K$-theory
class, we are considering to take the class of the structure sheaf
of the supporting scheme $\OO_{\CC_{\FFF}}$ and take the tensor product
of $\OO_{\CC_{\FFF}}$ with the the class coming from $X$ through
the projection $q:\PX\times X\rightarrow X$. In the second approach
we use the $K$-theory class on $\PX\times S$ of the structure sheaf
of the divisor $\div\,\pi_{*}\FFF$ and take the tensor product of
$\OO_{\div\pi_{*}\FFF}$ with the class coming from $S$ through the
projection $q_{S}:\PX\times S\rightarrow S$.

The following proposition is an equivariant version of Proposition
2.1.0 in \cite{HL:10} which we will use to define the $K$-theory
class.
\begin{prop}
\label{prop:resolution}Let $f:Y\rightarrow T$ be a smooth projective
$G$-map of relative dimension $n$ with $G$-equivariant $f$-very
ample line bundle $\OO_{Y}(1)$. Let $\FF$ be a $G$-equivariant
sheaf flat over $T$. Then there is a resolution of $\FF$ by a bounded
complex of $G$-equivariant locally free sheaves :

\[
\xymatrix{0\ar[r] & \FF_{n}\ar[r] & \FF_{n-1}\ar[r] & \ldots\ar[r] & \FF_{0}\ar[r] & \FF}
\]
where all morphisms are $G$-equivariant such that $R^{n}f_{*}F_{\nu}$
is locally free for $\nu=0,\ldots,n$ and $R^{i}f_{*}F_{\nu}=0$ for
$i\neq n$ and $\nu=0,\ldots,n$.
\end{prop}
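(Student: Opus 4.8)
The plan is to run the classical syzygy construction of \cite{HL:10} equivariantly; the point is that every morphism it produces is a counit of an adjunction or a canonical base-change/duality isomorphism, hence automatically $G$-equivariant, and that pushforward and pullback of $G$-equivariant coherent sheaves along the $G$-maps in sight carry canonical $G$-equivariant structures (for $f_{*}$ one uses that $f$ factors $G$-equivariantly through a projective bundle over $T$ followed by a closed immersion, both covered by the results of Section~\ref{sec:Equivariant--theory}).

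First I would build the resolution by induction. Set $K_{-1}:=\FF$. Given a $G$-equivariant, $T$-flat coherent sheaf $K_{\nu-1}$ (for $0\le\nu\le n-1$), choose $a_{\nu}\gg0$ so that simultaneously $K_{\nu-1}\otimes\OO_{Y}(a_{\nu})$ is $f$-globally generated with vanishing higher direct images, $H^{i}(Y_{t},\OO_{Y_{t}}(-a_{\nu}))=0$ for $i<n$ and all $t\in T$, and $Rf_{*}\OO_{Y}(-a_{\nu})$ is concentrated in degree $n$ with $R^{n}f_{*}\OO_{Y}(-a_{\nu})\simeq\bigl(f_{*}(\omega_{Y/T}\otimes\OO_{Y}(a_{\nu}))\bigr)^{\vee}$ locally free (by relative Serre vanishing and relative Serre duality); all this holds for $a_{\nu}$ large since $T$ is Noetherian and $f$ is smooth projective of relative dimension $n$. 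Put $E_{\nu}:=f_{*}\bigl(K_{\nu-1}\otimes\OO_{Y}(a_{\nu})\bigr)$, which is a $G$-equivariant locally free sheaf on $T$: local freeness follows because the Grothendieck complex computing the direct images of a $T$-flat sheaf with vanishing higher direct images is exact in positive degrees, so its $H^{0}$ splits off as a direct summand of a finite free module. Then $\FF_{\nu}:=f^{*}E_{\nu}\otimes\OO_{Y}(-a_{\nu})$ is $G$-equivariant and locally free, the $\OO_{Y}(-a_{\nu})$-twist of the counit $f^{*}f_{*}(K_{\nu-1}\otimes\OO_{Y}(a_{\nu}))\to K_{\nu-1}\otimes\OO_{Y}(a_{\nu})$ is a $G$-equivariant surjection $\FF_{\nu}\to K_{\nu-1}$ (fibrewise it is a global-generation epimorphism), and $K_{\nu}:=\ker(\FF_{\nu}\to K_{\nu-1})$ is again $G$-equivariant and, being the kernel in a short exact sequence of $T$-flat sheaves, $T$-flat. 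After $n$ steps put $\FF_{n}:=K_{n-1}$; this gives the exact $G$-equivariant complex $0\to\FF_{n}\to\FF_{n-1}\to\cdots\to\FF_{0}\to\FF\to0$ with $\FF_{0},\dots,\FF_{n-1}$ locally free.

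Three checks remain. (a) $\FF_{n}$ is locally free: since every sheaf in the resolution is $T$-flat, its restriction to a fibre $Y_{t}$ stays exact, so $\FF_{n}|_{Y_{t}}$ is an $n$-th syzygy of $\FF|_{Y_{t}}$ on the smooth — hence regular, $n$-dimensional — variety $Y_{t}$; by Auslander--Buchsbaum $\FF|_{Y_{t}}$ has projective dimension $\le n$ pointwise, so $\FF_{n}|_{Y_{t}}$ is locally free, and a $T$-flat coherent sheaf on $Y$ that is locally free along every fibre is locally free (lift a fibrewise basis; the kernel of the resulting surjection restricts to $0$ on the fibre by $T$-flatness, hence vanishes by Nakayama). (b) For $\nu\le n-1$ the equivariant projection formula gives $R^{i}f_{*}\FF_{\nu}\simeq E_{\nu}\otimes R^{i}f_{*}\OO_{Y}(-a_{\nu})$, which by the choice of $a_{\nu}$ and by relative dimension vanishes for $i\ne n$ and is locally free for $i=n$. (c) For $\nu=n$: chasing the long exact sequences of $0\to K_{\nu}\to\FF_{\nu}\to K_{\nu-1}\to0$ ($0\le\nu\le n-1$) and using (b) yields $f_{*}K_{\nu}=0$ and isomorphisms $R^{p}f_{*}K_{\nu-1}\simeq R^{p+1}f_{*}K_{\nu}$ for $0\le p\le n-2$; telescoping these down to an $f_{*}$ of some $K_{m}$ forces $R^{j}f_{*}\FF_{n}=0$ for $0\le j\le n-1$, and $R^{j}f_{*}\FF_{n}=0$ for $j>n$ by relative dimension, so $Rf_{*}\FF_{n}$ is concentrated in degree $n$.

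What this does not yet give is local freeness of $R^{n}f_{*}\FF_{n}$ — and this is the main obstacle, since a $T$-flat sheaf whose derived pushforward is concentrated in one degree need not have that direct image locally free when $T$ is non-reduced. I would obtain it by a Serre-duality flip: breaking the fibrewise exact sequence $0\to\FF_{n}|_{Y_{t}}\to\FF_{n-1}|_{Y_{t}}\to\cdots\to\FF_{0}|_{Y_{t}}\to\FF|_{Y_{t}}\to0$ into short exact sequences, in which each $\FF_{\nu}|_{Y_{t}}$ ($\nu\le n-1$) is a direct sum of copies of $\OO_{Y_{t}}(-a_{\nu})$ and hence has no cohomology below degree $n$, one reads off $H^{i}(Y_{t},\FF_{n}|_{Y_{t}})=0$ for $i<n$; by Serre duality on $Y_{t}$ this says $H^{j}(Y_{t},\FF_{n}^{\vee}|_{Y_{t}}\otimes\omega_{Y_{t}})=0$ for $j>0$. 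Since $\FF_{n}^{\vee}\otimes\omega_{Y/T}$ is locally free on $Y$, hence $T$-flat, cohomology and base change together with the splitting argument give $R^{>0}f_{*}(\FF_{n}^{\vee}\otimes\omega_{Y/T})=0$ and $f_{*}(\FF_{n}^{\vee}\otimes\omega_{Y/T})$ locally free; relative Serre duality then identifies this sheaf with the $\OO_{T}$-dual of $R^{n}f_{*}\FF_{n}$, and since $\FF_{n}$ is locally free and $f$ proper and flat, $Rf_{*}\FF_{n}$ is a perfect complex, so $R^{n}f_{*}\FF_{n}$ is perfect; a perfect sheaf whose $\OO_{T}$-dual is locally free is itself locally free by biduality. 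Finally, equivariance of the entire resolution is automatic, every sheaf and arrow being built from the counits and from the functorial base-change and duality isomorphisms, and kernels of $G$-equivariant maps between $G$-equivariant sheaves being $G$-equivariant.
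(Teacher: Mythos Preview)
Your proof is correct and takes essentially the same approach as the paper: the paper's proof consists of the single sentence ``The equivariant structure of all sheaves constructed in the proof of Proposition 2.1.10 in \cite{HL:10} can be defined canonically,'' and you have simply written out that construction in full, observing at each step that the sheaves and maps (counits, kernels, base change, duality) are canonical and hence carry $G$-equivariant structures. Your Serre-duality argument for the local freeness of $R^{n}f_{*}\FF_{n}$ is a slight variation on the usual one (constancy of $h^{n}$ via constancy of $\chi$ and vanishing of the other $h^{i}$), but it is valid once read as a statement about the derived dual of the perfect complex $Rf_{*}\FF_{n}$.
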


\begin{proof}
The equivariant structure of all sheaves constructed in the proof
of Proposition 2.1.10 in \cite{HL:10} can be defined canonically.
\end{proof}
If $\OO_{\CC_{\FFF}}$ is flat over $\PX$ then $\OO_{\CC_{\FFF}}$
define a $K$-theory class in $\PX\times X$. To push the tensor product
down to a $K$-theory class in $\PX$, we push forward $\OO_{\CC_{\FFF}}$
to $\PX\times\bar{X}$ where $\bar{X}$ is $\PP(K_{S}\oplus\OO_{S})$
the projective completion of $X$. Since $\CC_{\FFF}$ is proper relative
to $\PS$ the push forward $i_{*}\OO_{\CC_{\FFF}}$ by the open embedding
$i:\PX\times X\rightarrow\PX\times\bar{X}$ is a coherent sheaf on
$\PX\times\bar{X}$. Then Proposition \ref{prop:resolution} implies
that $\OO_{\CC_{\FFF}}$ has a resolution by a finite complex of locally
free sheaf $F^{\bullet}$ on $\PX\times\bar{X}$ so that we can take
$[\OO_{\CC_{\FFF}}]:=\sum_{i}(-1)^{i}[F^{i}]$. The class $[\OO_{\CC_{\FFF}}]$
is independent of the resolution.

In section \ref{sec:Equivariant--theory} we have described the ring
homomorphism $f^{*}:K^{G}(\bar{Y})\rightarrow K^{G}(Y)$ for any morphism
of sheaves $f:Y\rightarrow\bar{Y}$. We also described the group homomorphism
$f_{*}:K^{G}(Y)\rightarrow K^{G}(\bar{Y})$ when $f$ is the structure
morphism of a projective bundle or when $f$ is finite and $f_{*}\FF$
has a resolution by locally free sheaves.

Consider the following diagram\begin{equation}
\begin{tikzcd}[column sep=small]
& \PX\times \bar{X}\ar[ld,"\bar{p}"']\ar[rd,"\bar{q}"]&\\
\PX&& \bar{X}
\end{tikzcd}
\end{equation} Let $\bar{\pi}:\bar{X}\rightarrow S$ be the structure morphism of
$\bar{X}$ as a projective bundle over $S$. We assign for each class
$\alpha\in K^{T}(X)$ a class $\gamma\left(\alpha\right)$ in $K^{T}(\PX)$
as follows. The pullback map $\pi^{*}:K^{T}(S)\rightarrow K^{T}(X)$
is an isomorphism. Thus there exist a unique class $\hat{\alpha}\in K^{T}(S)$
such that $\pi^{*}\hat{\alpha}=\alpha$. We define $\gamma\left(\alpha\right):=\bar{p}_{*}\left(\left[\OO_{\CC_{\bar{\FFF}}}\right].\left[\bar{q}^{*}\circ\bar{\pi}^{*}\hat{\alpha}\right]\right)$.
By Proposition \ref{prop:resolution}, $\left[\OO_{\CC_{\bar{\FFF}}}\right]\in K^{T}(\PX\times\bar{X})$
and since $\bar{X}$ is smooth and projective over $\CCC$, $\bar{p}_{*}$
can be defined as the composition of $i_{*}$ and $r_{*}$ where $i$
is a regular embedding and $r$ is the structure morphism $\PP_{\PX}^{N}$$\rightarrow\PX$.
Thus the class $\gamma(\alpha)$ is well defined. In particular for
every subscheme $Z\subset X$, $\gamma(\OO_{Z})$ is an element in
$K^{T}(\PX)$.

For the second approach, $\div\,\pi_{*}\FFF$ is a Cartier divisor
on $\PX\times S$ so that we have a line bundle $\OO(\div\,\pi_{*}\FFF)$
and exact sequence 
\[
\xymatrix{0\ar[r] & \OO(-\div\,\pi_{*}\FFF)\ar[r] & \OO\ar[r] & \OO_{\div\,\pi_{*}\FFF}\ar[r] & 0}
.
\]
Thus the $K$-theory class of $\OO_{\div\,\pi_{*}\FFF}$ is $1-[\OO(-\div\,\pi_{*}\FFF)]$.

Consider the following diagram \begin{equation}\label{diag:qs}
\begin{tikzcd}[column sep=small]
&\PX\times S\ar[dl,"\hat{p}"']\ar[dr,"q_S"]&\\
\PX&&S.\\
\end{tikzcd}
\end{equation}Similar to the first approach we assign for each $\alpha\in K^{T}(X)$
the class $\bar{\gamma}(\alpha):=\hat{p}_{*}\left([\OO_{\div\pi_{*}\FFF}].q_{S}^{*}\hat{\alpha}\right).$\,

In this article we only working for the case when $\alpha$ is represented
by the class of the pullback of a closed point $s\in S$. Instead
of $\gamma\left(\pi^{*}\left[\OO_{s}\right]\right)$ we will use $\gamma\left(\left[\OO_{s}\right]\right)$
to denote this class. We also assume that $b_{1}(S)=0$ so that $\hilb_{\beta}$
is simply $|\LL|$ for a line bundle $\LL$ on $S$ with $c_{1}(\LL)=\beta$.
In this article, we want to study the following invariants 
\begin{equation}
R\Gamma\left(\mathcal{P}^{G},\frac{\OO_{\mathcal{P}^{G}}^{vir}}{\bigwedge^{\bullet}\left(N^{vir}\right)^{\vee}}\otimes K_{\mathcal{P},vir}^{\frac{1}{2}}\vert_{\mathcal{P}^{G}}\otimes\left.\prod_{i=1}^{m}\beta_{i}\right|_{\PPP^{G}}\right)\in\QQ\left(\tweight^{\half}\right)\label{eq:invariants}
\end{equation}
where $\beta_{i}$ is either $\tausi$ or $\tausssi$ with $\OO_{s_{i}}$
are the classes of the structure sheaves of closed points $s_{i}\in S$.
In a special case that we have worked out in this article in order
to make the invariant coincide with Kool-Thomas invariant when we
evaluate it at $\tweight=1$ we have to replace $\gamma(\OO_{s_{i}})$
by $\frac{\gamma(\OO_{s_{i}})}{\tweightb}$ and $\tausssi$ with $\frac{\tausssi}{\tweightb}$.
Thus we define the following invariants
\[
\pxlmn:=R\Gamma\left(\PPP^{G},\frac{\OO_{\mathcal{P}^{G}}^{vir}}{\bigwedge^{\bullet}\left(N^{vir}\right)^{\vee}}\otimes K_{\mathcal{P},vir}^{\frac{1}{2}}\vert_{\mathcal{P}^{G}}\otimes\left.\prod_{i=1}^{m}\frac{\gamma(\OO_{s_{i}})}{\tweight^{-\half}-\tweight^{\half}}\right|_{\mathcal{P}^{G}}\right)
\]
 when $\OO_{\CC_{\FFF}}$ is flat and
\[
\bpxlmn:=R\Gamma\left(\PPP^{G},\frac{\OO_{\mathcal{P}^{G}}^{vir}}{\bigwedge^{\bullet}\left(N^{vir}\right)^{\vee}}\otimes K_{\mathcal{P},vir}^{\frac{1}{2}}\vert_{\mathcal{P}^{G}}\otimes\left.\prod_{i=1}^{m}\frac{\tausssi}{\tweight^{-\half}-\tweight^{\half}}\right|_{\mathcal{P}^{G}}\right)
\]

\subsection{Vanisihing of the contribution of pairs supported on thickening of
$S$ in $X$}

In this subsection we will prove that under the assumption that all
curve that pass through all the $m$ points are reduced and irreducible,
the contribution to the invariants $\pxlmn$ and $\bpxlmn$ of curves
not supported on $S$ is zero.

Proposition 2.1 of \cite{KST:11} tells us that if $\LL$ is a $2\delta+1$-very
ample line bundle on $S$ then the $\delta$-dimensional general sublinear
system $\PP^{\delta}\subset|\LL|$ only contain reduced curves. Proposition
5.1 of \cite{KT:14} also implies that these curves are also irreducible.
Thus our assumption that all curves passing through all $m$ points
are reduced and irreducible is more likely to happen. If for all $s_{i}$,
$\OO_{s_{i}}$ are in the same class, our assumption does not depend
on a particular set of $s_{i}$ but only on the number of points.

First we work for $\pxlmn$.

Let $\bar{\pi}^{\PPP}:\PX\times\bar{X}\rightarrow\PX\times S$ be
the pullback of $\bar{\pi}$ and let $i:\CC\rightarrow\PX\times\bar{X}$
be the closed embedding of the universal curve. As the composition
of projective morphisms is projective then the composition $\bar{\pi}^{\mathcal{P}}\circ i$
is also projective. Notice the above composition equals to the composition
$\CC\rightarrow\PX\times X\rightarrow\PX\times S$ which is affine.
Thus we can conclude that $\bar{\pi}^{\mathcal{P}}\circ i$ is a finite
morphism. We denote this morphism by $\rho$.

Recall the morphism $\div:\PX\rightarrow|\LL|$ from section \ref{sec:Kool-Thomas-invariants}
that maps the stable pairs $\left(\FF,s\right)$ to the supporting
curve $C_{\FF}\in|\LL|$ of $\FF$. Let $\DD\subset|\LL|\times S$
be the universal divisor and let $\DD_{\mathcal{P}}\subset\mathcal{P}\times S$
be the family of divisors that correspond to the morphism $\div:\PX\rightarrow|\LL|$
and let $j:\DD_{\mathcal{P}}\rightarrow\mathcal{P}\times S$ be the
closed embedding. Equivalently $\DD_{\PPP}=\div^{-1}\DD$.
\selectlanguage{english}%
\begin{lem}
\label{rho}$\rho$ factors through $j$.
\end{lem}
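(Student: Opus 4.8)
The plan is to show that the finite morphism $\rho\colon\CC\to\PX\times S$ factors (scheme-theoretically) through the closed subscheme $j\colon\DD_{\PPP}\hookrightarrow\PX\times S$. Since $\DD_{\PPP}=\div^{-1}\DD$ is cut out by an ideal sheaf $\III_{\DD_{\PPP}}\subset\OO_{\PX\times S}$, by the universal property of closed immersions it suffices to prove that the pullback $\rho^{*}\III_{\DD_{\PPP}}\to\OO_{\CC}$ is the zero map, equivalently that the composite $\III_{\DD_{\PPP}}\to\rho_{*}\OO_{\CC}$ vanishes. First I would recall how $\DD_{\PPP}$ is defined: $\DD\subset|\LL|\times S$ is the universal divisor and $\div\colon\PX\to|\LL|$ sends a pair $(\FF,s)$ to the support divisor of $\pi_{*}\FF$, so $\DD_{\PPP}=(\div\times\id_{S})^{-1}\DD$ has ideal sheaf pulled back from the Cartier divisor $\DD$; fiberwise over a point $(\FF,s)\in\PX$ the subscheme $\DD_{\PPP}\cap(\{(\FF,s)\}\times S)$ is exactly the divisor $\div(\pi_{*}\FF)\subset S$ supporting $\pi_{*}\FF$.

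The key geometric input is the identification of $\rho$. We factored $\rho$ as $\CC\xrightarrow{i}\PX\times\bar X\xrightarrow{\bar\pi^{\PPP}}\PX\times S$, and observed this agrees with $\CC\hookrightarrow\PX\times X\xrightarrow{\id\times\pi}\PX\times S$; thus, fiberwise over $(\FF,s)\in\PX$, $\rho$ restricted to the fiber is the composite $C_{\FF}\hookrightarrow X\xrightarrow{\pi}S$, whose scheme-theoretic image is precisely the divisor $\div(\pi_{*}\FF)$ supporting $\pi_{*}\FF$ on $S$ (this is the definition of $\div$, together with the fact that $\pi$ restricted to the finite scheme $C_{\FF}$ over $S$ pushes $\OO_{C_{\FF}}$ to a sheaf whose support divisor is $\div(\pi_{*}\FF)$). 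So set-theoretically the image of $\rho$ lands in $|\DD_{\PPP}|$; the content is to upgrade this to a scheme-theoretic statement over the whole base $\PX$, and not just fiber by fiber.

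The cleanest way I would carry this out: work with the relative version over $\PX$. Since $\rho=\hat p\vert_{\CC}$-compatibly, $\rho_{*}\OO_{\CC}$ is a coherent $\OO_{\PX\times S}$-module, flat over $\PX$ (because $\CC\to\PX$ is flat of relative dimension one after the finite pushforward to $\PX\times S$ — here one uses that the universal sheaf $\FFF$, hence $\OO_{\CC_{\FFF}}$, is flat over $\PX$), and the $0$-th Fitting ideal $\mathrm{Fitt}_{0}(\rho_{*}\OO_{\CC})$ defines a closed subscheme of $\PX\times S$ which, by compatibility of Fitting ideals with base change along $\{(\FF,s)\}\times S\hookrightarrow\PX\times S$, has fibers equal to $\div(\pi_{*}\FF)$; since $\div$ was \emph{defined} via exactly this Fitting-ideal / divisor construction applied to the universal sheaf, $\mathrm{Fitt}_{0}(\rho_{*}\OO_{\CC})=\III_{\DD_{\PPP}}$. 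Now $\mathrm{Fitt}_{0}$ of a module always annihilates that module, so $\III_{\DD_{\PPP}}\cdot\rho_{*}\OO_{\CC}=0$, which is exactly the statement that $\III_{\DD_{\PPP}}\to\rho_{*}\OO_{\CC}$ is zero, i.e. $\rho$ factors through $j$.

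The main obstacle is making the comparison $\mathrm{Fitt}_{0}(\rho_{*}\OO_{\CC})=\III_{\DD_{\PPP}}$ precise rather than merely fiberwise: one needs that $\div\colon\PX\to|\LL|$ is constructed so that $(\div\times\id_{S})^{-1}\DD$ \emph{is} the Fitting-ideal subscheme of the relative support of $\pi_{*}\FFF$ (this is the definition used in \cite{KT:14}, via the line bundle $\OO(\DD)$ on $|\LL|\times S$ and the corresponding section vanishing on $\DD_{\PPP}$), together with flatness of $\rho_{*}\OO_{\CC}$ over $\PX$ to guarantee that Fitting ideals commute with the (possibly non-flat) base change to fibers; both are available from the setup above, so the argument should go through without further hypotheses.
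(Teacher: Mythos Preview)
Your approach via Fitting ideals is genuinely different from the paper's and, once patched, cleaner. The paper does not work globally: it uses that $\III_{\DD_\PPP}$ is flat over $\PX$ and Nakayama to reduce the vanishing of $\III_{\DD_\PPP}\to\rho_*\OO_\CC$ to a fiberwise check; then on each fiber it shows by hand that the scheme-theoretic image $W$ of $C_\FF\to S$ is a Cartier divisor (analysing the primary decomposition of its ideal via the Cohen--Macaulayness of $\pi_*\OO_{C_\FF}$) and finally uses the additivity of $\div$ on exact sequences from \cite{FP:00} to conclude $W\subseteq\div\FF$. Your argument replaces all of that local commutative algebra with the single fact that $\mathrm{Fitt}_0$ annihilates the module.

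There is, however, a real gap in your identification $\mathrm{Fitt}_0(\rho_*\OO_\CC)=\III_{\DD_\PPP}$. The morphism $\div$ is constructed from the universal \emph{sheaf} $\FFF$, so what one actually has is $\III_{\DD_\PPP}=\mathrm{Fitt}_0\bigl((\id\times\pi)_*\FFF\bigr)$, not $\mathrm{Fitt}_0(\rho_*\OO_\CC)$. Your proposed bridge (``fiberwise agreement plus base-change compatibility of Fitting ideals'') does not close this: both ideals restrict to the same thing on every fiber, but over a possibly non-reduced $\PX$ that does not force global equality. The clean repair is to drop the equality and prove only the inclusion you need. For a stable pair one has $\ker s=\mathrm{Ann}\,\FF$ (if $a\in\ker s$ then multiplication by $a$ on $\FF$ factors through the $0$-dimensional cokernel into the pure sheaf $\FF$, hence vanishes), so the universal section gives $\OO_\CC\hookrightarrow\FFF$ and thus $\rho_*\OO_\CC\hookrightarrow(\id\times\pi)_*\FFF$. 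Then
\[
\III_{\DD_\PPP}=\mathrm{Fitt}_0\bigl((\id\times\pi)_*\FFF\bigr)\subseteq\mathrm{Ann}\bigl((\id\times\pi)_*\FFF\bigr)\subseteq\mathrm{Ann}(\rho_*\OO_\CC),
\]
which is exactly the vanishing required for $\rho$ to factor through $j$.
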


\selectlanguage{american}%
\begin{proof}
The ideal $I$ in $\OO_{\PX\times S}$ corresponding to the divisor
$\DD_{\PPP}$ is flat over $\PX$ and $\rho$ factorize through $j$
if the composition $I\rightarrow\OO_{\PX\times S}\rightarrow\rho_{*}\OO_{\CC}$
is zero. By Nakayama's Lemma it is sufficient to check whether the
composition is zero for each $p\in\PX.$ Or equivalently, we can check
whether $\rho$ factorize through $j$ at each point $p\in\PX$ .

Let $\rho^{p}:\CC_{p}\rightarrow\{p\}\times S=S$ be the restriction
of $\rho$ to the point $p\in\PX$ and let $W\subset S$ be the scheme
theoretic support of $\rho_{*}^{p}\OO_{\CC_{p}}$. Notice that $|W|=\supp(\rho_{*}\OO_{\CC_{p}})$
is a curve. We claim that $W$ is a Cartier Divisor. We will show
that $W$ is a subscheme of $\div\,\FF=\div\,\rho_{*}\OO_{\CC_{p}}$
so that $\rho^{p}$ factorize through $j^{p}$. Let $\sigma:\OO_{S}\rightarrow\rho_{*}^{p}\OO_{\CC_{p}}$
be the morphism of sheaves corresponding to the morphism $\rho^{p}:\CC_{p}\rightarrow S$.
Then $\OO_{W}$ is the image of $\sigma$ so that we have an injection
$\OO_{W}\rightarrow\rho_{*}^{p}\OO_{\CC_{p}}\rightarrow\rho_{*}^{p}\FF_{p}.$
By Proposition 1 of \cite{FP:00} we have $\div\,\rho_{*}^{p}\FF_{p}=\div\,\OO_{W}+D$
where $D$ is some effective divisor. Since $W$ is a Cartier divisor
then $\div\,\OO_{W}=W$. So that we can conclude that $W$ is a subscheme
of $\div\,\FF$.

It remains to show that $W$ is a Cartier divisor. Let $I\subset\OO_{S}$
be the ideal sheaf of $W$. It is sufficient to show that $I_{x}$
is a free $\OO_{S,x}$-module of rank 1 for every $x\in X$. For $U=S\setminus W$,
the inclusion $I\subset\OO_{S}$ is an isomorphism so that if $x\notin W,$
$I_{x}$ is isomorphic to $\OO_{S,x}$. Since $S$ is nonsingular
$\OO_{S,x}$ is a domain so that it is sufficient to show that $I_{x}$
is generated by one element $f\in\OO_{S,x}$.

Note that the morphism $\rho:\CC_{p}\rightarrow S$ is a finite morphism
so that $\left(\rho_{*}^{p}\OO_{\CC_{p}}\right)_{x}$ is a finitely
generated $\OO_{S,x}$-module. In particular, $\left(\rho_{*}^{p}\OO_{\CC_{p}}\right)_{x}$
is a Cohen-Macaulay $\OO_{S,x}$-module. By Proposition IV.13 of \cite{Ser:00},
any prime $\textbf{\ensuremath{\mathfrak{p}}}\subset\OO_{S,x}$ such
that $\OO_{S,x}/\mathbf{\mathfrak{p}}$ is isomorphic to a submodule
of $\left(\rho_{*}^{p}\OO_{\CC_{p}}\right)_{x}$ must be generated
by a single irreducible element $g\in\OO_{S,x}.$ There are finitely
many of such $\mathfrak{p}$ and we denote them by $\mathfrak{p}_{1},\ldots,\text{\ensuremath{\mathfrak{p}_{k}}}$.
Let $g_{i}$ generate $\mathfrak{p}_{i}$. By Proposition IV.11 of
\cite{Ser:00}, $I_{x}$ is the intersection $\bigcap_{i=1}^{k}\mathbf{\mathfrak{q}}_{i}$
where $\mathfrak{q}_{i}$ is an ideal of $\OO_{S,x}$ such that $\mathfrak{p}_{i}^{n_{i}}\subset\mathfrak{\mathfrak{q}}_{i}\subset\mathbf{\mathfrak{p}}_{i}$
for some positive integer $n_{i}$. Since $\OO_{S,x}$ is a domain,
$\mathfrak{q}_{i}$ must be generated by a single element $g_{i}^{m_{i}}$
for some positive integer $m_{i}$. Thus we conclude that $I_{x}$
is generated by a single element $\prod_{i=1}^{k}g_{i}^{m_{i}}$.
\end{proof}
Let $R\subset\PX^{G}$ be a connected component different from $\PS$.
We denote the inclusion $R\subset\PX$ by $\iota$. For every $(F,s)\in R$
the supporting curve $C\subset X$ is not supported by $S$ but $F$
is supported on an infinitesimal thickening of $S$ in $X$. So we
have the following diagram where all square are Cartesian\begin{equation}\label{diagR}
\begin{tikzcd}
\CC_{R}\ar[d,"i^R"']\ar[r] & \CC\ar[d,"i"']&\\
R\times\bar{X}\ar[r,"\iota_{\bar{X}}"]\ar[d,"\bar{\pi}^R"'] & \PX\times\bar{X}\ar[r,"\bar{q}"]\ar[d,"\bar{\pi}^{\mathcal{P}}"'] & \bar{X}\ar[d,"\bar{\pi}"]\\
R\times S\ar[d,"\hat{p}^R"']\ar[r,"\iota_S"] & \PX\times S\ar[d,"\hat{p}"']\ar[r,"q_S"] & S\\
R\ar[r,"\iota"]&\PX&.
\end{tikzcd}
\end{equation}By base change formula \ref{prop:BaseChange} and projection formula
\ref{prop:=00005BProjection-Formula=00005D} we have
\begin{align}
\iota^{*}\taus & =\left(\hat{p}^{R}\circ\bar{\pi}^{R}\right)_{*}\left(\iota_{\bar{X}}^{*}[\OO_{\CC}].\iota_{\bar{X}}^{*}\bar{q}^{*}\bar{\pi}^{*}\left[\OO_{s}\right]\right)\nonumber \\
 & =\hat{p}_{x*}\left(\bar{\pi}_{*}^{R}\left[\OO_{\CC_{R}}\right].\iota_{S}^{*}q_{S}^{*}\left[\OO_{s}\right]\right).\nonumber \\
\label{eq:taus}
\end{align}

Now we restrict $\rho$ from \ref{rho} to $R\subset\PX$. By the
Lemma \ref{rho} we can write $\rho^{R}$ as the composition $j^{R}\circ\lambda^{R}$.
So now we have the following diagram

\[
\xymatrix{\CC_{R}\ar[r]^{\lambda^{R}} & \DD_{R}\ar[r]^{j^{R}} & R\times S\ar[d]_{\hat{p}^{R}}\ar[r]^{\,\,\,\,\,\,q_{S}\circ\iota_{S}} & S\\
 &  & R
}
\]

By Proposition \ref{prop:resolution} the subcategory of flat coherent
sheaves on $\DD_{R}$ satisfies all conditions in Lemma \ref{lem4}
so that by Corollary \ref{Cor1} we have a group homomorphism $\lambda_{*}^{R}:K^{G}(\CC_{R})\rightarrow K^{G}\left(\DD_{R}\right)$
that maps $\left[\FF\right]$ to $\chi\left(\lambda_{*}^{R}\FF\right)$.
By the same argument we can conclude the existence of the group homomorphism
$j_{*}^{R}:K^{G}\left(\DD_{R}\right)\rightarrow K^{G}(R\times S)$.

Recall the definition of the ring homomorphism $\kappa:K^{G}(Y)\rightarrow\lim K(Y_{l})$
from Section \ref{sec:Equivariant--theory}. Although we have not
proved that $\pi_{*}^{R}\circ i_{*}^{R}[\OO_{\CC}]=j_{*}^{R}\circ\lambda_{*}^{R}[\OO_{\CC}]$,
by Lemma \ref{lem:kappa} we still have $\kappa_{R\times S}\circ\pi_{*}^{R}\circ i_{*}^{R}=\kappa_{R\times S}\circ j_{*}^{R}\circ\lambda_{*}^{R}$.
\selectlanguage{english}%
\begin{lem}
\label{lem:kappa-1}
\begin{align*}
\kappa_{R}\left(\left.\taus\right|_{R}\right) & :=\kappa_{R}\left(\hat{p}_{*}^{R}\left(\bar{\pi}_{*}^{R}\circ i_{*}^{R}[\OO_{\CC_{R}}]\otimes\iota_{S}^{*}q_{S}^{*}\left[\OO_{s}\right]\right)\right)\\
 & \,=\kappa_{R}\left(\hat{p}_{*}^{R}\left(\left(j_{*}^{R}\circ\lambda_{*}^{R}[\OO_{\CC}]\right)\otimes\iota_{S}^{*}q_{S}^{*}\left[\OO_{s}\right]\right)\right)
\end{align*}
\end{lem}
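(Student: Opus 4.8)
The plan is to derive the claim by pushing the identity
\[
\kappa_{R\times S}\circ\bar{\pi}_{*}^{R}\circ i_{*}^{R}=\kappa_{R\times S}\circ j_{*}^{R}\circ\lambda_{*}^{R}\colon K^{G}(\CC_{R})\longrightarrow\Lim K\bigl((R\times S)_{l}\bigr)
\]
recorded immediately above (which comes from Lemma \ref{lem:kappa} together with the two factorizations $\bar{\pi}^{R}\circ i^{R}$, $j^{R}\circ\lambda^{R}$ of the finite morphism $\rho^{R}$ supplied by Lemma \ref{rho}) down along $\hat{p}^{R}$. First I would dispose of the notational point: the symbols $[\OO_{\CC}]$ and $[\OO_{\CC_{R}}]$ in the statement refer, after the evident restriction, to the same element of $K^{G}(\CC_{R})$, since the top squares of diagram \ref{diagR} are cartesian and the last assertion of Proposition \ref{prop:BaseChange}(2) gives $\iota_{\bar{X}}^{*}[\OO_{\CC}]=[\OO_{\CC_{R}}]$; it is this class that $\lambda_{*}^{R}$ is applied to.

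The only step that still requires an argument is the existence of a pushforward $\hat{p}_{*}^{R}$ along $\hat{p}^{R}\colon R\times S\to R$ together with a compatible limit pushforward $\overleftarrow{\hat{p}_{*}^{R}}\colon\Lim K\bigl((R\times S)_{l}\bigr)\to\Lim K(R_{l})$ satisfying $\overleftarrow{\hat{p}_{*}^{R}}\circ\kappa_{R\times S}=\kappa_{R}\circ\hat{p}_{*}^{R}$. For this I would use that $S$ is smooth and projective to factor $\hat{p}^{R}$ as a closed embedding $R\times S\hookrightarrow R\times\PP^{N}$ followed by the projective-bundle projection $R\times\PP^{N}\to R$: the first factor fits the hypotheses of Corollary \ref{Cor1} by Proposition \ref{prop:resolution} applied to $R\times\PP^{N}\to R$, the second is of projective-bundle type, and so parts (1)--(3) of Lemma \ref{lem:kappa} furnish $\overleftarrow{\hat{p}_{*}^{R}}$ and its $\kappa$-compatibility, while Proposition \ref{prop:BaseChange}(2) gives well-definedness of $\hat{p}_{*}^{R}$ itself.

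The argument is then a two-step propagation of the recorded identity. Evaluating it at $[\OO_{\CC_{R}}]$ gives $\kappa_{R\times S}\bigl(\bar{\pi}_{*}^{R}i_{*}^{R}[\OO_{\CC_{R}}]\bigr)=\kappa_{R\times S}\bigl(j_{*}^{R}\lambda_{*}^{R}[\OO_{\CC_{R}}]\bigr)$; as $\kappa_{R\times S}$ is a ring homomorphism, multiplying both sides by $\kappa_{R\times S}\bigl(\iota_{S}^{*}q_{S}^{*}[\OO_{s}]\bigr)$ yields
\[
\kappa_{R\times S}\bigl(\bar{\pi}_{*}^{R}i_{*}^{R}[\OO_{\CC_{R}}]\otimes\iota_{S}^{*}q_{S}^{*}[\OO_{s}]\bigr)=\kappa_{R\times S}\bigl(j_{*}^{R}\lambda_{*}^{R}[\OO_{\CC_{R}}]\otimes\iota_{S}^{*}q_{S}^{*}[\OO_{s}]\bigr),
\]
and applying $\overleftarrow{\hat{p}_{*}^{R}}$ to both sides, using the compatibility just established, gives precisely the asserted equality in $\Lim K(R_{l})$.

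Conceptually there is no hard step: all the substance lies in the recorded identity — in particular in Lemma \ref{lem:kappa}(3), which says the limit pushforward is independent of the chosen factorization of $\rho^{R}$ — and in the $\kappa$-naturality of projective-bundle and finite pushforwards. The only real care is in checking that the morphisms in play meet the hypotheses of Corollary \ref{Cor1} and Lemma \ref{lem:kappa}, above all the flatness over $R$ (equivalently over $\PX$) of the sheaves being pushed forward; here one invokes the standing assumption that $\OO_{\CC_{\FFF}}$ is flat over $\PX$ and the flatness of $\DD_{\PPP}$ over $\PX$ established inside the proof of Lemma \ref{rho}.
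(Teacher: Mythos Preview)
Your proposal is correct and follows exactly the route the paper intends. The paper does not give a separate proof of this lemma: it records the identity $\kappa_{R\times S}\circ\bar{\pi}_{*}^{R}\circ i_{*}^{R}=\kappa_{R\times S}\circ j_{*}^{R}\circ\lambda_{*}^{R}$ (from Lemma~\ref{lem:kappa}) in the sentence immediately preceding the statement and leaves the passage from there to the lemma as implicit; your argument---multiplying by $\kappa_{R\times S}(\iota_{S}^{*}q_{S}^{*}[\OO_{s}])$ using that $\kappa$ is a ring map, then applying $\overleftarrow{\hat{p}_{*}^{R}}$ and its $\kappa$-compatibility from Lemma~\ref{lem:kappa}---is precisely the filling-in of that implicit step.
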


We will use $\left.\tauss\right|_{R}$ to denote $\hat{p}_{*}^{R}\left(\left(j_{*}^{R}\circ\lambda_{*}^{R}[\OO_{\CC_{R}}]\right)\otimes\iota_{S}^{*}q_{S}^{*}\left[\OO_{s}\right]\right)$
and $\left[\OO_{\CC_{R}}\right]$ to denote $\lambda_{*}[\OO_{\CC_{R}}]$.
\begin{lem}
\begin{align*}
R\Gamma\left(R,\frac{\OO_{R}^{vir}\otimes K_{vir}^{\half}\vert_{R}}{\bigwedge^{\bullet}\left(N_{vir}^{\bullet}\right)^{\vee}}\left.\prod_{i=1}^{m}\gamma\left(\OO_{s_{i}}\right)\right|_{R}\right) & =R\Gamma\left(R,\frac{\OO_{R}^{vir}\otimes\left.K_{vir}^{\half}\right|_{R}}{\bigwedge^{\bullet}\left(N_{vir}^{\bullet}\right)^{\vee}}\left.\prod_{i=1}^{m}\taussi\right|_{R}\right)
\end{align*}
\end{lem}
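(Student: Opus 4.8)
The plan is to reduce the asserted equality in $\QQ(\tweight^{\half})$ to the identities $\kappa_{R}(\tausi|_{R})=\kappa_{R}(\taussi|_{R})$ for $i=1,\dots,m$, which Lemma \ref{lem:kappa-1} supplies: its proof invokes only the relation $\kappa_{R\times S}\circ\pi_{*}^{R}\circ i_{*}^{R}=\kappa_{R\times S}\circ j_{*}^{R}\circ\lambda_{*}^{R}$ coming from Lemma \ref{lem:kappa}, and that relation does not depend on the chosen closed point, so the same argument applies to each $s_{i}$. The point is that we do \emph{not} know $\tausi|_{R}=\taussi|_{R}$ in $K$-theory (only after $\kappa$), so one has to route the argument through $\Lim K$ and exploit that the global sections functor is a pushforward.

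First I would note that $R$, being a connected component of the $\CS$-fixed locus of $\PX$, is proper over $\CCC$ (the $\CS$-fixed locus of the stable pair moduli space of a local surface is compact), so that $R\Gamma(R,-)$ is defined and the two sides of the claim are $(\hat{p}_{R})_{*}A$ and $(\hat{p}_{R})_{*}B$, where $\hat{p}_{R}\colon R\to\text{pt}$ is the structure morphism, $A=\bigl(\OO_{R}^{vir}\otimes K_{vir}^{\half}|_{R}\bigr)\big/\bigwedge^{\bullet}(N^{vir})^{\vee}\cdot\prod_{i=1}^{m}\tausi|_{R}$, and $B$ is the same expression with $\taussi|_{R}$ in place of $\tausi|_{R}$. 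Both classes live in the localization of $G^{G}(R)$ obtained by inverting $\bigwedge^{\bullet}(N^{vir})^{\vee}$ (a unit once the equivariant parameter is inverted, as in formula (\ref{eq:vir-loc})) and adjoining $\tweight^{\half}$. Factoring $\hat{p}_{R}$ as a $G$-equivariant closed embedding $R\hookrightarrow\PP^{N}$ — available because $R$ is projective with the trivial $G$-action — followed by the structure map $\PP^{N}\to\text{pt}$ of the trivial projective bundle, part $3$ of Lemma \ref{lem:kappa} applies (the closed embedding is finite and satisfies condition $1$ of Corollary \ref{Cor1} since $\PP^{N}$ is smooth) and gives $\kappa_{\text{pt}}\circ(\hat{p}_{R})_{*}=\overleftarrow{(\hat{p}_{R})_{*}}\circ\kappa_{R}$; by linearity over the coefficient ring $\ZZ[\tweight^{\pm\half}]$ of both the pushforward (projection formula, Proposition \ref{prop:=00005BProjection-Formula=00005D}) and $\kappa$, this identity passes to the localizations above.

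Since $\kappa_{R}$ is a ring homomorphism and $A,B$ differ only by replacing each factor $\tausi|_{R}$ with $\taussi|_{R}$, the equalities of Lemma \ref{lem:kappa-1} give $\kappa_{R}(A)=\kappa_{R}(B)$, whence $\kappa_{\text{pt}}\bigl((\hat{p}_{R})_{*}A\bigr)=\kappa_{\text{pt}}\bigl((\hat{p}_{R})_{*}B\bigr)$. Finally $\kappa_{\text{pt}}$ is injective: on $K^{G}(\text{pt})=\ZZ[\tweight^{\pm\half}]$ it sends $\tweight^{\half}$ to the class of a degree $\pm1$ line bundle on $\PP^{l}$, which under $\Lim K(\PP^{l})\cong\ZZ[[x]]$ (with $x=1-[\OO_{\PP^{l}}(-1)]$) is the unit $(1-x)^{\mp1}$ of infinite multiplicative order, so $\ZZ[\tweight^{\pm\half}]\to\Lim K(\PP^{l})$ is injective and remains so after $\otimes_{\ZZ}\QQ$ and passage to the fraction field. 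Hence $(\hat{p}_{R})_{*}A=(\hat{p}_{R})_{*}B$ in $\QQ(\tweight^{\half})$, which is the assertion.

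The only real difficulty I anticipate is the bookkeeping in the localized, $G$-theoretic setting: one must check that inverting $\bigwedge^{\bullet}(N^{vir})^{\vee}$ and extending scalars to $\QQ(\tweight^{\half})$ are compatible with all the maps above, and that working with the virtual structure sheaf $\OO_{R}^{vir}$ — a priori only a class in $G^{G}(R)$ rather than $K^{G}(R)$ — causes no trouble. The latter point is harmless because $G$ acts trivially on $R$: then $G^{G}(R)=G(R)\otimes_{\ZZ}\ZZ[\tweight^{\half},\tweight^{-\half}]$ and carries exactly the functoriality — flat pullback along $R\times U_{l}\to R$ and proper pushforward to a point — used to construct $\kappa$ and $(\hat{p}_{R})_{*}$ on $K$-theory, so Lemmas \ref{lem:kappa} and \ref{lem:kappa-1} transport verbatim.
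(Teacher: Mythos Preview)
Your approach is correct and hits the same key point as the paper --- namely that Lemma \ref{lem:kappa-1} gives $\kappa_{R}(\tausi|_{R})=\kappa_{R}(\taussi|_{R})$, and one then only needs an injectivity statement over the base to conclude. The difference is in how that injectivity is obtained. The paper applies the equivariant Chern character $ch^{G}\colon\QQ(\tweight^{\half})\to\QQ((t))$, $\tweight^{\half}\mapsto e^{\half t}$, to both sides: by the virtual Riemann--Roch theorem of Fantechi--G\"ottsche this turns $R\Gamma$ into an integral over $[R]^{vir}$, and by the commutative square relating $ch^{G}$ with $\widehat{ch}\circ\kappa$ (together with Lemma \ref{chern}) the $\kappa$-identity of Lemma \ref{lem:kappa-1} forces the Chern characters of the two integrands to agree; injectivity of $ch^{G}$ on $\QQ(\tweight^{\half})$ then finishes. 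You instead stay entirely in $K$-theory: push $\kappa_{R}$ to $\kappa_{\text{pt}}$ via Lemma \ref{lem:kappa} and use injectivity of $\kappa_{\text{pt}}$ directly.

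Your route is arguably more economical, since it avoids the passage through cohomology and virtual Riemann--Roch. The price you pay is that Lemma \ref{lem:kappa} is stated in the paper only for $K^{G}$, while you need it (and the module structure under $\kappa_{R}$) for classes in the localized $G^{G}(R)$; you do address this, and the trivial $G$-action on $R$ indeed makes the extension harmless, but it is an extra verification. The paper's route sidesteps this by converting everything to Chow integrals first, where the relevant functorialities are already packaged into virtual Riemann--Roch.
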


\selectlanguage{american}%
\begin{proof}
The Chern character map $ch^{G}:\QQ(\tweight^{\half})\rightarrow\QQ((t))$,
$\tweight^{\half}\mapsto e^{\half t}$ where $t$ is the equivariant
first Chern class of $\tweight$ is an injection since $e^{\half t}$
is invertible in $\QQ((t))$.\foreignlanguage{english}{ By virtual
Riemann-Roch theorem of \cite{FG:10}, Lemma \ref{chern} and Lemma
\ref{lem:kappa-1} we have 
\begin{align*}
ch^{G}R\Gamma\left(R,\frac{\OO_{R}^{vir}\otimes K_{vir}^{\half}\vert_{R}}{\wedge\left(N_{vir}^{\bullet}\right)^{\vee}}\left.\prod_{i=1}^{m}\gamma\left(\OO_{s_{i}}\right)\right|_{R}\right) & =ch^{G}R\Gamma\left(R,\frac{\OO_{R}^{vir}\otimes\left.K_{vir}^{\half}\right|_{R}}{\bigwedge^{\bullet}\left(N_{vir}^{\bullet}\right)^{\vee}}\left.\prod_{i=1}^{m}\taussi\right|_{R}\right).
\end{align*}
 The injectivity of $ch^{G}:\QQ(\tweight^{\half})\rightarrow\QQ((t))$
implies the lemma.}
\end{proof}
\selectlanguage{english}%
\selectlanguage{american}%
The above lemma also holds if we replace $\left.K_{vir}^{\half}\right|_{R}$
by any class $\alpha\in K^{G}(R)$.

By the above lemma we can replace $\taus$ with $\tauss=\hat{p}_{*}\left(\rho_{*}\left[\OO_{\CC}\right].q_{S}^{*}\left[\OO_{s}\right]\right)$.
The advantage of using $\tauss$ will become clear later.
\begin{lem}
Let $\LL$ be a globally generated line bundle on $S$. Let $\dim\,|\LL|=n$
and $\DD\subset|\LL|\times S$ be the universal divisor. Then for
any point $s\in S$ the fiber product $\DD\times_{|\LL|\times S}\left(|\LL|\times\{s\}\right)$
is a hyperplane $\PP^{n-1}\subset|\LL|\times\{s\}$.
\end{lem}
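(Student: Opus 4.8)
The plan is to realize $\DD$ as the zero scheme of a single tautological section of a line bundle on $|\LL|\times S$, restrict that section to the slice $|\LL|\times\{s\}$, and identify the restriction with a nonzero linear form on $|\LL|$. Write $V:=H^{0}(S,\LL)$, so that $|\LL|=\PP(V)$ with $\dim V=n+1$. Global generation of $\LL$ says precisely that the evaluation map $V\otimes_{\CCC}\OO_{S}\to\LL$ is surjective. Pulling this map back to $|\LL|\times S$ and precomposing with the pullback of the tautological inclusion $\OO_{\PP(V)}(-1)\hookrightarrow V\otimes_{\CCC}\OO_{\PP(V)}$ produces a morphism $\OO(-1)\boxtimes\OO_{S}\to\OO_{|\LL|}\boxtimes\LL$, i.e.\ a section $\tau\in H^{0}\bigl(|\LL|\times S,\,\OO(1)\boxtimes\LL\bigr)$. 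Over a slice $\{[\sigma]\}\times S$ (for $0\neq\sigma\in V$) this $\tau$ restricts to the section $\sigma$ of $\LL$ itself, so its zero scheme is fibrewise the divisor of $\sigma$; hence the zero scheme of $\tau$ is exactly the universal divisor $\DD$. I would include this fibrewise check in one line, as it is the only place the precise definition of $\DD$ intervenes.

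Next I would invoke the elementary fact that taking the zero scheme of a section of a line bundle commutes with base change: for the closed immersion $\iota_{s}\colon|\LL|\times\{s\}\hookrightarrow|\LL|\times S$ one has $\DD\times_{|\LL|\times S}\bigl(|\LL|\times\{s\}\bigr)$ equal to the zero scheme of $\iota_{s}^{*}\tau$. Now $\iota_{s}^{*}\tau$ is a section of $\iota_{s}^{*}\bigl(\OO(1)\boxtimes\LL\bigr)\cong\OO_{\PP(V)}(1)\otimes_{\CCC}\LL_{s}$, where $\LL_{s}$ is the one--dimensional fiber of $\LL$ at $s$; after choosing a trivialization $\LL_{s}\cong\CCC$ it becomes the section of $\OO_{\PP(V)}(1)$ attached to the linear functional $\mathrm{ev}_{s}\colon V\to\CCC$, $\sigma\mapsto\sigma(s)$. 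Since $\LL$ is globally generated, $\mathrm{ev}_{s}$ is surjective, hence a nonzero element of $V^{\vee}$, so its zero scheme in $\PP(V)$ is a linearly embedded reduced hyperplane $\PP^{n-1}\subset|\LL|\times\{s\}\cong\PP^{n}$, which is the assertion.

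I expect the one step that needs genuine care to be the identification in the second paragraph: checking that $\iota_{s}^{*}\tau$ really is the evaluation form $\sigma\mapsto\sigma(s)$ up to the chosen trivialization, and that the scheme structure coming from the fiber product agrees with that of the hyperplane cut out by this form. The construction of $\tau$, the base-change compatibility of zero schemes, and the statement that a nonzero linear form on $\PP^{n}$ vanishes on a reduced $\PP^{n-1}$ are all formal.
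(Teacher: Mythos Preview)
Your proposal is correct and follows essentially the same approach as the paper: both realize $\DD$ as the zero scheme of the tautological section of $\OO(1)\boxtimes\LL$ (equivalently, via the injection $\LL^{\vee}\boxtimes\OO(-1)\hookrightarrow\OO$), restrict to the slice $|\LL|\times\{s\}$, and use global generation to see that the restricted section is nonzero (equivalently, the restricted map remains injective), forcing the fiber to be a hyperplane. Your write-up is somewhat more streamlined, phrasing things in terms of the evaluation functional $\mathrm{ev}_{s}\in V^{\vee}$ rather than the paper's explicit coordinate computation with the $e_i^{\vee}$, but the underlying argument is the same.
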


\begin{proof}
Let $\LL$ be globally generated line bundle on $S$ and let $f:S\rightarrow Spec\,\CCC$
be the structure morphism. Then $S\times|\LL|=Proj\,\left(\sym f^{*}\left(f_{*}\LL\right)^{\vee}\right)$
and the canonical morphism $\xi:f^{*}f_{*}\LL\rightarrow\LL$ is surjective.
Let $\xi^{\vee}:\LL^{\vee}\rightarrow f^{*}\left(f_{*}\LL\right)^{\vee}$
be the dual of $\xi$. Let $e_{i}$ be the basis of $f_{*}\LL$ and
let $e_{i}^{\vee}\in\left(f_{*}\LL\right)^{\vee}$ defined as $e_{i}^{\vee}(e_{j})=1$
if $i=j$ and $0$ if $i\neq j$. Then $\xi^{\vee}$ sends a local
section $\psi$ of $\LL^{\vee}$ to $\xi^{\vee}(\psi):\sum_{i}a_{i}e_{i}\mapsto a_{i}\psi\left(e_{i}\right)e_{i}^{\vee}$.

Sections of $f^{*}\left(f_{*}\LL\right)^{\vee}$ are linear combinations
$v$ of $\{e_{i}^{\vee}\}$ with coefficient in $\OO_{S}$ and sections
of $\sym f^{*}(f_{*}\LL)^{\vee}$ are polynomials $P$ in $\{e_{i}^{\vee}\}$
with coefficient in $\OO_{S}$. There is a canonical graded morphism
$\phi:f^{*}(f_{*}\LL)^{\vee}\otimes\sym f^{*}(f_{*}\LL)^{\vee}(-1)\rightarrow\sym f^{*}(f_{*}\LL)^{\vee}$,
that sends $v\otimes P$ to the products of the polynomials $v.P$.
The composition of $\xi^{\vee}\otimes\id_{\sym f^{*}\left(f_{*}\LL\right)^{\vee}(-1)}$
with $\phi$ sends $\psi\otimes P$ to $\ensuremath{\xi^{\vee}(\psi).P}$.
Let $\theta$ be this composition. This composition is injective since
$\xi^{\vee}$is injective. This composition correspond to the morphism
$\sigma:\LL^{\vee}\boxtimes\OO(-1)\rightarrow\OO$ on $S\times|\LL|$
which is injective because $\theta$ is injective and $Proj$ construction
preserve injective morphism. The cokernel $\sigma$ is the structure
sheaf of the universal divisor $\DD\subset S\times|\LL|$.

For any closed point $s\in S$, we want to show that the restriction
of $\sigma$ to $|\LL|$ is still injective. In this case $\DD\times_{|\LL|\times S}\left(|\LL|\times\{s\}\right)$
is an effective divisor with ideal $\OO(-1)$ so that $\DD\times_{|\LL|\times S}\left(|\LL|\times\{s\}\right)$
is a hyperplane $\PP^{n-1}$. Since $\xi$ is surjective, its restriction
to $s$ is also surjective. Any element $\alpha\in\LL^{\vee}|_{s}$
is the restriction of a local section $\psi\in\LL^{\vee}$. Thus if
$\alpha$ is not zero there exist $\psi\in\LL^{\vee}$ such that its
restriction to $s$ is $\alpha$and $e_{i}$ such that the $\left.\psi(e_{i})\right|_{s}=\left.\psi\right|_{s}\left(\left.e_{i}\right|_{s}\right)$
is not zero. We can conclude that $\left.\xi^{\vee}\right|_{s}$ is
injective. Because $\left.\sigma\right|_{s}:\left.\psi\right|_{s}\otimes\left.P\right|_{s}\mapsto\left.\xi^{\vee}(\psi)\right|_{s}\left.P\right|_{s}$
we can conclude that $\left.\sigma\right|_{s}$ is injective.
\end{proof}
We will use $\PP_{s_{i}}^{n-1}$ to denote $\DD\times_{|\LL|\times S}\left(|\LL|\times\{s\}\right)$.
\begin{lem}
Let $c_{1}\left(\LL\right)=\beta$ and let $\PPP=\PX$. Then all squares
in the following diagram are Cartesian.

\setlength{\perspective}{2pt} 
\begin{equation}\label{cube}
\begin{tikzcd}[row sep={40,between origins}, column sep={40,between origins}]       
&[-\perspective] \DD_{\PPP}\times_{\DD}\PP^{n-1}\ar{rr}\ar{dd}[near end]{\bar{h}}\ar{dl}[near start]{\bar{j}} &[\perspective] &[-\perspective] \PP^{n-1}\vphantom{\times_{S_1}} \ar{dd}\ar{dl} \\
[-\perspective]     \PPP\times\{s\} \ar[crossing over]{rr} \ar{dd}{h} & & \left|\LL\right|\times\{s\} \\
[\perspective]       & \DD_{\PPP}   \ar{rr} \ar{dl}[near start]{j} & &  \DD\vphantom{\times_{S_1}} \ar{dl} \\
[-\perspective]     \PPP\times S \ar{rr} && \left|\LL\right|\times S \ar[from=uu,crossing over] 
\end{tikzcd}
\end{equation}
\end{lem}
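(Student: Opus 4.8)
The plan is to recognise that every object appearing in the cube \ref{cube} is, canonically, an iterated fibre product over the single base scheme $M:=|\LL|\times S$ of some subcollection of the three schemes
\[
\DD\longrightarrow M,\qquad \PPP\times S\xrightarrow{\ \div\times\id_S\ }M,\qquad |\LL|\times\{s\}\hookrightarrow M,
\]
and that, under these identifications, each of the twelve edges of the cube is a ``forget one factor'' projection. Granting this, all six faces become instances of one elementary identity about fibre products, so no computation is left to do.

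First I would pin down the identifications. The preceding lemma gives $\PP^{n-1}=\DD\times_M(|\LL|\times\{s\})$, and by construction $\DD_\PPP=\div^{-1}\DD=(\PPP\times S)\times_M\DD$. The vertex $\DD_\PPP\times_\DD\PP^{n-1}$ is a fibre product by definition; feeding in the previous two descriptions and using associativity and commutativity of fibre products rewrites it as the triple product $\DD\times_M(\PPP\times S)\times_M(|\LL|\times\{s\})$, whose three ``forget one factor'' projections I would check to be precisely the maps to $\PP^{n-1}$, to $\DD_\PPP$ (namely $\bar h$) and to $\PPP\times\{s\}$ (namely $\bar j$) drawn in the diagram. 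Finally $\PPP\times\{s\}=(\PPP\times S)\times_M(|\LL|\times\{s\})$, because for any morphisms $f\colon\PPP\to|\LL|$ and $g\colon\{s\}\to S$ the square with horizontal maps $f\times\id$ and vertical maps $\id\times g$,
\[
\begin{array}{ccc}
\PPP\times\{s\} & \longrightarrow & |\LL|\times\{s\}\\
\downarrow & & \downarrow\\
\PPP\times S & \longrightarrow & |\LL|\times S,
\end{array}
\]
is Cartesian; taking $f=\div$ and $g$ the inclusion of the point, this is already the front face.

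With these identifications in place the eight vertices are exactly the $2^{3}$ iterated products $\Pi_T:=\prod^{M}_{Z\in T}Z$ (the fibre product over $M$ of the members of $T$, with $\Pi_{\emptyset}=M$), indexed by the subsets $T$ of $\{\DD,\ \PPP\times S,\ |\LL|\times\{s\}\}$, and each edge of the cube forgets exactly one factor. Each of the six faces therefore has the shape
\[
\begin{array}{ccc}
\Pi_{T\cup\{Z_1,Z_2\}} & \longrightarrow & \Pi_{T\cup\{Z_1\}}\\
\downarrow & & \downarrow\\
\Pi_{T\cup\{Z_2\}} & \longrightarrow & \Pi_{T}
\end{array}
\]
for a suitable subset $T$ of $\{\DD,\ \PPP\times S,\ |\LL|\times\{s\}\}$ and two of the three distinguished schemes $Z_1,Z_2\notin T$, and such a square is Cartesian because
\[
\Pi_{T\cup\{Z_1\}}\times_{\Pi_T}\Pi_{T\cup\{Z_2\}}=(\Pi_T\times_M Z_1)\times_{\Pi_T}(\Pi_T\times_M Z_2)=\Pi_T\times_M Z_1\times_M Z_2=\Pi_{T\cup\{Z_1,Z_2\}},
\]
the canonical isomorphism matching the two projections with the two remaining arrows of the square. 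Concretely, two of the six instances are the defining Cartesian squares of $\PP^{n-1}$ and of $\DD_\PPP$, one is the Cartesian square for $\PPP\times\{s\}$ from the previous paragraph, one is the definition of $\DD_\PPP\times_\DD\PP^{n-1}$ itself, and the last two — the left and top faces of the cube — amount to the cancellation law $A\times_C(C\times_D B)=A\times_D B$. Finally, any larger rectangle obtained by pasting two adjacent faces is Cartesian by the usual pasting lemma.

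The only genuinely delicate point is the bookkeeping flagged above: verifying that the structure maps drawn in \ref{cube}, in particular the labelled maps $\bar j$, $\bar h$, $j$, $h$ and the various unlabelled ``$\div$''-arrows along the horizontal edges, really do agree with the projections coming from the fibre-product descriptions rather than being a priori unrelated morphisms. This is routine but must be carried out by unwinding the universal properties used to construct $\DD_\PPP=\div^{-1}\DD$ and $\PP^{n-1}$; once it is in hand, the formal manipulation above yields all six Cartesian squares simultaneously, with no reliance on the possible singularities of $\PX$.
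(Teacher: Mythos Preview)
Your argument is correct. The paper itself states this lemma without proof, so there is nothing to compare against; your cube-as-$2^3$-fibre-products description over $M=|\LL|\times S$ is exactly the routine verification the paper tacitly leaves to the reader, and the identifications you list for the eight vertices (using the preceding lemma for $\PP^{n-1}$ and the definition $\DD_\PPP=\div^{-1}\DD$) are the intended ones.
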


\begin{lem}
\label{lemintersection}If $\beta\in G^{T}(\PPP)$ is supported on
$V\subset\PPP$ then $\beta.\tauss$ is supported on $V\times_{\PPP}W_{s}$
where $W_{s}:=\DD_{\PPP}\times_{\PPP\times S}\left(\PPP\times\{s\}\right)$.
\end{lem}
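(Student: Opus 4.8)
The plan is to track the support of $\beta\cdot\tauss$ through the three operations occurring in $\tauss=\hat{p}_{*}\bigl(\rho_{*}[\OO_{\CC}]\cdot q_{S}^{*}[\OO_{s}]\bigr)$: pullback along $\hat{p}$, multiplication on $\PPP\times S$, and pushforward along $\hat{p}$. Set $\omega:=\rho_{*}[\OO_{\CC}]\cdot q_{S}^{*}[\OO_{s}]\in K^{T}(\PPP\times S)$, so $\tauss=\hat{p}_{*}\omega$. The first step is to exhibit $\omega$ as the class of a bounded complex of $T$-equivariant vector bundles on $\PPP\times S$ which is exact away from $W_{s}$. By Lemma~\ref{rho} the finite morphism $\rho\colon\CC\to\PPP\times S$ factors through the closed embedding $j\colon\DD_{\PPP}\hookrightarrow\PPP\times S$, so $\supp(\rho_{*}\OO_{\CC})\subseteq\DD_{\PPP}$; by Proposition~\ref{prop:resolution} together with Corollary~\ref{Cor1}, $\rho_{*}[\OO_{\CC}]$ is the class $\sum_{i}(-1)^{i}[F^{i}]$ of a finite locally free resolution $F^{\bullet}\to\rho_{*}\OO_{\CC}$ on $\PPP\times S$, which is therefore exact on $(\PPP\times S)\setminus\DD_{\PPP}$. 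Similarly, since $s$ is a smooth point of $S$ it is cut out by a regular sequence, and the associated Koszul complex $L^{\bullet}$ is a bounded complex of vector bundles on $S$ resolving $\OO_{s}$ and exact off $\{s\}$; its pullback $q_{S}^{*}L^{\bullet}$ represents $q_{S}^{*}[\OO_{s}]$ and is exact off $q_{S}^{-1}(s)=\PPP\times\{s\}$. Hence $\omega=[\,F^{\bullet}\otimes q_{S}^{*}L^{\bullet}\,]$, and because a bounded exact complex of vector bundles is locally split, the complex $F^{\bullet}\otimes q_{S}^{*}L^{\bullet}$ is exact on $\bigl((\PPP\times S)\setminus\DD_{\PPP}\bigr)\cup\bigl((\PPP\times S)\setminus(\PPP\times\{s\})\bigr)=(\PPP\times S)\setminus W_{s}$, using $W_{s}=\DD_{\PPP}\cap(\PPP\times\{s\})$.

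Next I would apply the projection formula. Its $G$-theoretic form (the standard extension of Proposition~\ref{prop:=00005BProjection-Formula=00005D} to the $K^{T}$-module structure on $G$-theory, valid here because $\hat{p}$ factors as a closed embedding followed by a projective bundle projection) gives
\[
\beta\cdot\tauss=\beta\cdot\hat{p}_{*}\omega=\hat{p}_{*}\bigl(\hat{p}^{*}\beta\cdot\omega\bigr).
\]
Write $\beta=\sum_{k}a_{k}[\GG_{k}]$ with each $\GG_{k}$ a coherent sheaf on $\PPP$ supported on $V$; since $\hat{p}$ is flat, $\hat{p}^{*}\beta=\sum_{k}a_{k}[\hat{p}^{*}\GG_{k}]$ and each $\hat{p}^{*}\GG_{k}$ is supported on $\hat{p}^{-1}(V)=V\times S$. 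Computing the product with the explicit complex from the first step,
\[
\hat{p}^{*}\beta\cdot\omega=\sum_{k}a_{k}\sum_{i}(-1)^{i}\bigl[\,\mathcal{H}^{i}\bigl(F^{\bullet}\otimes q_{S}^{*}L^{\bullet}\otimes\hat{p}^{*}\GG_{k}\bigr)\,\bigr]\in G^{T}(\PPP\times S),
\]
and every cohomology sheaf occurring here vanishes on $(\PPP\times S)\setminus W_{s}$ (the complex $F^{\bullet}\otimes q_{S}^{*}L^{\bullet}$ is exact there) and also off $V\times S$ (it is obtained by tensoring with the sheaf $\hat{p}^{*}\GG_{k}$), so it is supported on $(V\times S)\cap W_{s}$. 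Since $W_{s}\subseteq\PPP\times\{s\}$, this intersection is contained in $V\times\{s\}$, on which $\hat{p}$ restricts to an isomorphism onto $V$ identifying $(V\times S)\cap W_{s}$ with $V\times_{\PPP}W_{s}$; pushing forward a class supported on a closed subscheme that maps isomorphically onto its image keeps it supported on that image (by functoriality of $\hat{p}_{*}$ and Lemma~\ref{lem:support1}). Therefore $\beta\cdot\tauss$ is supported on $V\times_{\PPP}W_{s}$.

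The main thing to be careful about is that $\tauss$ is a priori only a class in $K^{T}(\PPP)$, not in $G^{T}(\PPP)$, so "supported on $W_{s}$" must be encoded through an explicit locally free resolution that is exact off $W_{s}$ rather than through the image of $G^{T}(W_{s})$; producing such a resolution is exactly where Lemma~\ref{rho} (to locate $\supp(\rho_{*}\OO_{\CC})$ inside $\DD_{\PPP}$) and the \emph{finiteness} of the resolution in Proposition~\ref{prop:resolution}/Corollary~\ref{Cor1} are essential. The remaining ingredients are routine: the $G$-theoretic projection formula just quoted, the compatibility of flat pullback and proper pushforward with supports (immediate from the localization sequence of Lemma~\ref{lem:support1} and from base change, Proposition~\ref{prop:BaseChange}), and the fact that a bounded exact complex of vector bundles remains exact after tensoring by an arbitrary sheaf.
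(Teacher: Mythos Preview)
Your argument is correct and rests on the same core idea as the paper's: both show that the class $\rho_{*}[\OO_{\CC}]\cdot q_{S}^{*}[\OO_{s}]$ on $\PPP\times S$ is represented by a complex exact off $W_{s}$, so tensoring with a sheaf supported on $V\times S$ yields something supported on $(V\times S)\cap W_{s}$, which then pushes down to $V\times_{\PPP}W_{s}$.

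The difference is purely organizational. The paper first uses the identity $\hat{p}\circ h=\id_{\PPP}$ (with $h\colon\PPP\times\{s\}\hookrightarrow\PPP\times S$ from diagram~(\ref{cube})) to rewrite $\tauss=h^{*}[j_{*}\OO_{\CC}]$, thereby sidestepping the projection formula entirely; it then invokes the pre-packaged refined Gysin homomorphism $j^{[\OO_{\CC}]}$ of Lemma~\ref{lem:support0} to obtain $[\FF]\cdot\tauss=\bar{j}_{*}k_{*}j^{[\OO_{\CC}]}(\FF)$ directly. You instead keep the $\hat{p}_{*}$ and apply the projection formula, then unpack the support statement by hand with explicit Koszul and locally free resolutions. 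Your route is slightly more elementary and self-contained (it does not rely on Lemma~\ref{lem:support0}), at the cost of invoking the $G$-theoretic projection formula for $\hat{p}$, which the paper's shortcut avoids.
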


\begin{proof}
Recall the morphism $\hat{p}$ from diagram (\ref{diagR}) and $h,\bar{h}$
from (\ref{cube}). Since $\hat{p}\circ h=\id_{\PPP}$ we can conclude
that $\text{\ensuremath{\tauss}}=h^{*}j_{*}\left[\OO_{\CC}\right]=h^{*}\left[j_{*}\OO_{\CC}\right]$.
Let $E^{\bullet}$ be a finite resolution of $j_{*}\OO_{\CC}$ by
locally free sheaves. It's sufficient prove the statement for the
case when $\beta$ is the class of a coherent sheaf $\FF$ on $V$
. By Lemma \ref{lem:support0}, we have
\begin{align*}
[\FF].\tauss & =\bar{j}_{*}k_{*}j^{[\OO_{\CC}]}(\FF)
\end{align*}
where $j^{[\OO_{\CC}]}$ is the refined Gysin homomorphism and $k$
is the closed embedding $V\times_{\PPP\times\{s\}}W_{s}\rightarrow W_{s}$
where $W_{s}=\DD_{\PPP}\times_{\DD}\PP_{s}^{n-1}$.
\end{proof}
\begin{lem}
\label{lem:Given-m-points}Given $m$ points $s_{1},\ldots,s_{m}\in S$
in general position such that all curves in $|\LL|$ that passes through
all $m$ points are reduced and irreducible, then for any component
$R\subset\PPP^{G}$ different from $\PS$ we have $\iota_{*}\OO_{R}^{vir}.\prod_{i=1}^{m}\taussi=0$.
\end{lem}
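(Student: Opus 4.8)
The plan is to show that the class $\iota_{*}\OO_{R}^{vir}.\prod_{i=1}^{m}\taussi$, which by the preceding lemmas we may compute using $\tauss$ rather than $\taus$, is supported on a subscheme of $\PPP$ that is actually empty under our genericity hypothesis. Iterating Lemma \ref{lemintersection}, starting with $\beta = \iota_{*}\OO_{R}^{vir}$, which is supported on $R$, and multiplying successively by $\taussi$ for $i = 1, \ldots, m$, we find that the product is supported on
\[
R \times_{\PPP} W_{s_1} \times_{\PPP} \cdots \times_{\PPP} W_{s_m},
\]
where $W_{s_i} = \DD_{\PPP}\times_{\PPP\times S}(\PPP\times\{s_i\})$ is the locus of stable pairs in $\PPP$ whose associated divisor $\div\,\pi_{*}\FFF$ passes through $s_i$. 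A point of this fiber product is a stable pair $(\FF,s)\in R$ whose supporting divisor in $|\LL|$ passes through all $m$ points $s_1,\ldots,s_m$.

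**Key steps.** First I would make precise the iteration of Lemma \ref{lemintersection}: each factor $\taussi$ is of the form $h^{*}[j_{*}\OO_{\CC}]$ for the corresponding point $s_i$ (as in the proof of that lemma, using that $\hat p \circ h = \id_{\PPP}$), and multiplying a class supported on $V$ by it cuts down the support to $V\times_{\PPP} W_{s_i}$; so after $m$ steps the support is the $m$-fold fiber product over $\PPP$ displayed above. Second, I would identify this fiber product set-theoretically: a closed point lying in it is a stable pair $(\FF, s) \in R$ with $\div\,\pi_{*}\FF \in |\LL|$ containing $s_1, \ldots, s_m$, i.e. a curve in the sublinear system $\PP^{\delta}\subset|\LL|$ of divisors through the $m$ points (with $\delta = \dim|\LL| - m$). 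Third, I would invoke the genericity hypothesis: all curves in $|\LL|$ through $s_1,\ldots,s_m$ are reduced and irreducible, so the supporting divisor $\div\,\pi_{*}\FF$ is a reduced irreducible curve in $S$. But $R$ is a connected component of $\PPP^{G}$ different from $\PS$, and by the discussion preceding diagram (\ref{diagR}) every $(\FF,s)\in R$ has its sheaf $\FF$ supported on a nontrivial infinitesimal thickening of $S$ inside $X$ — so $\pi_{*}\FF$, and hence $\div\,\pi_{*}\FF$, is nonreduced (the thickening forces multiplicity). This contradiction shows $R \times_{\PPP} W_{s_1}\times_{\PPP}\cdots\times_{\PPP} W_{s_m} = \varnothing$. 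A class in $G^{T}(\PPP)$ supported on the empty subscheme is zero by Lemma \ref{lem:support1}, which gives the claim.

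**Main obstacle.** The delicate point is the third step — rigorously matching the genericity hypothesis (phrased in terms of curves in $|\LL|$) against the geometry of $R$ (phrased in terms of thickenings of $S$ in $X$). One must argue that for $(\FF, s)$ in a non-$\PS$ component $R$, the divisor $\div\,\pi_{*}\FF$ genuinely records the thickening with multiplicity $\geq 2$ along its support, so that it cannot be reduced; this uses that $\div$ on $\PPP$ sends $(\FF,s)$ to $\div(\pi_{*}\FF)$ counted with multiplicities, and that a pair whose curve $C_{\FF}\subset X$ is not contained in the zero section $S$ necessarily pushes forward to a sheaf on $S$ supported on a thickened curve. I would also need to check that the fiber product support statement survives at the level of $G^{T}$-classes and not merely set-theoretically — but since emptiness of the support scheme is what is ultimately used, and Lemma \ref{lem:support1} only requires the complement inclusion, the scheme structure is irrelevant once the underlying set is shown empty. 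The remaining bookkeeping — that the iteration of Lemma \ref{lemintersection} is valid when $\beta = \iota_{*}\OO_{R}^{vir}$ is a virtual (not honest) sheaf class — follows by linearity, writing $\OO_{R}^{vir}$ as an alternating sum of coherent sheaves on $R$.
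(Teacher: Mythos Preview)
Your proposal is correct and follows the same strategy as the paper: iterate Lemma~\ref{lemintersection} to confine the support to $R\times_{|\LL|}\PP^{n-m}$, then show this is empty because no $(\FF,s)\in R$ can have $\div(\FF,s)$ reduced and irreducible, and conclude via Lemma~\ref{lem:support1}. For the step you flag as the main obstacle the paper argues by cases on $C_{\FF}$ rather than asserting nonreducedness outright: if $C_{\FF}$ is reducible then so is the support of $\pi_{*}\OO_{C_{\FF}}$ and hence $\div(\FF,s)$ is not irreducible; if $C_{\FF}$ is irreducible (and then necessarily nonreduced, since $C_{\FF}^{red}\subset S$ but $C_{\FF}\not\subset S$), an explicit local computation writing $\OO_{C_{\FF}}\cong\bigoplus_{i=0}^{r}A/(f^{n_i})\,x^{i}$ over an affine chart $\spec\,A\subset S$ shows $\div\pi_{*}\FF$ is cut out by $f^{\sum_i n_i}$ with $\sum_i n_i\geq 2$ --- so your intuition that ``the thickening forces multiplicity'' is exactly what is verified in the irreducible case.
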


\begin{proof}
Let $\beta_{l}=\iota_{*}\OO_{R}^{vir}.\prod_{i=1}^{l}\hat{\gamma}\left(\OO_{s_{i}}\right)$.
By Lemma \ref{lemintersection}, $\beta_{1}$ is supported on $R\times_{\PPP}W_{s}=R\times_{|\LL|}\PP_{s_{1}}^{n-1}$.
Our assumptions implies that for any $1\leq l\leq m$, $\bigcap_{i=1}^{l-1}\PP_{s_{1}}^{n-1}$
is not contained in $\PP_{s_{l}}^{n-1}$. In particular, $\bigcap_{i=l}^{l}\PP_{s_{l}}^{n-1}=\PP^{n-m}$
and by induction we can conclude that $\beta_{m}$ is supported on
$R\times_{|\LL|}\PP^{n-m}$. Note that all curves in $\PP^{n-m}$
is reduced and irreducible.

We will show that for any $\left(\FF,s\right)\in R$, $\div\,\left(\FF,s\right)$
is not in $\PP^{n-m}$. Let $C_{\FF}$ be the curve on $X$ supporting
an element $(\FF,s)\in R$. Note that the reduced subscheme $C_{\FF}^{red}$
of $C_{\FF}$ is a curve on $S$ so that if $C_{\FF}$ is reduced
and irreducible then $C_{\FF}=C_{\FF}^{red}$ is a curve on $S$ and
$\left(\FF,s\right)$ can't be in $R$. If $C_{\FF}$ is not irreducible,
then the support of $\pi_{*}\OO_{C_{\FF}}$ is not irreduble so that
$\div\,(\FF,s)$ is not in $\PP^{n-m}$. So we are left with the case
when $C_{\FF}$ is irreducible. Let $C$ be the reduced subscheme
of $C_{\FF}$. Let $Spec\,A\subset S$ be an open subset such that
$K_{S}$ is a free line bundle over $Spec\,A$. We can write $C=Spec\,A/(f)$
for an irreducible element $f\in A$ and $X|_{Spec\,A}=Spec\,A[x]$.
Then $\OO_{C_{\FF}}$ can be written as $M:=\oplus_{i=0}^{r}A/(f^{n_{i}})x^{i}$
for some positive integers $r,n_{i}$ and $\div\,M$ is described
by the ideal $(f^{\sum_{i}n_{i}})$. Since $C_{\FF}$ is not supported
on $S$, then $\sum_{i}n_{i}\geq2$ and $\div\,M$ is not reduced.
Thus in this case $\div\,(\FF,s)$ is not in $\PP^{n-m}$.

Since $\div\,(R)$ is disjoint from $\PP^{n-m}$, we can conclude
that $R\times_{|\LL|}\PP^{n-m}$ is empty. By lemma \ref{lem:support1},
$\beta_{m}$ is zero.
\end{proof}
Following the proof of Lemma \ref{lemintersection} and Lemma \ref{lem:Given-m-points}
and by replacing $[\OO_{\CC}]$ with $[\OO_{\DD}]$ we can prove that
the contribution to $\bpxlmn$ of the component $R\subset\PPP^{G}$
where $R\neq\PS$ is zero when $s_{1},\ldots,s_{m}$ is in general
position and all curves on $S$ that passthrough all $m$ points are
reduced and irreducible.

Actually we have a stronger result for $\bpxlmn$. By Proposition
\ref{prop:ps1} for any point $s\in S$, $\tausss$ is $1-[\div^{*}\OO(-1)]$.
In particular it's independent from the choosen point.
\selectlanguage{english}%
\begin{prop}
\label{prop:assumption}Given a positive integer $\delta$, let $S$
be a smooth projective surface with $b_{1}(S)=0$. Let $\LL$ be a
$2\delta+1$-very ample line bundle on $S$ with $c_{1}(\LL)=\beta$
and $H^{i}(\LL)=0$ for $i>0$. Let $X=K_{S}$ be the canonical line
bundle over $S$. Then for any connected component $R$ of $\PX^{\CS}$
different from $\PS$ and for $m\geq H^{0}(\LL)-1-\delta$, we have
\[
R\Gamma\left(R,\frac{\OO_{R}^{vir}}{\bigwedge^{\bullet}\left(N_{vir}^{\bullet}\right)^{\vee}}K_{vir}^{\half}\vert_{R}\otimes\prod_{i=1}^{m}\frac{\tausssi}{\tweightb}\right)=0
\]
where $s_{1},\ldots s_{m}$ are closed points of $S$ which can be
identical. We then can conclude that 
\[
\bpxlmn=R\Gamma\left(\PS,\frac{\OO_{\PS}^{vir}}{\bigwedge^{\bullet}\left(N_{vir}^{\bullet}\right)^{\vee}}K_{vir}^{\half}\vert_{\PS}\otimes\prod_{i=1}^{m}\frac{\tausssi}{\tweightb}\right).
\]

The same result also holds for $\pxlmn$ under additional assumption
that the structure sheaf $\OO_{\CC_{\FFF}}$ of the universal supporting
curve $\CC_{\FFF}$ is flat over $\PX$ and $s_{1},\ldots,s_{m}$
are closed points in $S$ in general position such that all curves
in $|\LL|$ passing through all the given $m$ points are irreducible.
\end{prop}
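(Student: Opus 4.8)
The plan is to reduce both assertions to the claim that each connected component $R$ of $\PX^{\CS}$ other than $\PS$ contributes nothing, handling the divisorial invariant $\bpxlmn$ first since it is the one needing no genericity of the points. Because $\PX^{\CS}$ is quasi-projective it has finitely many connected components---$\PS$ among them---and $R\Gamma$ turns a finite disjoint union into a direct sum, so $\bpxlmn$ is the sum over all components $R$ of the expressions $R\Gamma\bigl(R,\tfrac{\OO_{R}^{vir}}{\bigwedge^{\bullet}(N_{vir}^{\bullet})^{\vee}}K_{vir}^{\half}\vert_{R}\otimes\prod_{i=1}^{m}\tfrac{\tausssi}{\tweightb}\bigr)$, and it is enough to kill every summand with $R\neq\PS$. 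Fix such an $R$, write $\iota\colon R\hookrightarrow\PX$, and invoke the identity $\tausss=1-[\div^{*}\OO(-1)]$ recorded just before the statement (Proposition~\ref{prop:ps1}); the decisive feature is that this class does not depend on the point at all, so $\prod_{i=1}^{m}\iota^{*}\tausssi=\bigl(1-[(\div\circ\iota)^{*}\OO(-1)]\bigr)^{m}=(\div\circ\iota)^{*}\bigl((1-[\OO_{|\LL|}(-1)])^{m}\bigr)$ in $K^{\CS}(R)$.

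Next I would study this pulled-back class on $|\LL|$. The $\CS$-action on $X=K_{S}$ scales the fibres of $\pi$, hence commutes with $\pi$ and leaves $\div(\pi_{*}\FF)$ unchanged; so $\div\colon\PX\to|\LL|$ is $\CS$-invariant and $|\LL|\cong\PP^{n}$, $n=H^{0}(\LL)-1$, carries the trivial action. The Koszul resolution of a linear subspace gives $(1-[\OO(-1)])^{m}=[\OO_{L}]$ for any linear $L\cong\PP^{n-m}$ (with $[\OO_{L}]:=0$ when $m>n$), and this is the key move: $L$ may be chosen general. Since $m\geq n-\delta$ we have $\dim L=n-m\leq\delta$, and as $\LL$ is $2\delta+1$-very ample, Proposition~2.1 of \cite{KST:11} together with Proposition~5.1 of \cite{KT:14} show that the locus $B\subset|\LL|$ of curves that are reducible or non-reduced has codimension at least $\delta+1$; hence a general $\PP^{n-m}$ is disjoint from $B$, i.e. $L$ consists only of reduced and irreducible curves. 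On the other hand, the local computation in the proof of Lemma~\ref{lem:Given-m-points} shows that $\div(\pi_{*}\FF)\in B$ for every $(\FF,s)\in R$: the supporting curve $C_{\FF}\subset X$ is not contained in $S$, so $\div(\pi_{*}\FF)$ is reducible when $C_{\FF}$ is, is non-reduced when $C_{\FF}$ is irreducible but non-reduced, and the reduced and irreducible case cannot occur since it would force $C_{\FF}\subset S$ and hence $(\FF,s)\in\PS$. Therefore $\div(R)\cap L=\emptyset$, so $\div\circ\iota$ factors through the smooth open subset $|\LL|\setminus L$, on which $[\OO_{L}]$ restricts to $0$ by the localisation sequence of Lemma~\ref{lem:support1}; consequently $(\div\circ\iota)^{*}[\OO_{L}]=0$, the whole integrand over $R$ vanishes, and summing over components gives $\bpxlmn=R\Gamma\bigl(\PS,\ldots\bigr)$.

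For $\pxlmn$ I would run the parallel argument with the point-dependent insertion $\tausi$. Flatness of $\OO_{\CC_{\FFF}}$ over $\PX$ is precisely what makes each $\tausi$ a well-defined $K$-theory class, and the label-free lemma just after Lemma~\ref{lem:kappa-1}---which, as noted immediately after it, stays valid with an arbitrary auxiliary class in place of $K_{vir}^{\half}\vert_{R}$, so the scalar factors $1/\tweightb$ are harmless---allows me to replace each $\tausi$ by $\taussi$ inside $R\Gamma(R,\,\cdot\,)$. Beginning with $\OO_{R}^{vir}$, which is supported on $R$, and applying Lemma~\ref{lemintersection} $m$ times, the class $\OO_{R}^{vir}\cdot\prod_{i=1}^{m}\iota^{*}\taussi$ is supported on $R\times_{|\LL|}\bigl(\bigcap_{i=1}^{m}\PP_{s_{i}}^{n-1}\bigr)=R\times_{|\LL|}\PP^{n-m}$. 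Under the stated hypotheses---the $s_{i}$ in general position with all curves through them irreducible, hence (by $2\delta+1$-very ampleness, since then $B$ has codimension exceeding $\delta\geq n-m$) reduced and irreducible---Lemma~\ref{lem:Given-m-points} gives $\div(R)\cap\PP^{n-m}=\emptyset$, so this support is empty and the class vanishes; summing over components yields $\pxlmn=R\Gamma\bigl(\PS,\ldots\bigr)$.

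The real content is the $\bpxlmn$ half, and in particular the point that $\tausss$ records nothing about the chosen point $s$, so that $\prod_{i=1}^{m}\tausssi$ equals the single fixed class $\div^{*}[\OO_{L}]$ for a linear $L\subset|\LL|$ that one is free to move by $\mathrm{PGL}$; the very-ampleness hypothesis then parks $L$ off the bad locus $B\supseteq\div(R)$ entirely, which is exactly what removes the general-position requirement that the $\pxlmn$ case still needs. The one place requiring a little care is verifying that a general $\PP^{n-m}$, with $n-m$ possibly strictly smaller than $\delta$, still meets only reduced and irreducible curves; but that is immediate once one knows $\codim_{|\LL|}B\geq\delta+1>n-m$.
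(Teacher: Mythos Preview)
Your argument is correct and matches the paper's approach; the paper assembles Proposition~\ref{prop:assumption} from the discussion immediately preceding it (Lemmas~\ref{lemintersection} and~\ref{lem:Given-m-points} for $\pxlmn$, then the point-independence of $\tausss$ from Proposition~\ref{prop:ps1} to upgrade the $\bpxlmn$ statement), and you reproduce exactly this. The only cosmetic difference is the order for $\bpxlmn$: the paper first runs the support argument with $[\OO_{\DD}]$ in place of $[\OO_{\CC}]$ assuming general-position points and \emph{then} invokes point-independence to drop that hypothesis, whereas you invoke point-independence first, package $\prod_i\tausssi$ as $\div^*[\OO_L]$ via the Koszul identity $(1-[\OO(-1)])^m=[\OO_L]$, and move $L$ to a general $\PP^{n-m}$ before applying the geometric disjointness. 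Both routes rest on the same two ingredients---Proposition~\ref{prop:ps1} and the observation that $\div(R)$ lands in the non-reduced-or-reducible locus---so this is a reordering rather than a different proof.
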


\selectlanguage{american}%

\subsection{The contribution of $\protect\PS$}

\selectlanguage{english}%
The component $\PS$ of $\PX^{G}$ parametrize stable pairs $(F,s)$
supported on $S\subset X$ where $S$ is the zero section. The restriction
of $\II$ to $\PS\times X$ is $\II_{X}:=\{\OO_{\PS\times X}\rightarrow\FF\}$,
where $\FF$ is the universal sheaf restricted to $\PS\times X$,
so that the restriction of $\EEE^{\bullet}$ to $\PS$ is $Rp_{*}R\hhom\left(\II_{X},\II_{X}\otimes\tweight^{*}\right)_{0}[2]$
. Thomas and Kool showed that on $\PS$, the decomposition of $\left.\EEE^{\bullet}\right|_{\PS}$
into fixed and moving part is 
\begin{equation}
\left(\EEE^{\bullet}\right)^{mov}\simeq Rp_{*}R\hhom\left(\II_{S},\FF\right)[1]\otimes\tweight^{*}\qquad\left(\EEE^{\bullet}\right)^{fix}\simeq\left(Rp_{*}R\hhom\left(\II_{S},\FF\right)\right)^{\vee}\label{eq:13}
\end{equation}
where $\II_{S}=\{\OO_{\PS\times S}\rightarrow\FF\}$. $\left(\EEE^{\bullet}\right)^{fix}$
gives $\PS$ a perfect obstruction theory. We will use $\EE^{\bullet}$
to denote $\left(\EEE^{\bullet}\right)^{fix}$. From equation (\ref{eq:13})
and (\ref{eq:serre duality}) we have $\left(\EEE^{\bullet}\right)^{mov}\simeq\left(\EE^{\bullet}\right)^{\vee}[1]\otimes\tweight^{*}$.
\begin{prop}
\label{K^vir}On $\PS$ we have 
\[
\frac{K_{vir}^{\half}\vert_{\PS}}{\bigwedge^{\bullet}\left(N_{vir}^{\bullet}\right)^{\vee}}=\left(-\tweight^{-\half}\right)^{v}{\textstyle \bigwedge_{-\tweight}}\EE^{\bullet}
\]
where $vd=\rk\EE^{\bullet}$ and $\bigwedge_{-\tweight}\EE^{\bullet}=\frac{\sum_{i=0}^{\rk E^{0}}\left(-\tweight\right)^{i}\bigwedge^{i}\EE^{0}}{\sum_{j=0}^{\rk E^{-1}}\left(-\tweight\right)^{j}\bigwedge^{j}\EE^{-1}}$
for $\EE^{\bullet}=[\EE^{-1}\rightarrow\EE^{0}]$.
\end{prop}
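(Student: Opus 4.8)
The plan is to compute the two ingredients $\left.K_{vir}^{\half}\right|_{\PS}$ and $\bigwedge^{\bullet}(N_{vir}^{\bullet})^{\vee}$ separately, starting from the decomposition (\ref{eq:13}), and then to observe that the non-equivariant line bundle which Serre duality forces out of $\bigwedge^{\bullet}(N_{vir}^{\bullet})^{\vee}$ is exactly the one occurring in $\left.K_{vir}^{\half}\right|_{\PS}$, so that the two cancel and only a power of $\tweight^{\half}$ survives.

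First I would unwind $N_{vir}^{\bullet}$. Since $N^{vir}=\left((\EEE^{\bullet})^{mov}\right)^{\vee}$ we have $(N_{vir}^{\bullet})^{\vee}=(\EEE^{\bullet})^{mov}$, and by (\ref{eq:13}) together with the identification $(\EEE^{\bullet})^{mov}\simeq(\EE^{\bullet})^{\vee}[1]\otimes\tweight^{*}$ this is the two-term complex whose degree $-1$ term is $(\EE^{0})^{\vee}\otimes\tweight^{*}$ and whose degree $0$ term is $(\EE^{-1})^{\vee}\otimes\tweight^{*}$; in particular the degree $0$ term has rank $\rk\EE^{-1}$ and the degree $-1$ term has rank $\rk\EE^{0}$. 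Feeding this into the definition of $\bigwedge^{\bullet}$ and using $\bigwedge^{i}(V\otimes L)=(\bigwedge^{i}V)\otimes L^{\otimes i}$ for the line bundle $L=\tweight^{*}$, together with the self-duality $\bigwedge^{i}V^{\vee}\cong\bigwedge^{\rk V-i}V\otimes(\det V)^{-1}$, I would reindex $i\mapsto\rk V-i$ and find that the numerator $\sum_{i}(-1)^{i}\bigwedge^{i}\left((\EE^{-1})^{\vee}\otimes\tweight^{*}\right)$ equals $(\det\EE^{-1})^{-1}\left(-\tweight^{-1}\right)^{\rk\EE^{-1}}\sum_{k}(-\tweight)^{k}\bigwedge^{k}\EE^{-1}$, a scalar multiple of the denominator of $\bigwedge_{-\tweight}\EE^{\bullet}$, while the denominator $\sum_{j}(-1)^{j}\bigwedge^{j}\left((\EE^{0})^{\vee}\otimes\tweight^{*}\right)$ equals $(\det\EE^{0})^{-1}\left(-\tweight^{-1}\right)^{\rk\EE^{0}}\sum_{k}(-\tweight)^{k}\bigwedge^{k}\EE^{0}$, a scalar multiple of the numerator of $\bigwedge_{-\tweight}\EE^{\bullet}$. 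Dividing, the wedge powers assemble into $\bigwedge_{-\tweight}\EE^{\bullet}$ and one obtains
\[
\frac{1}{\bigwedge^{\bullet}(N_{vir}^{\bullet})^{\vee}}=\left(\det\EE^{0}\otimes(\det\EE^{-1})^{-1}\right)^{-1}\left(-\tweight^{-1}\right)^{v}{\textstyle\bigwedge_{-\tweight}}\EE^{\bullet},
\]
where $v=\rk\EE^{0}-\rk\EE^{-1}=\rk\EE^{\bullet}$; here $\det\EE^{0}\otimes(\det\EE^{-1})^{-1}$ is precisely the virtual canonical bundle of the perfect obstruction theory $\EE^{\bullet}$ on $\PS$.

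Next I would compute $\left.K_{vir}^{\half}\right|_{\PS}$ from the weight decomposition of $\left.\EEE^{\bullet}\right|_{\PS}$. By (\ref{eq:13}) this decomposition is $\EE^{\bullet}\oplus\left((\EE^{\bullet})^{\vee}[1]\otimes\tweight^{*}\right)$, with weight $0$ part $\EE^{\bullet}$ and weight $-1$ part $(\EE^{\bullet})^{\vee}[1]$. The formula for the canonical equivariant square root of $K_{vir}$ recalled earlier in this section then gives $\left.K_{vir}^{\half}\right|_{\PS}=\det(\EE^{\bullet})\otimes\tweight^{v/2}=\left(\det\EE^{0}\otimes(\det\EE^{-1})^{-1}\right)\otimes\tweight^{v/2}$, where one passes to the double cover $G'$ of $G$ when $v$ is odd so that $\tweight^{v/2}$ is meaningful. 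Multiplying this against the previous display, the line bundle $\det\EE^{0}\otimes(\det\EE^{-1})^{-1}$ cancels against its inverse and the remaining characters combine as $\tweight^{v/2}\left(-\tweight^{-1}\right)^{v}=\left(-\tweight^{-\half}\right)^{v}$, which is the asserted identity.

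The only real content is the cancellation in the last step: the non-equivariant line bundle produced by the Serre-duality twist $\tweight^{*}$ inside $\bigwedge^{\bullet}(N_{vir}^{\bullet})^{\vee}$ is exactly $\det\EE^{0}\otimes(\det\EE^{-1})^{-1}$, the same bundle entering $\left.K_{vir}^{\half}\right|_{\PS}$ --- this is the ``completing the square'' phenomenon that makes $K_{vir}^{\half}$ exist in the symmetric case, and it is what forces the ratio to be a pure power of $\tweight^{\half}$. The part that requires care, rather than the part that is genuinely hard, is keeping the shift $[1]$, the sign $(-1)^{\rk}$, and the $\tweight$-weight of the moving part mutually consistent throughout the $\Lambda$-operation bookkeeping; once the conventions are pinned down the computation is routine.
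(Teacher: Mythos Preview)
Your argument is correct and follows the same route as the paper: compute $K_{vir}^{\half}\vert_{\PS}=\det\EE^{\bullet}\,\tweight^{v/2}$ from the weight decomposition, compute $\bigwedge^{\bullet}(N^{vir})^{\vee}$ from $(\EEE^{\bullet})^{mov}\simeq(\EE^{\bullet})^{\vee}[1]\otimes\tweight^{*}$, and observe the cancellation of $\det\EE^{\bullet}$. The paper's own proof is terser---after writing down $K_{vir}^{\half}\vert_{\PS}$ and the shape of $(N^{vir})^{\vee}$ it simply asserts the final identity---whereas you spell out the reindexing $\bigwedge^{i}V^{\vee}\cong\bigwedge^{\rk V-i}V\otimes(\det V)^{-1}$ that makes the line bundle $\det\EE^{0}\otimes(\det\EE^{-1})^{-1}$ appear explicitly; but this is exactly the computation the paper is suppressing, not a different method.
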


\begin{proof}
By equation (\ref{eq:13}) and (\ref{eq:serre duality}) we have 
\begin{align*}
K_{vir}\vert_{\PS} & =\det\EE^{\bullet}\det\left(\left(\EE^{\bullet}\right)^{\vee}\otimes\tweight^{*}\right)^{\vee}=\det\EE^{\bullet}\det\EE^{\bullet}\tweight^{v}
\end{align*}
 where $v=rk$$\EE^{\bullet}.$ Thus we can take $K_{vir}^{\half}\vert_{\PS}=\det\EE^{\bullet}\tweight^{\half v}.$
Let $\EE^{\bullet}=[\EE^{-1}\rightarrow\EE^{0}]$ so that $\left(\EE^{\bullet}\right)^{\vee}[1]\otimes\tweight^{*}=[\left(\EE^{0}\right)^{\vee}\otimes\tweight^{*}\rightarrow\left(\EE^{-1}\right)^{\vee}\otimes\tweight^{*}]$
in the place of $-1$ and $0$. Let $r_{i}=rk\EE^{i}$ for $i=-1$
and $i=0$. Thus in $K^{G}(\PS)$ we have 
\begin{align*}
\frac{K_{vir}^{\half}\vert_{\PS}}{\bigwedge^{\bullet}\left(N_{vir}^{\bullet}\right)^{\vee}} & =\left(-\tweight^{-\half}\right)^{vd}{\textstyle \bigwedge_{-\tweight}}\EE^{\bullet}
\end{align*}
\end{proof}
The calculation of the contribution from this component is given in
the next section. We recall Corollary \ref{cor:Computation} here.

Under the assumption of Proposition \ref{prop:assumption} the formula
for $\bpxlmn$ is

\begin{multline*}
\left(-1\right)^{vd}\int\limits _{\left[\PS\right]^{red}}\frac{X_{-\tweight}\left(T\SN\right)X_{-\tweight}\left(\OO(1)\right)^{\delta+1}}{X_{-\tweight}\left(\ee\right)}\left(\frac{\tweightb e^{-H\left(\tweightb\right)}}{\tweightb}\right)^{m}H^{m}
\end{multline*}
where $vd$ is the virtual dimension of $\PS$ and $\OO(1)$ is the
dual of the pullback by the morphism $\div:\PX\rightarrow|\LL|$ of
the tautological line bundle and $H=c_{1}(\OO(1))$ and for any vector
bundle $E$ of rank $r$ with Chern roots $x_{1},\ldots,x_{r}$, 
\[
X_{-\tweight}(E)=\prod_{i=1}^{r}\frac{x_{i}\left(\tweightb e^{-x_{i}\left(\tweightb\right)}\right)}{1-e^{-x_{i}\left(\tweightb\right)}}.
\]
We have the same formula for $\pxlmn$ whenever $\pxlmn$ can be defined.
This is because the restriction of $\tausi$ and $\tausssi$ to $\PS$
are identical.

\selectlanguage{american}%
We can observe from the above formula that $\pxlmn$ is independent
from the choosen points. It's natural to ask if without assuming that
$s_{1},\ldots,s_{m}$ are in general positions such that all curves
passing through all these points are reduced and irreducible the above
proposition still holds.
\selectlanguage{english}%

\section{Refinemnet of Kool-Thomas Invariants}
\selectlanguage{american}%

\subsection{Reduced obstruction theory of moduli space of stable pairs on surface}

In Section 4 we have reviewed the construction of reduced obstruction
theory by Kool and Thomas in \cite{KT:14}. In this section we will
review the description of it's restriction to $\PS$ as a two term
complex of locally free sheaves following Appendix $A$ of \cite{KT:14}.
The Appendix is written by Martijn Kool, Richard P. Thomas and Dmitri
Panov.

\selectlanguage{english}%
Pandharipande and Thomas showed that $\PS$ is isomorphic to the relative
Hilbert scheme of points $\hilb^{n}(\CC/\hilb_{\beta}(S))$ where
$\CC\rightarrow\hilb_{\beta}(S)$ is the universal family of curves
$C$ in $S$ in class $\beta\in H_{2}(S,\ZZ)$ and $\chi=n+1-h$ where
$h$ is the arithmetic genus of $C$. Notice that for $n=1$ , $\PS=\hilb^{1}(\CC/\hilb_{\beta}(S))=\hilb_{\beta}(S)$.

We will review first the description of $\PS$ as the zero locus of
a vector bundle on a smooth scheme. We assume that $b_{1}(S)=0$ for
simplicity and also because we are only working for this case in this
article. The following construction does not need this assumption.

\selectlanguage{american}%
For $n=0$, pick a sufficiently ample line divisor $A$ on $S$ such
that $\LL(A)=\LL\otimes\OO(A)$ satisfies $H^{i}(\LL(A))=0$ for $i>0$.
Let $\gamma=\beta+[A]$. Then $\hilb_{\gamma}(S)=|\LL(A)|=\PP^{\chi(\LL(A)-1}$
has the right dimension. The map that send $C\in|\LL|$ to $C+A\in|\LL(A)|$
defines a closed embedding $\hilb_{\beta}(S)\rightarrow\hilb_{\gamma}(S)$.

\selectlanguage{english}%
Let $\DD\subset H_{\gamma}\left(S\right)\times S$ be the universal
divisor and let $\hat{p}$ and $q_{S}$ be the projections $H_{\gamma}(S)\times S\rightarrow H_{\gamma}(S)$
and $H_{\gamma}(S)\times S\rightarrow S$ respectively. Let $s_{\DD}\in H^{0}\left(\OO(\DD)\right)$
be the section defining $\DD$ and restrict it to $H_{\gamma}\left(S\right)\times A$
and consider the section
\begin{align*}
\zeta & :=s_{\DD}|_{\pi_{S}^{-1}A}\in H^{0}(H_{\gamma}(S)\times A,\OO(\DD)|_{\pi_{S}^{-1}A})=H^{0}(H_{\gamma}(S),\pi_{H*}(\OO(\DD)|_{\pi_{S}^{-1}A}))
\end{align*}
where for a point $D\in H_{\gamma}\left(S\right)$ we have $\zeta|_{D}=s_{D}|_{A}\in H^{0}(A,\LL(A))$
where $s_{D}$ is the section of $\LL(A)$ defining $D$. $s_{D}|_{A}=0$
if and only if $A\subset D$ i.e $D=A+C$ for some effective divisor
$C$ with $\OO(C)\otimes\OO(A)=\LL(A)$. Thus the zero locus of $\zeta$
is the image of the closed embedding $\hilb_{\beta}(S)\rightarrow\hilb_{\gamma}(S).$
If $H^{2}(\LL)=0$ then $F=\pi_{H*}(\OO(\DD)|_{\pi_{S}^{-1}A})$ is
a vector bundle of rank $\chi(\LL(A))-\chi(\LL)=h^{0}(\LL(A))-h^{0}(\LL)+h^{1}(\LL)$
on $\hilb_{\gamma}(S)$ since $R^{i}\pi_{H_{*}}\left(\OO(\DD\right)|_{\pi_{S}^{-1}A})=0$
for $i>0$. Consider the following diagram

\[
\xymatrix{\fred= & \{F^{*}\,\,\,\,\,\,\ar[r]^{d\circ\zeta^{*}\,\,\,\,\,\,\,\,\,\,\,\,\,\,\,\,\,\,\,\,\,}\ar[d]_{\zeta^{*}} & \Omega_{H_{\gamma}(S)}|_{H_{\beta}(S)}\}\ar[d]^{\text{id}}\\
\LLL_{H_{\beta}(S)}= & \{I/I^{2}|_{H_{\beta}(S)}\ar[r]^{d\,\,\,\,\,\,\,\,\,\,\,\,\,\,} & \Omega_{H_{\gamma}(S)}|_{H_{\beta}(S)}\}.
}
\]
The above morphism is a perfect obstruction theory for $\hilb_{\beta}(S)$.

Next, we embed $\hilb^{n}(\CC/\hilb_{\beta}(S))$ into $S^{[n]}\times\hilb_{\beta}(S)$.
Let $\ZZZ\subset S^{[n]}\times\hilb_{\beta}(S)\times S$ be the pullback
of the universal length $n$ subscheme of $S^{[n]}\times S$. Let
$\CC\subset S^{[n]}\times\hilb_{\beta}(S)\times S$ be the pullback
of the universal divisor of $\hilb_{\beta}\times S$ and let $\pi:S^{[n]}\times\hilb_{\beta}(S)\times S\rightarrow S^{[n]}\times\hilb_{\beta}(S)$
be the projection. Then $\CC$ correspond to a section $s_{\CC}$
of the line bundle $\OO(\CC)$ on $S^{[n]}\times\hilb_{\beta}(S)\times S.$
A point $(Z,C)\in S^{[n]}\times\hilb_{\beta}(S)$ is in the image
of $\hilb^{n}(\CC/\hilb_{\beta}(S))$ if $Z\subset C.$ We denote
by $\OO(\CC)^{[n]}$ the vector bundle $\pi_{*}\left(\OO(\CC)|_{\ZZZ}\right)$
of rank $n$. Let $\sigma_{\CC}$ be the pushforward of $s_{\CC}$
so that $\sigma_{\CC}|_{(Z,C)}=s_{C}|_{Z}\in H^{0}(\LL|_{Z})$. Thus
a point $(Z,C)\in S^{[n]}\times\hilb_{\beta}(S)$ is in the image
of $\hilb^{n}(\CC/\hilb_{\beta}(S))$ if and only if $\sigma_{\CC}|_{(Z,C)}=s_{C}|_{Z}=0$.
Thus we get a perfect relative obstruction theory :

\[
\xymatrix{\,\,\,\,\,\,\,\,\,\,\,\,\,\,\,\,\,\,\,\,\,\,\,\,\,\,\,\,\,\,\,\,\,\,\,\,\,\,E^{\bullet}= & \{\left(\OO(\CC)^{[n]}\right){}^{*}\ar[d]_{s^{*}}\ar[r]^{\,\,\,\,\,\,\,\,\,\,d\circ s^{*}} & \Omega_{\SN}\ar[d]^{\text{id}}\}\\
\LLL_{\hilb^{n}\left(\CC/\hilb_{\beta}(S)\right)/\hilb_{\beta}(S)}= & \{J/J^{2}\ar[r]^{d} & \Omega_{\SN}\}
}
\]
where $J$ is the ideal describing $\hilb^{n}\left(\CC/\hilb_{\beta}(S)\right)$
as a subscheme of $\SN\times\hilb_{\beta}(S).$ Notice that in general
$|\LL|$ is not of the right dimension.

\selectlanguage{american}%
Appendix A of \cite{KT:14} shows how to combine the above obstruction
theories to define an absolute perfect obstruction theory for $\hilb^{n}\left(\CC/\hilb_{\beta}(S)\right)$.
To do it we have to consider the embedding of $\hilb^{n}\left(\CC/\hilb_{\beta}(S)\right)$
into $S^{[n]}\times\hilb_{\gamma}(S)$. $E^{\bullet}$ is the restriction
of $[(\OO(\DD-A)^{[n]})^{*}\rightarrow\Omega_{S^{[n]}}]$ to $\hilb^{n}\left(\CC/\hilb_{\beta}(S)\right)$.
It was shown that the complex $E_{red}^{\bullet}$ that correspond
to the combined obstruction theory sits in the following exact triangle
\[
\xymatrix{F_{red}^{\bullet}\ar[r] & E_{red}^{\bullet}\ar[r] & E^{\bullet}}
.
\]
Also in Appendix A of \cite{KT:14}, it was shown that the combination
of the above obstruction theory have the same $K$-theory class with
the reduced obstruction theory $\EE_{red}^{\bullet}$. Thus we can
conclude that the $K$-theory class of $\EE_{red}^{\bullet}$ is 
\begin{equation}
[\Omega_{S^{[n]}\times\hilb_{\gamma}(S)}]-[(\OO(\DD-A)^{[n]})^{*}]-\left[F^{*}\right]\label{eq:K-class}
\end{equation}
Moreover, Theorem A.7 of \cite{KT:14} gives the virtual class corresponding
to the reduced obstruction theory $\left[\PS\right]^{red}$ as the
class $c_{n}\left(\OO(\DD-A^{[n]}\right).c_{top}\left(F\right)\cap[S^{[n]}\times\hilb_{\gamma}(S)]$.

\subsection{Point insertion and linear subsystem}

\selectlanguage{english}%
In this section we assume that $h^{0,1}(S)=0$ i.e. $\pic_{\beta}=\left\{ \LL\right\} $
and $\hilb_{\beta}(S)=|\LL|$. 

Let $\DD\subset S\times|\LL|$ be the universal curve. Pandharipande
and Thomas showed in \cite{PT2:09} that $\PS$ is isomorphic to the
relative Hilbert scheme of points $\hilb^{n}(\DD\rightarrow|\LL|)$.
There is an embedding of $\hilb^{n}(\DD\rightarrow|\LL|)$ into $S^{[n]}\times|\LL|$
and the projection $\hilb^{n}(\DD\rightarrow|\LL|)\rightarrow|\LL|$
gives a morphism $\div:\PS\rightarrow|\LL|$ that maps $(\FF,s)\in\PS$
to the supporting curve $C_{\FF}\in|\LL|$ of $\FF$.

Fix $\chi\in\ZZ$ and let $\CC$ be the universal curve supporting
the universal sheaf $\FF$ on $S\times\PS.$ Consider the following
diagram

\[
\xymatrix{\PS\times S\ar[r]^{\,\,\,\,\,\,\,\,\,\,\,\,\,q_{S}}\ar[d]_{\hat{p}} & S\\
\PS
}
\]
Of course when $n=1$ , $\PS$ is $|\LL|$ and $\CC=\DD$.

Here we will compute explicitly the class $\taus$ restricted to $\PS\rightarrow\PX^{G}$.
Note that $G$ acts trivially on $S$ and on $\PS$. Let $\CC\subset\PS\times\bar{X}$
be the support of the universal sheaf. Note that $\CC$ is supported
on $\PS\times S$ where $S$ is the zero section of the bundle $X\rightarrow S$.
Thus $\bar{\pi\circ i:\CC\rightarrow\PS\times S}$ is a closed embedding.
By equation (\ref{eq:taus}), $\taus=\hat{p}_{*}\left([\OO_{\CC}]\otimes q_{S}^{*}\left[\OO_{s}\right]\right).$
Notice that $G$ acts on $\OO_{s}$ and $\OO_{\CC}$ trivially.
\begin{prop}
\label{prop:ps0}Let $s\in S$ be a point with structure sheaf $\OO_{s}$.
Let $[\OO_{s}]$ be its class in $K(S).$ Then 
\[
\hat{p}_{*}\left(\left[\OO_{\CC}\right].q_{S}^{*}\left[\OO_{s}\right]\right)=1-[\div^{*}\OO(-1)].
\]
 where $\OO(-1)$ is the tautological line bundle on $|\LL|$.
\end{prop}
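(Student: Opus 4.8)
The plan is to recognize the universal curve $\CC$ on $\PS\times S$ as the pullback of the universal divisor along $\div$; this turns $\hat p_*\!\left([\OO_\CC]\cdot q_S^*[\OO_s]\right)$ (which is $\taus|_{\PS}$) into a short computation with the projection and base‑change formulas.

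\textbf{Identifying $\CC$.} First I would show $\CC\subset\PS\times S$ equals $\DD_{\PPP}:=(\div\times\id_S)^{-1}\DD$. Under the Pandharipande--Thomas isomorphism $\PS\cong\hilb^n(\DD\rightarrow|\LL|)$ the closed subscheme $\CC$, i.e.\ the support of the universal sheaf (which as noted lies over the zero section), is a flat family of effective divisors on $S$ of class $\beta$; since $\hilb_\beta(S)=|\LL|$ this family is classified by a morphism $\PS\to|\LL|$, which by the very definition of $\div$ is $\div$ itself. Fibrewise $\FF$ is a rank‑one torsion‑free sheaf on its supporting curve, so its scheme‑theoretic support is exactly that curve, which is what makes the identification hold scheme‑theoretically; in particular $\CC$ is a relative Cartier divisor, so $[\OO_\CC]=1-[\OO(-\CC)]$ in $K^G(\PS\times S)$.

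\textbf{Pinning down the line bundle and pushing forward.} Recall that $\OO_\DD$ is the cokernel of an injection $\sigma:\LL^\vee\boxtimes\OO(-1)\hookrightarrow\OO_{S\times|\LL|}$, so $\OO(-\DD)=\LL^\vee\boxtimes\OO(-1)$; pulling back along $\div\times\id_S$ gives $\OO(-\CC)=q_S^*\LL^\vee\otimes\hat p^*\div^*\OO(-1)$ on $\PS\times S$. Now using $[\OO_\CC]=1-q_S^*[\LL^\vee]\cdot\hat p^*[\div^*\OO(-1)]$, the isomorphism $\OO_s\otimes\LL^\vee\cong\OO_s$, and the projection formula for $\hat p:\PS\times S\to\PS$ (which factors as a closed embedding into $\PP^N_{\PS}$ followed by a projective‑bundle projection, exactly as in the construction of the pushforward used to define $\taus$),
\[
\hat p_*\!\left([\OO_\CC]\cdot q_S^*[\OO_s]\right)=\hat p_*\!\left(q_S^*[\OO_s]\right)\bigl(1-[\div^*\OO(-1)]\bigr).
\]
Then Proposition \ref{prop:BaseChange}(2), applied with $A=S$, $Y=\mathrm{Spec}\,\CCC$, $\bar Y=\PS$, identifies $\hat p_*\circ q_S^*$ with $g^*\circ p_*$ for the structure maps $p:S\to\mathrm{Spec}\,\CCC$ and $g:\PS\to\mathrm{Spec}\,\CCC$, so $\hat p_*(q_S^*[\OO_s])=g^*\chi(S,\OO_s)=g^*(1)=1$, and the claimed identity $\hat p_*([\OO_\CC]\cdot q_S^*[\OO_s])=1-[\div^*\OO(-1)]$ follows.

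The only step asking for genuine care is the first one: verifying that the support of the universal sheaf coincides, as a scheme and compatibly in families, with the Cartier divisor $\div^{-1}\DD$. Everything after that is formal manipulation with the projection and base‑change formulas already available, so I expect no substantive obstacle.
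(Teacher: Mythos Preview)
Your argument is correct and follows essentially the same route as the paper: identify $\CC$ with $(\div\times\id_S)^{-1}\DD$, use the resolution $0\to\LL^\vee\boxtimes\OO(-1)\to\OO\to\OO_\DD\to0$, the triviality of $\LL^\vee|_s$, and the projection/base-change formulas. The only organizational difference is that the paper first carries out the computation on $|\LL|\times S$ and then pulls back along $\div$, whereas you work directly on $\PS\times S$ and invoke base change only to evaluate $\hat p_*(q_S^*[\OO_s])=1$; both arrive at the same formula by the same mechanism.
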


\begin{proof}
First consider the following diagram

\[
\xymatrix{|\LL|\times S\ar[r]^{\,\,\,\,\,\,\,q_{S}}\ar[d]^{\hat{p}^{|\LL|}} & S\\
|\LL|
}
.
\]
We will show that $\hat{p}_{*}\left(q_{S}^{*}[\OO_{z}].[\OO_{\DD}]\right)=1-\left[\OO(-1)\right]$.
Since $q_{S}$ is a flat morphism $q_{S}^{*}[\OO_{z}]=[q_{S}^{*}\OO_{z}]=k_{*}\left[\OO_{|\LL|\times\left\{ z\right\} }\right]$
where $k$ is the inclusion $k:|\LL|\times\{z\}\rightarrow|\LL|\times S$.
$\CC$ is the universal divisor with $\LL^{*}\boxtimes\OO(-1)$ as
the defining ideal. By the projection formula $q_{S}^{*}[\OO_{s}].[\OO_{\DD}]$
is equal to
\[
k_{*}\left[\OO_{|\LL|\times\left\{ s\right\} }\right].\left(1-\left[\LL^{*}\boxtimes\OO(-1)\right]\right)=k_{*}\left([k^{*}\OO_{|\LL|\times S}]-\left[k^{*}q_{S}^{*}\LL^{*}\otimes k^{*}\hat{p}^{*}\OO(-1)\right]\right).
\]
$k^{*}q_{S}^{*}\LL^{*}=q_{s}^{*}\LL^{*}|_{s}=\OO_{|\LL|\times\{s\}}$
where $q_{s}=q_{S}|_{|\LL|\times\{s\}}$ and $k^{*}\hat{p}^{*}\OO(-1)=\OO(-1)$
since $\hat{p}\circ k$ is the identity morphism. Thus we conclude
that 
\[
\hat{p}_{*}\left(q_{S}^{*}[\OO_{s}].[\OO_{\DD}]\right)=\hat{p}_{*}k_{*}\left(\left[\OO_{|\LL|\times\{s\}}\right]-\left[\OO(-1)\right]\right)=1-[\OO(-1)]
\]

Now we are working on $\PS.$ Consider the following Cartesian diagram
\[
\xymatrix{\div^{-1}\DD\ar[r]\ar[d] & \DD\ar[d]\\
\PS\times S\ar[r]^{\,\,\,\,\,\,\,\,\,\,\div}\ar[d]_{\hat{p}^{\PS}} & |\LL|\times S\ar[d]_{\hat{p}^{|\LL|}}\\
\PS\ar[r]^{\,\,\,\,\,\,\,\,\,\div} & |\LL|.
}
\]
$\div^{-1}\DD$ is the family of effective Cartier divisor corresponding
to the morphism $\div:\PS\rightarrow|\LL|,$ For each point $p\in\PS,$
$\div^{-1}\DD|_{p}$ is the corresponding curve $\CC_{\FF_{p}}$ supporting
the sheaf $\FF_{p}$. We conclude that $\CC$ and $\div^{-1}\CC$
are the same families of divisors on $S$ so that we have a short
exact sequence

\[
\xymatrix{0\ar[r] & \div^{*}(\LL^{*}\boxtimes\OO(-1))\ar[r] & \OO_{\PS\times S}\ar[r] & \OO_{\CC}\ar[r] & 0}
\]
 and $[\OO_{\CC}]=\div^{*}[\OO_{\DD}]$. Thus we have
\begin{align*}
\hat{p}_{*}^{\PS}\left(\left[\OO_{\CC}\right]q_{S}^{*}\left[\OO_{s}\right]\right) & =\hat{p}_{*}^{\PS}\left(\div^{*}\left[\OO_{\DD}\right].\div^{*}q_{S}^{*}\left[\OO_{s}\right]\right)\\
 & =\div^{*}\hat{p}_{*}^{|\LL|}\left(\left[\OO_{\CC}\right].q_{S}^{*}\left[\OO_{s}\right]\right)\\
 & =\div^{*}\left(1-\left[\OO(-1)\right]\right)
\end{align*}
\end{proof}
\selectlanguage{american}%
We also have similar result for $\PX$ if we replace $\OO_{\CC}$
with $\OO_{\div\,\pi_{*}\FF}$.
\begin{prop}
\label{prop:ps1}Let $\OO_{s}$ be the structure sheaf of the points
$s\in S$. Then $\hat{p}([\OO_{\div\pi_{*}\FF}].q_{S}^{*}[\OO_{s}])=1-\div^{*}(\OO(-1))$
where $\OO(-1)$ is the tautological bundle of $|\LL|$ and $\hat{p}$
, $q_{S}$ are morphism from diagram \ref{diag:qs}.
\end{prop}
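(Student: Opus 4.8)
The proof runs parallel to that of Proposition~\ref{prop:ps0}: I will reduce the statement to the universal divisor on $|\LL|\times S$ and then pull everything back along $\div$. Throughout, $G=\CS$ acts trivially on $S$, on $|\LL|$, and on the universal divisor $\DD\subset|\LL|\times S$, and $\div\colon\PX\to|\LL|$ is equivariant for the trivial action on the target, so every manipulation below takes place equivariantly in $K^{T}(\PX\times S)$ and $K^{T}(\PX)$.

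The first step is to identify the relative divisor $\div\,\pi_{*}\FFF$ with the pullback $\DD_{\PPP}=(\div\times\id_{S})^{-1}\DD$ of the universal divisor. Since $\pi\colon X\to S$ is affine, $\pi_{*}$ is exact, so $\pi_{*}\FFF$ is flat over $\PX$ and fibrewise a one-dimensional coherent sheaf on $S$ (the supporting curve of $\FFF$ on each fibre being proper and affine, hence finite, over $S$); consequently its divisorial part is a relative effective Cartier divisor on $\PX\times S$ over $\PX$, fibrewise an element of $|\LL|$. By construction of $\div$ (which sends $(\FF,s)$ to $\div(\pi_{*}\FF)$, cf.\ Section~\ref{sec:Kool-Thomas-invariants}) the morphism classifying this family of divisors is $\div$ itself, so $\div\,\pi_{*}\FFF=\DD_{\PPP}$. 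Hence, either by the base change formula (Proposition~\ref{prop:BaseChange}, part (2), applied with $A=S$, $Y=|\LL|$, $D=\DD$) or simply by pulling back the structure sequence $0\to\LL^{*}\boxtimes\OO(-1)\to\OO\to\OO_{\DD}\to0$ of the universal divisor, we obtain
\[
[\OO_{\div\,\pi_{*}\FFF}]=1-\bigl[q_{S}^{*}\LL^{*}\otimes\hat{p}^{*}\div^{*}\OO(-1)\bigr]\in K^{T}(\PX\times S).
\]

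Next, since $q_{S}$ is flat, $q_{S}^{*}[\OO_{s}]=k_{*}[\OO_{\PX\times\{s\}}]$ for $k\colon\PX\times\{s\}\hookrightarrow\PX\times S$ the inclusion, and the projection formula (Proposition~\ref{prop:=00005BProjection-Formula=00005D}) gives
\[
[\OO_{\div\,\pi_{*}\FFF}]\cdot q_{S}^{*}[\OO_{s}]=k_{*}\bigl([\OO_{\PX\times\{s\}}]-[k^{*}q_{S}^{*}\LL^{*}\otimes k^{*}\hat{p}^{*}\div^{*}\OO(-1)]\bigr).
\]
Here $q_{S}\circ k$ is the constant map to $s$, so $k^{*}q_{S}^{*}\LL^{*}\cong\OO_{\PX\times\{s\}}$, while $\hat{p}\circ k$ is the identification $\PX\times\{s\}\cong\PX$, so $k^{*}\hat{p}^{*}\div^{*}\OO(-1)=\div^{*}\OO(-1)$. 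Pushing forward along $\hat{p}$ and using $\hat{p}\circ k=\id$ then yields
\[
\hat{p}_{*}\bigl([\OO_{\div\,\pi_{*}\FFF}]\cdot q_{S}^{*}[\OO_{s}]\bigr)=[\OO_{\PX}]-[\div^{*}\OO(-1)]=1-\div^{*}\OO(-1),
\]
which is the claim.

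The only point requiring genuine care is the first step: that $\div\,\pi_{*}\FFF$ equals the scheme-theoretic pullback $\DD_{\PPP}$ of the universal divisor, and not merely agrees with it on closed fibres. This is where flatness of $\pi_{*}\FFF$ over $\PX$ and the representability of the linear system $|\LL|$ by $\hilb_{\beta}(S)$ (which uses the standing assumption $b_{1}(S)=0$) enter; once this identification is in hand, the remainder is the same formal $K$-theoretic manipulation already carried out in Proposition~\ref{prop:ps0}, and it also makes transparent why the answer is independent of the chosen point $s$.
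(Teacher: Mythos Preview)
Your proof is correct and follows essentially the same approach as the paper: the paper's own proof simply notes that $\div\,\pi_{*}\FFF$ is by definition $\div^{-1}\DD$ and then says ``we can use exactly the same proof as the previous Proposition,'' which is precisely what you have written out. The only cosmetic difference is that the paper (in Proposition~\ref{prop:ps0}) first performs the computation on $|\LL|\times S$ and then pulls back along $\div$, whereas you compute directly on $\PX\times S$ after pulling back the structure sequence of $\DD$; the underlying manipulation with the projection formula and the identification $\hat p\circ k=\id$ is identical.
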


\begin{proof}
From the definition of the morphism $\div$, $\div\,\pi_{*}\FF$ is
exactly $\div^{-1}\DD$. Thus we can use exactly the same proof as
the previous Proposition.
\end{proof}
\selectlanguage{english}%
Later we will drop $\div^{*}$from $\div^{*}\OO(-1)$ for simplicity.
\selectlanguage{american}%

\subsection{Refinement of Kool-Thomas invariants}

Assume that $b_{1}(S)=0$. From Proposition \ref{prop:ps0} and Proposition
\ref{prop:ps1} , the contribution of $\PS$ to $\pxlmn$ and to $\bpxlmn$
are equal. Consider the contribution of $\PS$ to $\bpxlmn$ invariants,
i.e.

\[
\Xi=R\Gamma\left(\PS,\frac{\OO_{\PS}^{vir}\otimes K_{vir}^{\half}}{\bigwedge^{\bullet}(N^{vir})^{\vee}}\prod_{i=1}^{m}\frac{\tausssi}{\tweightb}\right).
\]
 \foreignlanguage{english}{On $\hilb_{\beta}(S)\times S$ we have
the following exact sequence 
\begin{equation}
\xymatrix{0\ar[r] & \OO\ar[r]^{s_{\CC}} & \OO(\CC)\ar[r] & \OO_{\CC}(\CC)\ar[r] & 0}
\label{eq:reduced}
\end{equation}
which induces the exact sequence 
\[
\xymatrix{H^{1}(\OO_{\CC}(\CC))\ar[r]^{\hat{\phi}} & H^{2}(\OO_{S})\ar[r] & H^{2}(\LL)}
.
\]
If $H^{2}(\LL)=0$ then $\hat{\phi}$ is surjective. Observe that
$R\pi_{H*}\OO_{\CC}(\CC)$ is the complex $\EE^{\bullet}$ from Subsection
2.2.1 when $\chi=2-h$ or equivalently when $n=1$. For $n>1$, it
was shown in Appendix A of \cite{KT:14} that $\EE^{\bullet}$ sits
in the exact triangle
\[
\xymatrix{R\pi_{H*}\OO_{\CC}(\CC)\ar[r] & \EE^{\bullet}\ar[r] & E^{\bullet}}
.
\]
Thus if $h^{2}(\OO_{S})>0$ then $\EE^{\bullet}$ contain a trivial
bundle so that $[\PS]^{vir}$ vanish. In particular, by virtual Riemann-Roch
the contribution of $\PS$ is zero.}

If $H^{2}(\OO_{S})=0$, $\EE_{red}^{\bullet}$ and $\EE^{\bullet}$
are quasi isomorphic. Let $P$\foreignlanguage{english}{ be the moduli
space $\PS.$ By the virtual Riemann-Roch theorem and by Lemma \ref{prop:ps1}
we then have}

\selectlanguage{english}%
\begin{align*}
ch^{G}\left(\Xi\right) & =\left(-\tweight^{-\half}\right)^{vd}\int_{[P]^{red}}\ch\left({\textstyle \bigwedge_{-\tweight}}\eered\left(\frac{{\textstyle \bigwedge_{-1}}\OO(-1)}{\tweightb}\right)^{m}\right).\td\left(T_{P}^{red}\right)
\end{align*}
where $T_{P}^{red}$ is the derived dual of $\eered$ and $\left(-\tweight^{-\half}\right)^{vd}$
should be understood as $\left(-e^{-\half t}\right)^{vd}$ where $t$
is the equivariant first Chern class of $\tweight$. Observe that
$ch^{G}\left(\Xi\right)$ can be computed whenever $H^{2}(\LL)=0$
without assuming $h^{2}(\OO_{S})=0$. Thus for $S$ with $b_{1}(S)=0$
and a line bundle $\LL$ with $H^{2}(\LL)=0$, we define $\pslmn=ch^{G}(\Xi)$.

The $K$-theory class of $\EE_{red}^{\bullet}$ is given by equation
(\ref{eq:K-class}). Since $\OO(\CC)=\LL\boxtimes\OO(1)$, by the
projection formula we have $F=H^{0}(\LL(A)\rvert_{A})\otimes\OO(1)$.
From the exact sequence 
\[
\xymatrix{0\ar[r] & \OO(\CC)\ar[r] & \OO(\CC+A)\ar[r] & \OO_{\pi_{S}^{-1}A}(\CC+\pi_{S}^{-1}A)\ar[r] & 0}
\]
on $P$, and since $H^{i>0}(\LL(A)\rvert_{A})=0$, we conclude that
\begin{align}
F & =\OO(1)^{\oplus\chi(\LL(A))-\chi(\LL)}\label{eq:4}
\end{align}
And again by projection formula we have $\OO(\CC)^{[n]}=\LL^{[n]}\boxtimes\OO(1)$
. By Theorem A.7 of \cite{KT:14} we then can compute $\pslmn$ as
\begin{multline}
\tweightc^{v}\int\limits _{S^{[n]}\times|\LL(A)|}H^{\chi(\LL(A))-\chi(\LL)}c_{n}(\OO(\DD-A)^{[n]})\\
\ch\left(\frac{{\textstyle \bigwedge_{-\tweight}}\eered\left(\bigwedge_{-1}\OO(-1)\right)^{m}}{\left(\tweightb\right)^{m}}\right)\td\left(T_{P}^{red}\right)\label{eq:5}
\end{multline}
where $H=c_{1}(\OO(1)$ and $n=\chi+h-1$.
\begin{thm}
\label{prop:evaluation}$\pslmn\rvert_{\tweight=1}=(-1)^{vd}\int_{\SN\times\PP^{\varepsilon}}c_{n}(\LL^{[n]}\otimes\OO(1))\frac{c_{\bullet}\left(T\SN\right)c_{\bullet}(\OO(1))^{\chi(\LL)}}{c_{\bullet}\left(\LL^{\{n]}\boxtimes\OO(1)\right)}$
where $\varepsilon=\chi(\LL)-1-m$. Thus we can relate Kool-Thomas
invariants with our invariants as follows:
\begin{eqnarray*}
\mathcal{P}_{\chi,\beta}^{red}\left(S,[pt]^{m}\right) & = & \left(-1\right)^{m}t^{m+1-\chi(\OO_{S})}\left.\pslmn\right|_{\tweight=1}.
\end{eqnarray*}
\end{thm}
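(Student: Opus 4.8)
The plan is to take the explicit integral formula~(\ref{eq:5}) for $\pslmn$, make every bundle occurring in it concrete, shrink the space of integration, and then perform the substitution $\tweight=1$ by hand. First I would unwind (\ref{eq:5}) over $\SN\times\hilb_{\gamma}(S)=\SN\times|\LL(A)|=\SN\times\PP^{\chi(\LL(A))-1}$: there $\OO(\DD-A)^{[n]}\cong\LL^{[n]}(1)$, by (\ref{eq:4}) $F\cong\OO(1)^{\oplus(\chi(\LL(A))-\chi(\LL))}$, and by (\ref{eq:K-class}) the reduced virtual class has $K$-theory class $[\eered]=[\Omega_{\SN\times|\LL(A)|}]-[(\LL^{[n]}(1))^{\vee}]-[F^{\vee}]$, whence $T_{P}^{red}=(\eered)^{\vee}=[T_{\SN\times|\LL(A)|}]-[\LL^{[n]}(1)]-[F]$. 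Feeding in the Euler sequence of $\PP^{\chi(\LL(A))-1}$, the spurious copies of $\OO(1)$ collapse and I obtain $\td(T_{P}^{red})=\td(T\SN)\,\td(\OO(1))^{\chi(\LL)}/\td(\LL^{[n]}(1))$ and, identically, $c_{\bullet}(T_{P}^{red})=c_{\bullet}(T\SN)\,c_{\bullet}(\OO(1))^{\chi(\LL)}/c_{\bullet}(\LL^{[n]}(1))$; the latter is exactly the rational Chern class appearing in the statement.

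Next I would reduce the projective factor. With $H=\div^{*}c_{1}(\OO_{|\LL(A)|}(1))$, the factor $H^{\chi(\LL(A))-\chi(\LL)}$ in (\ref{eq:5}), pushed into the fundamental class, carries the integral onto $\SN\times\PP^{\chi(\LL)-1}$; and $\ch\bigl(({\textstyle\bigwedge_{-1}}\OO(-1))^{m}\bigr)=(1-e^{-H})^{m}=H^{m}g(H)$ with $g(H)=\bigl((1-e^{-H})/H\bigr)^{m}$, $g(0)=1$, so pushing $H^{m}$ in as well carries it onto $\SN\times\PP^{\varepsilon}$, $\varepsilon=\chi(\LL)-1-m$. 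Writing $v:=\rk\eered=n+\chi(\LL)-1$ (read off from (\ref{eq:K-class}); this is the virtual dimension of $\PS$, i.e. the $vd$ of the statement), Proposition \ref{K^vir} turns (\ref{eq:5}) into
\[
\pslmn=\tweightc^{v}\left(\tweightb\right)^{-m}\int_{\SN\times\PP^{\varepsilon}}c_{n}(\LL^{[n]}(1))\,g(H)\,\ch\bigl({\textstyle\bigwedge_{-\tweight}}\eered\bigr)\,\td(T_{P}^{red}).
\]

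The heart of the matter is the specialization $\tweight=1$. From $1-\tweight e^{a}=(1-\tweight)-\tweight(e^{a}-1)$ one sees that for a rank-$r$ bundle $E$ with Chern roots $a_{i}$, the class $(1-\tweight)^{-r}\ch(\bigwedge_{-\tweight}E)=\prod_{i}\bigl(1-w(e^{a_{i}}-1)\bigr)$ with $w:=\tweight/(1-\tweight)$ lies in $H^{*}[w]$, and its cohomological degree-$k$ component is a polynomial in $w$ of degree $\le k$ whose top coefficient is $(-1)^{k}c_{k}(E)$ — only the linear term of each $e^{a_{i}}-1$ feeds the top power of $w$. By Whitney multiplicativity the same holds verbatim for the virtual class $\eered$ of rank $v$. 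Since $1-\tweight=\tweight^{1/2}\left(\tweightb\right)$, we have $\tweightc^{v}\left(\tweightb\right)^{-m}=(-1)^{v}\tweight^{(m-v)/2}(1-\tweight)^{-m}$, so up to the constant $(-1)^{v}$ the integrand is controlled by $\tweight^{(m-v)/2}(1-\tweight)^{v-m}\bigl[(1-\tweight)^{-v}\ch(\bigwedge_{-\tweight}\eered)\bigr]$. Because $c_{n}(\LL^{[n]}(1))$ contributes exactly degree $n$ while $g(H)$ and $\td(T_{P}^{red})$ have no negative-degree part and degree-$0$ parts equal to $1$, only cohomological degrees $\le n+\varepsilon=v-m$ of the bracket enter the integral; its degree-$(v-m)$ component has $w$-degree $\le v-m$, and $(1-\tweight)^{v-m}w^{k}=\tweight^{k}(1-\tweight)^{v-m-k}$ tends to $0$ for $k<v-m$ and to $1$ for $k=v-m$ as $\tweight\to 1$. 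Hence in the limit exactly the term with $\ch(\bigwedge_{-\tweight}\eered)$ in degree $v-m$ and $g,\td$ in degree $0$ survives, contributing the top $w$-coefficient $(-1)^{v-m}c_{v-m}(\eered)$, so that
\[
\pslmn|_{\tweight=1}=(-1)^{v}(-1)^{v-m}\int_{\SN\times\PP^{\varepsilon}}c_{n}(\LL^{[n]}(1))\,c_{v-m}(\eered)=(-1)^{m}\int_{\SN\times\PP^{\varepsilon}}c_{n}(\LL^{[n]}(1))\,c_{v-m}(\eered).
\]
Replacing $c_{v-m}(\eered)=(-1)^{v-m}c_{v-m}(T_{P}^{red})$ and using the total Chern class of $T_{P}^{red}$ computed in the first step (together with $v-m=n+\varepsilon$, so that only the top-degree part survives the integral), the overall sign becomes $(-1)^{m}(-1)^{v-m}=(-1)^{v}$ and one lands on the first asserted identity.

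The relation to Kool--Thomas is then pure bookkeeping: (\ref{eq:Kool-Thomas}) writes $\mathcal{P}_{\chi,\beta}^{red}(S,[pt]^{m})=\Pslmn$ as $t^{n+\chi(\LL)-\chi(\OO_{S})}(-1/t)^{v-m}$ times the very same integral over $\SN\times\PP^{\varepsilon}$ (using $\PP^{\chi(\LL)-1-m}=\PP^{\varepsilon}$, $c_{\bullet}(\OO(1)^{\oplus\chi(\LL)})=c_{\bullet}(\OO(1))^{\chi(\LL)}$, and $n+\chi(\LL)-1-m=v-m$), so combining with the formula just obtained for $\pslmn|_{\tweight=1}$ and using $n+\chi(\LL)-v=1$ and $(-1)^{2v-m}=(-1)^{m}$ gives $\mathcal{P}_{\chi,\beta}^{red}(S,[pt]^{m})=(-1)^{m}t^{m+1-\chi(\OO_{S})}\pslmn|_{\tweight=1}$. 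The step I expect to be the genuine obstacle is the third one: tracking, cohomological degree by cohomological degree, the exact order to which $\ch(\bigwedge_{-\tweight}\eered)$ vanishes at $\tweight=1$, its cancellation against $\left(\tweightb\right)^{-m}$, and the fact that precisely one monomial survives the limit with a consistent sign; one also needs the standing hypotheses ($\LL$ being $2\delta+1$-very ample, $H^{i>0}(\LL)=0$, and $H^{0}(\LL)-1-\delta\le m\le H^{0}(\LL)-1$, whence $0\le\varepsilon\le\delta$ and $v-m=n+\varepsilon\ge 0$) to legitimize the two reductions in the second step.
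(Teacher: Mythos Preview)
Your argument is correct and follows the same overall arc as the paper's proof: reduce the integral via $H$-powers to the smaller projective factor, isolate the order of vanishing of $\ch(\bigwedge_{-\tweight}\eered)$ at $\tweight=1$ cohomological-degree by cohomological-degree, and observe that exactly the top Chern class of the virtual (co)tangent survives. The one genuine difference is in the key step: where the paper invokes Proposition~5.3 of \cite{FG:10} to write $\ch(\bigwedge_{-\tweight}\eered)\td(T_P^{red})=\sum_{l}(1-\tweight)^{d-l}\XX^{l}$ with $\XX^{l}=c_{l}(T_P^{red})+(\text{higher degree})$ and then does the degree count, you instead substitute $w=\tweight/(1-\tweight)$ and observe directly that $(1-\tweight)^{-v}\ch(\bigwedge_{-\tweight}\eered)=\prod_{i}(1-w(e^{a_i}-1))\big/\prod_{j}(1-w(e^{b_j}-1))$ has degree-$k$ part polynomial in $w$ of degree $\le k$ with leading coefficient $(-1)^{k}c_{k}(\eered)$. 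This is precisely the content of the cited result, proved by hand; your treatment is therefore slightly more self-contained, at the cost of having to track $g(H)$ and $\td(T_P^{red})$ separately (the paper bundles the Todd class into $\XX_{-\tweight}$ from the start). The bookkeeping for the Kool--Thomas comparison is identical to the paper's.
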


\begin{proof}
Let $\XX_{-\tweight}(T_{P}^{red}):=\ch\left(\bigwedge_{-\tweight}\eered\right)\td\left(T_{P}^{red}\right)$
and let $d:=\rk\EE_{red}^{\bullet}=n+\chi(\LL)-1$ be the virtual
dimension of $P$ so that we can rewrite (\ref{eq:5}) as 
\begin{equation}
\left(-1\right)^{m}\tweightc^{d-m}\int_{\SN\times\PP^{\chi(\LL)-1}}c_{n}\left(\LL^{[n]}\boxtimes\OO(1)\right)\XX_{-\tweight}\left(T_{P}^{red}\right)\ch\left(\frac{{\textstyle \bigwedge_{-1}}\left(\OO(-1)\right)}{\tweighta}\right)^{m}\label{eq:6}
\end{equation}
By Proposition 5.3 of \cite{FG:10} we can write 
\[
\XX_{-\tweight}(T_{P}^{red})=\sum_{l=0}^{d}\left(\tweighta\right)^{d-l}\XX^{l}
\]
where $\XX^{l}=c_{l}(T_{P}^{red})+b_{l}$where $b_{l}\in A^{>l}(P)$.
Then we can write $\pslmn$ as 
\[
\left(-1\right)^{m}\tweightc^{d-m}\int_{\SN\times\PP^{\chi(\LL)-1}}c_{n}(\LL^{[n]}\boxtimes\OO(1))\sum_{l=0}^{d}(\tweighta)^{d-m-l}\XX^{l}\ch\left({\textstyle \bigwedge_{-1}}\left(\OO(-1)\right)\right)^{m}.
\]
Note that $\ch\left({\textstyle \bigwedge_{-1}}\left(\OO(-1)\right)\right)^{m}=H^{m}+O\left(H^{m+1}\right)$
so that 
\[
\int_{\SN\times\PP^{\chi(\LL)-1}}c_{n}\left(\LL^{[n]}\boxtimes\OO(1)\right)\XX^{l}\ch\left({\textstyle \bigwedge_{-1}}\left(\OO(-1)\right)\right)^{m}=0
\]
 for $l>d-m$. Thus the summation ranges from $l=0$ to $l=d-m$.
In this range the power of $(\tweighta)$ is positive except when
$l=d-m$ in which the power of $(\tweighta)$ is zero. Thus we can
conclude that $\pslmn\rvert_{\tweight=1}$equals to 
\[
\left(-1\right)^{m}\tweightc^{d-m}\int_{\SN\times\PP^{\chi(\LL)-1}}c_{n}\left(\LL^{[n]}\boxtimes\OO(1)\right)\XX^{d-m}\ch\left({\textstyle \bigwedge_{-1}}\left(\OO(-1)\right)\right)^{m}.
\]
 Since $b_{d-m}\in A^{>d-m}(P)$ and $c_{d-m}(T_{P}^{red})\in A^{d-m}(P)$
we have
\[
\int_{\SN\times\PP^{\chi(\LL)-1}}c_{n}\left(\LL^{[n]}\boxtimes\OO(1)\right)b_{d-m}\ch\left({\textstyle \bigwedge_{-1}}\left(\OO(-1)\right)\right)^{m}=0
\]
and 
\[
\int_{\SN\times\PP^{\chi(\LL)-1}}c_{n}\left(\LL^{[n]}\boxtimes\OO(1)\right)c_{d-m}(T_{P}^{red})H^{k}=0
\]
for $k>m$ and we can conclude that 
\begin{align*}
\pslmn\rvert_{\tweight=1} & =\left(-1\right)^{\half d}\int_{\SN\times\PP^{\chi(\LL)-1}}c_{n}\left(\LL^{[n]}\boxtimes\OO(1)\right).H^{m}.c_{d-m}(T_{P}^{red})
\end{align*}
From (\ref{eq:K-class}) and (\ref{eq:4}) we have 
\[
T_{P}^{red}=T\left(\SN\right)+\OO(1)^{\chi(\LL(A))}-\OO-\LL^{[n]}\boxtimes\OO(1)-\OO(1)^{\chi(\LL(A))-\chi(\LL)}
\]
and 
\[
c_{d-m}(T_{P}^{red})=\coeff_{t^{d-m}}\left[\frac{c_{t}\left(T\SN\right)c_{t}\left(\OO(1)\right)^{\chi(\LL)}}{c_{t}\left(\LL^{[n]}\boxtimes\OO(1)\right)}\right].
\]
Finally we conclude that 
\[
\pslmn\rvert_{\tweight=1}=\left(-1\right)^{-\half d}\int_{\SN\times\PP^{\delta}}c_{n}(\LL^{[n]}\otimes\OO(1))\frac{c_{\bullet}\left(T\SN\right)c_{\bullet}(\OO(1))^{\chi(\LL)}}{c_{\bullet}\left(\LL^{\{n]}\boxtimes\OO(1)\right)}
\]
\end{proof}
Let $X_{-y}(x)\in\QQ[[x,y]]$ defined by 
\[
X_{-y}(x):=\frac{x\left(y^{-\half}-y^{\half}e^{-x(y^{-\half}-y^{\half})}\right)}{1-e^{-x\left(y^{-\half}-y^{\half}\right)}}.
\]
For a vector bundle $E$ on a scheme $Y$of rank $r$ with Chern roots
$x_{1},\ldots,x_{r}$ we will use $X_{-y}(E)$ to denote 
\[
\prod_{i=1}^{r}\frac{x_{i}\left(y^{-\half}-y^{\half}e^{-x_{i}(y^{-\half}-y^{\half})}\right)}{1-e^{-x_{i}\left(y^{-\half}-y^{\half}\right)}}.
\]
Observe that $X_{-y}$ is additive on an exact sequence of vector
bundle. Thus we can extend $X_{y}$ to $K(Y)$. For a class $\beta\in K(Y)$
we can write $\beta=\sum_{i}[E_{i}^{+}]-\sum_{j}[E_{j}^{-}]$ for
vector bundles $E_{i}^{+},E_{j}^{-}$ and we can define $X_{-y}(\beta)=\frac{\prod_{i}X_{-y}(E_{i}^{+})}{\prod_{j}X_{-y}(E_{j}^{-})}.$
For a proper nonsingular scheme $Y$ with tangent bundle $T_{Y}$
\[
\int_{Y}X_{-y}\left(T_{Y}\right)=\left(\frac{1}{y}\right)^{\half d}\sum_{i}(-1)^{p+q}y^{q}h^{p,q}(Y)
\]
 where $h^{p,q}(Y)$ are the Hodge number of $Y$ i.e. $\int_{Y}X_{-y}\left(T_{Y}\right)$
is the normalized $\chi_{-y}$ genus.
\begin{thm}
\label{prop:2}
\begin{multline}
\pslmn=\\
(-1)^{vd}\int\limits _{\left[P\right]^{red}}\frac{X_{-\tweight}\left(T\SN\right)}{X_{-\tweight}\left(\ee\right)}X_{-\tweight}\left(\OO(1)\right)^{\delta+1}\left(\frac{\tweightb e^{-H\left(\tweightb\right)}}{\tweightb}\right)^{m}H^{m}\label{eq:7}
\end{multline}
 where $[P]^{red}$ is $c_{n}\left(\LL^{[n]}\boxtimes\OO(1)\right)\cap[S^{[n]}\times\PP^{\chi(\LL)-1}]$.
\end{thm}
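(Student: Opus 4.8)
The plan is to start from the intermediate expression for $\pslmn$ already reached inside the proof of Theorem \ref{prop:evaluation}, namely formula (\ref{eq:6}),
\[
\pslmn=\left(-1\right)^{m}\tweightc^{\,d-m}\int_{\SN\times\PP^{\chi(\LL)-1}}c_{n}\!\left(\ee\right)\,\XX_{-\tweight}\!\left(T_{P}^{red}\right)\,\ch\!\left(\frac{\bigwedge_{-1}\OO(-1)}{\tweighta}\right)^{\!m},
\]
where $d:=\rk\EE_{red}^{\bullet}=n+\chi(\LL)-1=vd$. In particular the identification of $[P]^{red}$ with $c_{n}(\ee)\cap[\SN\times\PP^{\chi(\LL)-1}]$ — which goes through Theorem A.7 of \cite{KT:14} and the vanishing of the $F=\OO(1)^{\chi(\LL(A))-\chi(\LL)}$ directions, cf. (\ref{eq:4}) — is already built into (\ref{eq:6}), so nothing of that kind has to be redone. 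The remaining task is purely to convert $\XX_{-\tweight}$ and the point insertion into the renormalised class $X_{-\tweight}$ of the statement. The bridge is the elementary power-series identity $X_{-\tweight}(x)=\tweighta^{-1}\phi_{\tweight}(\tweightb\,x)$, where $\phi_{\tweight}(u):=u(1-\tweight e^{-u})/(1-e^{-u})$, together with $\phi_{\tweight}(0)=\tweighta$, $X_{-\tweight}(0)=1$, and the observation that for a $K$-class $V$ with Chern roots $\xi_{i}$ one has $\XX_{-\tweight}(V)=\ch(\bigwedge_{-\tweight}V^{\vee})\td(V)=\prod_{i}\phi_{\tweight}(\xi_{i})$, which is exactly the class appearing in (\ref{eq:6}).

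Next I would plug in the explicit virtual tangent bundle $T_{P}^{red}=T\SN+\OO(1)^{\chi(\LL)}-\OO-\ee$ obtained in the proof of Theorem \ref{prop:evaluation} from (\ref{eq:K-class}) and (\ref{eq:4}), and introduce the ring endomorphism $R_{c}$ of $A^{*}(\SN\times\PP^{\chi(\LL)-1})\otimes\QQ(\tweight^{\half})$ which rescales a class of pure cohomological degree $k$ by $c^{k}$. Since $X_{-\tweight}$, $\XX_{-\tweight}$ and $R_{c}$ are all multiplicative over $K$-theory, the bridge identity reads $\XX_{-\tweight}(V)=\tweighta^{\,\rk V}R_{1/\tweightb}(X_{-\tweight}(V))$, and likewise $\ch(\bigwedge_{-1}\OO(-1))=1-e^{-H}=R_{1/\tweightb}(1-e^{-\tweightb H})$. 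Feeding these into (\ref{eq:6}), using $X_{-\tweight}(\OO)=1$, and summing the exponents of $\tweighta$ (which total $d-m$, because $\rk T_{P}^{red}=d$), the cohomological part of the integrand becomes
\[
\tweighta^{\,d-m}\,R_{1/\tweightb}\!\left(\frac{X_{-\tweight}(T\SN)}{X_{-\tweight}(\ee)}\,X_{-\tweight}(\OO(1))^{\chi(\LL)}\,\bigl(1-e^{-\tweightb H}\bigr)^{m}\right).
\]

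The decisive step is to pull $R_{1/\tweightb}$ outside the integral. Because $[P]^{red}$ is prescribed as the cap product $c_{n}(\ee)\cap[\SN\times\PP^{\chi(\LL)-1}]$, integration against it extracts the degree-$d$ component, on which $R_{1/\tweightb}$ is just multiplication by $\tweightb^{-d}$, and $R_{1/\tweightb}$ commutes past the homogeneous degree-$n$ class $c_{n}(\ee)$; hence $\int_{[P]^{red}}R_{1/\tweightb}(\omega)=\tweightb^{-d}\int_{[P]^{red}}\omega$. The scalar then collapses, using $\tweightc=-\tweight^{-\half}$ and $\tweightb=\tweight^{-\half}\tweighta$, via $(-1)^{m}\tweightc^{\,d-m}\tweighta^{\,d-m}\tweightb^{-d}=(-1)^{d}\tweightb^{-m}$. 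Finally, with $\chi(\LL)=\delta+1+m$ one splits $X_{-\tweight}(\OO(1))^{\chi(\LL)}=X_{-\tweight}(\OO(1))^{\delta+1}X_{-\tweight}(\OO(1))^{m}$ and merges the last $m$ copies with the point insertions using $X_{-\tweight}(\OO(1))\bigl(1-e^{-\tweightb H}\bigr)=H\bigl(\tweight^{-\half}-\tweight^{\half}e^{-\tweightb H}\bigr)$, which, together with $vd=d$, yields the asserted formula.

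The genuinely substantive part is the middle one: keeping straight the two closely related genus-type classes $\XX_{-\tweight}$ (the literal $\ch\cdot\td$ of virtual Riemann--Roch) and $X_{-\tweight}$ (the renormalised class of the statement), and tracking the half-integer powers of $\tweight$ produced by the Chern-root rescaling $x\mapsto\tweightb x$ all the way through. The one point that needs argument rather than mere computation is the legitimacy of taking $R_{1/\tweightb}$ through the integral, and this is exactly why it is essential that $[P]^{red}$ be presented as a cap product with the homogeneous class $c_{n}(\ee)$ rather than only as an abstract virtual cycle.
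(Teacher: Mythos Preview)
Your proof is correct and follows essentially the same route as the paper's. Both arguments start from (\ref{eq:6}), invoke the $K$-class (\ref{eq:K-class}) for $T_{P}^{red}$, and exploit the single power-series identity $\phi_{-\tweight}(x)=(1-\tweight)\,X_{-\tweight}(x/\tweightb)$ together with the fact that integration over $[P]^{red}$ extracts the degree-$d$ component; the paper phrases this last point as ``substituting $x\mapsto x\,\tweightb$ in each Chern root and comparing the coefficient of $q^{n+\chi(\LL)-1}$'', which is exactly your ring endomorphism $R_{1/\tweightb}$ pulled through the integral. Your packaging via $R_{c}$ is a tidy way to keep the bookkeeping of half-integer powers of $\tweight$ straight, but the underlying mechanism and the final merging $X_{-\tweight}(\OO(1))(1-e^{-\tweightb H})=H\bigl(\tweight^{-1/2}-\tweight^{1/2}e^{-\tweightb H}\bigr)$ are identical to the paper's.
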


\selectlanguage{american}%
\begin{proof}
$\pslmn$ equals to (\ref{eq:6}), and we can rewrite it as

\selectlanguage{english}%
\[
\pslmn=\left(-1\right)^{vd}\int_{\left[P\right]^{red}}\frac{\prod_{i=1}^{2n+\chi(\LL)-1}\frac{\phi_{-\tweight}(\alpha_{i})}{\tweight^{1/2}}}{\prod_{i=1}^{n}\frac{\phi_{-\tweight}(\beta_{i})}{\tweight^{1/2}}}\left(\frac{1-e^{-H}}{\tweightb}\right)^{m}
\]

where $\phi_{-\tweight}(x)=\frac{x\left(\tweighta e^{-x}\right)}{1-e^{-x}}$
and $\alpha_{i}$ are the Chern roots of $T(S^{[n]}\times\PP^{\chi(\LL)-1})$
and $\beta_{i}$ are the Chern roots of $\LL^{[n]}\boxtimes\OO(1)$.
Let's define
\[
\bar{\phi}_{-\tweight}(x):=\frac{\phi_{-\tweight}(x)}{\tweight^{1/2}}=\frac{x\left(\tweightb e^{-x}\right)}{1-e^{-x}}=\sum_{i\geq0}\bar{\phi}_{i}x^{i}.
\]

Note that this power series starts with $\tweightb$. By substituting
$x$ with $x\left(\tweightb\right)$ and dividing it by $\left(\tweightb\right)$
we have the power series 
\[
X_{-\tweight}(x)=\frac{x\left(\tweightb e^{-x\left(\tweightb\right)}\right)}{1-e^{-x\left(\tweightb\right)}}=\sum_{i\geq0}\xi_{i}x^{i}
\]
such that $\xi_{0}=1$ and $\xi_{i}=\bar{\phi}_{i}\left(\tweightb\right)^{i-1}$.
Thus by substituting $x$ in 
\[
\frac{\prod_{i=1}^{2n+\chi(\LL)-1}\frac{\phi_{-y}(\alpha_{i})}{\tweight^{1/2}}}{\prod_{i=1}^{n}\frac{\phi_{-y}(\beta_{i})}{\tweight^{1/2}}}\left(\frac{1-e^{-H}}{\tweightb}\right)^{m}
\]
 with $x\left(\tweightb\right)$ whenever $x=\alpha_{i},\beta_{i},H$
and dividing it by 
\[
\left(\tweightb\right)^{n+\chi(\LL)-1}
\]
so that the coefficients of $q^{n+\chi(\LL)-1}$ in 
\[
\frac{\prod_{i=1}^{2n+\chi(\LL)-1}X_{-\tweight}(\alpha_{i}q)}{\prod_{i=1}^{n}X_{-\tweight}(\beta_{i}q)}\left(\frac{1-e^{-Hq\left(\tweightb\right)}}{\tweightb}\right)^{m}
\]
and 
\[
\frac{\prod_{i=1}^{2n+\chi(\LL)-1}\bar{\phi}_{-\tweight}(\alpha_{i}q)}{\prod_{i=1}^{n}\bar{\phi}_{-\tweight}(\beta_{i}q)}\left(\frac{1-e^{-Hq}}{\tweightb}\right)^{m}
\]
are the same. Since $[T\PP^{\chi(\LL)-1}]=[\oplus_{i=1}^{\chi(\LL)}\OO(1)]-[\OO_{\PP^{\chi(\LL)-1}}],$
$\pslmn$ equals 
\begin{multline*}
\left(-1\right)^{vd}\int_{[P]^{red}}\frac{X_{-\tweight}\left(T\SN\right)X_{-\tweight}\left(\OO(1)\right)^{\chi(\LL)}}{X_{-\tweight}\left(\LL^{[n]}\boxtimes\OO(1)\right)}\left(\frac{1-e^{-H\left(\tweightb\right)}}{\tweightb}\right)^{m}=\\
\left(-1\right)^{vd}\int_{[P]^{red}}\frac{X_{-\tweight}\left(T\SN\right)X_{-\tweight}\left(\OO(1)\right)^{\delta+1}}{X_{-\tweight}\left(\LL^{[n]}\boxtimes\OO(1)\right)}\left(\frac{\tweightb e^{-H\left(\tweightb\right)}}{\tweightb}\right)^{m}H^{m}
\end{multline*}
\end{proof}
\selectlanguage{english}%
In the following Corollary we want to complete the computation of
$\pxlmn$.
\begin{cor}
\label{cor:Computation}Given a positive integer $\delta$, let $S$
be a smooth projective surface with $b_{1}(S)=0$. Let $\LL$ be a
$2\delta+1$-very ample line bundle on $S$ with $c_{1}(\LL)=\beta$
and $H^{i}(\LL)=0$ for $i>0$. Let $X=K_{S}$ be the canonical line
bundle over $S$. Then for $m=\chi(\LL)-1-\delta$ points $s_{1},\ldots,s_{m}$
which is not necessarily different
\begin{multline*}
\bpxlmn=\\
\left(-1\right)^{vd}\int_{\left[P\right]^{red}}\frac{X_{-\tweight}\left(T\SN\right)X_{-\tweight}\left(\OO(1)\right)^{\delta+1}}{X_{-\tweight}\left(\ee\right)}\left(\frac{\tweightb e^{-H\left(\tweightb\right)}}{\tweightb}\right)^{m}H^{m}
\end{multline*}
where $[P]^{red}=c_{n}(\LL^{[n]}\boxtimes\OO(1))\cap[S^{[n]}\times\PP^{\chi(\LL)-1}]$
for $m\geq H^{0}(\LL)-1-\delta$.

\selectlanguage{american}%
If additionally $\OO_{\CC_{\FFF}}$ is flat over $\PX$ and $s_{1},\dots,s_{m}$
are closed points of $S$ in general position such that all curves
on $S$ that pass through all $m$ points are reduced and irredcible
then $\pxlmn$ is given by the same formula.
\end{cor}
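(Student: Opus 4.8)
The plan is to obtain the corollary by combining the vanishing statement of Proposition \ref{prop:assumption} with the explicit evaluation of Theorem \ref{prop:2}; the substantive work is already done, so what remains is to match up hypotheses and normalisations.

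First I would check that the hypotheses of the corollary are exactly those required by Proposition \ref{prop:assumption}. Since $H^{i}(\LL)=0$ for $i>0$ we have $\chi(\LL)=h^{0}(\LL)=H^{0}(\LL)$, so the choice $m=\chi(\LL)-1-\delta$ meets the requirement $m\geq H^{0}(\LL)-1-\delta$, and all the remaining hypotheses ($b_{1}(S)=0$, $\LL$ a $2\delta+1$-very ample line bundle with $c_{1}(\LL)=\beta$, $X=K_{S}$) are literally the same. Proposition \ref{prop:assumption} then gives that each connected component $R$ of $\PX^{\CS}$ with $R\neq\PS$ contributes $0$, so in $\QQ(\tweight^{\half})$ we have
\[
\bpxlmn=R\Gamma\left(\PS,\frac{\OO_{\PS}^{vir}}{\bigwedge^{\bullet}\left(N_{vir}^{\bullet}\right)^{\vee}}K_{vir}^{\half}\vert_{\PS}\otimes\prod_{i=1}^{m}\frac{\tausssi}{\tweightb}\right)=\Xi .
\]
Now $\pslmn$ was defined to be $ch^{G}(\Xi)$, and $ch^{G}\colon\QQ(\tweight^{\half})\to\QQ((t))$, $\tweight^{\half}\mapsto e^{\half t}$, is injective (the same injectivity used earlier to pass from $\taus|_{R}$ to $\tauss|_{R}$). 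The right-hand side of (\ref{eq:7}) makes sense verbatim as an element of $\QQ(\tweight^{\half})$, since, as in the proof of Theorem \ref{prop:2}, $X_{-\tweight}(x)=\sum_{i}\xi_{i}x^{i}$ with $\xi_{i}=\bar{\phi}_{i}\left(\tweightb\right)^{i-1}\in\QQ[\tweight^{\half},\tweight^{-\half}]$, and $e^{-H(\tweightb)}$ likewise expands with coefficients in that ring; applying $ch^{G}$ to this element reproduces exactly the $\QQ((t))$-valued formula of Theorem \ref{prop:2}. Thus Theorem \ref{prop:2} reads $ch^{G}(\Xi)=ch^{G}$ of that right-hand side, and injectivity forces $\Xi$ to equal it. Together with $\bpxlmn=\Xi$ this gives the first assertion, with $[P]^{red}=c_{n}(\LL^{[n]}\boxtimes\OO(1))\cap[S^{[n]}\times\PP^{\chi(\LL)-1}]$ as in Theorem \ref{prop:2}.

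For the statement about $\pxlmn$, I would invoke the $\pxlmn$-clause of Proposition \ref{prop:assumption}: under the extra hypotheses that $\OO_{\CC_{\FFF}}$ is flat over $\PX$ and that $s_{1},\dots,s_{m}$ are in general position with all curves in $|\LL|$ through the $m$ points irreducible, the contribution to $\pxlmn$ of every $R\neq\PS$ vanishes, so $\pxlmn$ equals the contribution of $\PS$ to $\pxlmn$. On $\PS$ the two point-insertion classes agree: Proposition \ref{prop:ps0} gives $\taus|_{\PS}=1-[\div^{*}\OO(-1)]$ and Proposition \ref{prop:ps1} gives $\tausss|_{\PS}=1-[\div^{*}\OO(-1)]$, so $\frac{\tausi}{\tweightb}\big|_{\PS}=\frac{\tausssi}{\tweightb}\big|_{\PS}$ and the contribution of $\PS$ to $\pxlmn$ is precisely $\Xi$. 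Therefore $\pxlmn=\Xi=\bpxlmn$, and the same formula holds.

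The one point requiring attention rather than being purely formal is the bookkeeping of normalisations between the definition of $\bpxlmn$, the expression for $\Xi$ coming from virtual Riemann--Roch, and the final formula: the $\left(\tweightb\right)^{-m}$ factors, the sign $(-1)^{vd}$ (with $vd=\rk\EE_{red}^{\bullet}=n+\chi(\LL)-1$, the virtual dimension appearing in Theorem \ref{prop:2}), and the substitution $H\mapsto H(\tweightb)$ that turns $\ch\big(\bigwedge_{-1}\OO(-1)\big)^{m}$ into $\big(\tweightb\,e^{-H(\tweightb)}\big)^{m}H^{m}/(\tweightb)^{m}$. These are exactly the manipulations carried out in the proofs of Theorem \ref{prop:evaluation} and Theorem \ref{prop:2}, so no new computation is required; granting that bookkeeping, the corollary follows at once from Proposition \ref{prop:assumption} and Theorem \ref{prop:2}.
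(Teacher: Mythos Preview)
Your proof is correct and follows essentially the same route as the paper's own proof, which is the one-liner ``By Proposition \ref{prop:assumption} $\bpxlmn=\pslmn$. Similarly for $\pxlmn$,'' combined implicitly with Theorem \ref{prop:2}. You are in fact more careful than the paper: you explicitly address the passage between $\Xi\in\QQ(\tweight^{\half})$ and $\pslmn:=ch^{G}(\Xi)\in\QQ((t))$ via the injectivity of $ch^{G}$, a point the paper silently conflates.
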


\selectlanguage{american}%
\begin{proof}
By Proposition \ref{prop:assumption} $\bpxlmn=\pslmn$. Similarly
for $\pxlmn$.
\end{proof}
In \cite{GS:14,GS:15}, for every smooth projective surface $S$ and
line bundle $\LL$ on $S$, Göttsche and Shende defined the following
power series 
\[
D^{S,\LL}(x,y,w):=\sum_{n\geq0}w^{n}\int_{\SN}X_{-y}\left(T\SN\right)\frac{c_{n}\left(\LL^{[n]}\otimes e^{x}\right)}{X_{-y}\left(\LL^{[n]}\otimes e^{x}\right)}\in\QQ\llbracket x,y,w\rrbracket
\]
where $e^{x}$ denotes a trivial line bundle with nontrivial $\CS$
action with equivariant first Chern class $x$. Motivated by this
power series we define a generating function 
\begin{equation}
P_{S,\LL,m}:=\sum_{n\geq0}\left(-w\right)^{n}P_{S,\LL,m,n+1-h}.\label{eq:8}
\end{equation}
 where $h$ is the arithmetic genus of the curve $C$ in $S$ with
$\OO(C)\simeq\LL$ so that for the pair $(\FF,s)\in\PS$, $n=\chi-1+h$
.

By Theorem \ref{prop:2}, after substituting $\tweight$ by $y$ we
can rewrite $P_{S,\LL,m}$ as 
\[
\coeff_{x^{\delta}}\left[D^{S,\LL}(x,y,w)X_{-y}(x)^{\delta+1}\left(\frac{\yb e^{-x\left(\yb\right)}}{\yb}\right)^{m}\right]
\]
Note that 
\[
Q_{S,\LL,m}:=\coeff_{x^{\delta}}\left[D^{S,\LL}(x,y,w)X_{-y}(x)^{\delta+1}\right]
\]
is equation (2.1) of \cite{GS:15} and 
\[
\left(\frac{\yb e^{-x\left(\yb\right)}}{\yb}\right)^{m}
\]
is a power series starting with $1$. 

\selectlanguage{english}%
In \cite{GS:15}, Gottsche and Shende defined the power series $N_{\chi(\LL)-1-k,[S,\LL]}^{i}(y)$
by the following equation:
\begin{equation}
\sum_{i\in\ZZ}N_{\chi(\LL)-1-k,[S,\LL]}^{i}(y)\left(\frac{w}{(1-y^{-1/2}w)(1-y^{1/2}w)}\right)^{i+1-g}=Q_{S,\LL,m}\label{eq:3-1}
\end{equation}
Motivated by this we also define $M_{\chi(\LL)-1-m,[S,\LL]}^{i}(y)$
as 
\begin{equation}
\sum_{i\in\ZZ}M_{\chi(\LL)-1-m,[S,\LL]}^{i}(y)\left(\frac{w}{(1-y^{-1/2}w)(1-y^{1/2}w)}\right)^{i+1-g}=\bpslm\label{eq:4-1}
\end{equation}

Let's define $\frac{1}{Q}=\frac{(1-y^{-1/2}w)(1-y^{1/2}w)}{w}=w+w^{-1}-y^{-1/2}-y^{1/2}$
and recall a conjecture from \cite{GS:14}.
\begin{conjecture}
[Conjecture 55 of \cite{GS:14}]\label{conj:1}
\[
\left(\frac{w(Q)}{Q}\right)^{1-g(\LL)}D^{S,\LL}(x,y,w(Q))\in\QQ[y^{-1/2},y^{1/2}]\llbracket x,xQ\rrbracket
\]
\end{conjecture}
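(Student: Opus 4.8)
The plan is to reduce Conjecture~\ref{conj:1} to a statement about the reduced $K$-theoretic stable pairs invariants constructed above, combining two structural inputs: \emph{universality} of tautological integrals over $\SN$, and the \emph{refined BPS expansion} of the resulting generating function. First, each coefficient of $w^{n}$ in $D^{S,\LL}(x,y,w)$ is the integral over $\SN$ of a universal polynomial in the Chern classes of $T\SN$ and $\LL^{[n]}$, hence --- by the Ellingsrud--G\"{o}ttsche--Lehn type universality used already in \cite{KST:11,KT:14} --- a fixed polynomial in $\LL^{2}$, $\LL\cdot K_{S}$, $K_{S}^{2}$, $c_{2}(S)$ with coefficients in $\QQ(y^{1/2})[x]$. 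Since $g(\LL)=1+\half(\LL^{2}+\LL\cdot K_{S})$, the prefactor $\left(w(Q)/Q\right)^{1-g(\LL)}$ is a power of the universal unit $w(Q)/Q\equiv 1 \bmod Q$ whose exponent is linear in $\LL^{2}+\LL\cdot K_{S}$. Consequently every coefficient of $x^{a}Q^{b}$ in the expression of Conjecture~\ref{conj:1} is a universal polynomial in the four Chern numbers with $\QQ(y^{1/2})$ coefficients, and an interpolation argument in the Chern-number data reduces the conjecture to verifying it for sufficiently many test pairs $(S,\LL)$ --- for instance projective planes, quadrics, products of curves, and surfaces of positive geometric genus --- and, granting the G\"{o}ttsche--Shende multiplicativity of $D^{S,\LL}$ in $(S,\LL)$, to the four universal factors one at a time.

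For such a test pair one crosses to the stable pairs side. By Theorem~\ref{prop:2} and (\ref{eq:8}) the relevant $x$-truncations of $D^{S,\LL}$ compute the reduced $K$-theoretic PT invariants $\pslmn$ of $X=K_{S}$, which by Corollary~\ref{cor:Computation} carry all of $\bpxlmn$ once $\LL$ is $\left(2\delta+1\right)$-very ample. Passing to the variable $Q$ via $1/Q=w+w^{-1}-y^{-1/2}-y^{1/2}$, the refined analogue of the Kool--Shende--Thomas identity of \cite{KST:11} should express the resulting series as a \emph{finite} $\QQ[y^{-1/2},y^{1/2}]$-linear combination of the kernels
\[
\left(\frac{w}{(1-y^{-1/2}w)(1-y^{1/2}w)}\right)^{i+1-g(\LL)}=Q^{i+1-g(\LL)},
\]
the coefficients being the refined node counts $N^{i}_{[S,\LL]}(y)$ of (\ref{eq:3-1}). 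Finiteness is geometric: for $\LL$ very ample the curves in $|\LL|$ have geometric genus between $0$ and the arithmetic genus $g(\LL)$, so $N^{i}_{[S,\LL]}=0$ outside $0\le i\le g(\LL)$. The variable $x$ records incidence with points, i.e. the passage to a sublinear system $\PP^{\varepsilon}\subset|\LL|$, and a $\delta$-nodal member sits in codimension $\delta$; carrying this through shows that the coefficient of $Q^{b}$ in the expansion is divisible by $x^{b}$. As $w(Q)/Q$ is a unit of $\QQ[y^{-1/2},y^{1/2}]\llbracket Q\rrbracket$, combining the last two facts yields membership in $\QQ[y^{-1/2},y^{1/2}]\llbracket x,xQ\rrbracket$ for the test pairs, hence in general by the reduction of the first step.

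The main obstacle I foresee is the Laurent-polynomiality in $y^{1/2}$ of the refined invariants $N^{i}_{[S,\LL]}(y)$ --- equivalently, the cancellation of all $\left(y^{-1/2}-y^{1/2}\right)$-denominators produced by the $X_{-y}$-classes after the change of variables. At $y=1$ this is the linear-algebra argument of \cite{KST:11} expressing the integers $n_{r,C}$ through Euler characteristics of the relative Hilbert schemes $\hilb^{n}(\CC/|\LL|)$; its refinement would replace Euler characteristic by the normalized $\chi_{-y}$-genus, which is only additive and multiplicative for \emph{smooth proper} varieties, whereas the pertinent strata of $\hilb^{n}(\CC/|\LL|)$ are singular and reducible. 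A natural route around this is to stay with the manifestly $\QQ(y^{1/2})$-valued holomorphic Euler characteristic of $\hat{\OO}^{vir}_{\PS}$ --- the object of Corollary~\ref{cor:Computation} --- and to show directly that, after $w\mapsto w(Q)$, the poles at $y^{1/2}=\pm1$ disappear; this pole-cancellation, together with the genus bound, is where essentially all the difficulty lies, the universality reduction and the $x$-versus-$Q$ bookkeeping being routine by comparison. Alternatively, proving the G\"{o}ttsche--Shende multiplicativity conjecture and matching the four universal factors against the refined topological vertex computations of \cite{CKK:12} would settle the test cases outright.
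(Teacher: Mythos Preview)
The statement you are attempting to prove is labelled \emph{Conjecture} in the paper and is cited as Conjecture~55 of \cite{GS:14}; the paper does \emph{not} prove it. It is invoked only as a hypothesis: Proposition~\ref{prop:Assume-conjecture} begins ``Assume Conjecture~\ref{conj:1}'' and draws consequences about $M^{i}_{\delta,[S,\LL]}$ and $N^{i}_{\delta,[S,\LL]}$. So there is no ``paper's own proof'' to compare against.

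Your text is accordingly not a proof but a programme, and you say so yourself (``The main obstacle I foresee\ldots''). The two substantive steps you would need --- the refined BPS expansion with coefficients in $\QQ[y^{-1/2},y^{1/2}]$ (pole cancellation at $y^{1/2}=\pm 1$) and the divisibility of the $Q^{b}$-coefficient by $x^{b}$ --- are exactly the content of the conjecture, not consequences of anything established in the paper. Universality in the four Chern numbers reduces the question to test surfaces, but it does not by itself produce Laurent polynomiality in $y^{1/2}$, nor does the geometric genus bound on curves in $|\LL|$ imply the required vanishing without already knowing the refined KST-type identity you invoke. Until either the pole cancellation or the multiplicativity-plus-vertex matching is actually carried out, this remains an outline rather than a proof.
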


Motivated by the conjecture above we define another power series
\[
\tilde{D}^{S,\LL}(x,y,Q):=\left(\frac{w(Q)}{Q}\right)^{1-g(\LL)}D^{S,\LL}(x,y,w(Q)).
\]

\begin{prop}
\label{prop:Assume-conjecture}Assume \textup{Conjecture \ref{conj:1}.
For $\chi(\LL)-1\geq k\geq0$ we have }

\textup{1. $M_{\chi(\LL)-1-k,[S,\LL]}^{i}(y)=0$ and $N_{\chi(\LL)-1-k}^{i}(y)=0$
for $i>\chi(\LL)-1-k$ and for $i\leq0$. }

\textup{2. $M_{\chi(\LL)-1-k,[S,\LL]}^{i}(y)$ and $N_{\chi(\LL)-1-k}^{i}(y)$
are Laurent polynomials in $y^{1/2}$.}

\textup{3. Furthermore $M_{\chi(\LL)-1-k,[S,\LL]}^{\chi(\LL)-1-k}(y)=N_{\chi(\LL)-1-k,[S,\LL]}^{\chi(\LL)-1-k}(y)$.
Moreover 
\[
\sum_{i\ge0}M_{\delta,[S,\LL]}^{\delta}(y)\left(s\right)^{\delta}=\tilde{D}^{S,\LL}(x,y,\frac{s}{x})|_{x=0}=\sum_{\delta\geq0}N_{\delta,[S,\LL]}^{\delta}(y)s^{\delta}
\]
}
\end{prop}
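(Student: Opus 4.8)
The plan is to reduce all three assertions to elementary coefficient extraction from the single series $\tilde{D}^{S,\LL}(x,y,Q)$, with Conjecture~\ref{conj:1} fixing its shape. Write $\tilde{D}^{S,\LL}(x,y,Q)=\sum_{a,b\ge0}d_{a,b}(y)\,x^{a}(xQ)^{b}$ with $d_{a,b}\in\QQ[y^{-1/2},y^{1/2}]$; this is the meaning of $\tilde{D}^{S,\LL}\in\QQ[y^{-1/2},y^{1/2}]\llbracket x,xQ\rrbracket$, and $\tilde{D}^{S,\LL}(x,y,0)=D^{S,\LL}(x,y,0)=1$ forces $d_{0,0}=1$, $d_{a,0}=0$ for $a\ge1$. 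First I would put $w=w(Q)$ in $D^{S,\LL}$ and apply Conjecture~\ref{conj:1} to get
\[
D^{S,\LL}(x,y,w(Q))=\bigl((1-y^{-1/2}w(Q))(1-y^{1/2}w(Q))\bigr)^{g-1}\,\tilde{D}^{S,\LL}(x,y,Q).
\]
Since $X_{-y}(x)^{\delta+1}$ and $\bigl(\tfrac{\yb\,e^{-x(\yb)}}{\yb}\bigr)^{m}$ are power series in $x$ with constant term $1$ carrying no $Q$, the operator $\coeff_{x^{\delta}}$ pulls the $Q$-only prefactor out, so after the change of variable $Q_{S,\LL,m}=\bigl((1-y^{-1/2}w(Q))(1-y^{1/2}w(Q))\bigr)^{g-1}G(Q)$ and $\bpslm=\bigl((1-y^{-1/2}w(Q))(1-y^{1/2}w(Q))\bigr)^{g-1}\tilde{G}(Q)$, where $G(Q)=\coeff_{x^{\delta}}[\tilde{D}^{S,\LL}(x,y,Q)X_{-y}(x)^{\delta+1}]$ and $\tilde{G}(Q)=\coeff_{x^{\delta}}[\tilde{D}^{S,\LL}(x,y,Q)X_{-y}(x)^{\delta+1}(\tfrac{\yb\,e^{-x(\yb)}}{\yb})^{m}]$. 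Since in $\sum_{a,b}d_{a,b}x^{a}(xQ)^{b}$ the power of $Q$ never exceeds the power of $x$ and the remaining factors only raise the $x$-degree, $G$ and $\tilde{G}$ are polynomials in $Q$ of degree $\le\delta$ with coefficients in $\QQ[y^{-1/2},y^{1/2}]$.

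For parts~1 and~2 I would compare this with the defining relations for $N^{i}_{\chi(\LL)-1-k,[S,\LL]}$ and $M^{i}_{\chi(\LL)-1-m,[S,\LL]}$ (recall $k=m$; put $\delta=\chi(\LL)-1-k$), which present $Q_{S,\LL,m}$ and $\bpslm$ as $\sum_{i}N^{i}_{\delta}Q^{i+1-g}$ and $\sum_{i}M^{i}_{\delta}Q^{i+1-g}$ after the same substitution. Thus $N^{i}_{\delta}$ and $M^{i}_{\delta}$ are the $Q$-expansion coefficients of $\bigl(\tfrac{w(Q)}{Q}\bigr)^{g-1}$ times a polynomial in $Q$ of degree $\le\delta$, where $w(Q)\in\QQ[y^{-1/2},y^{1/2}]\llbracket Q\rrbracket$ and $w(Q)=Q+O(Q^{2})$. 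Carrying out the computation of \cite{GS:15} that handles $N^{i}_{\delta}$ then gives that this expansion is a Laurent polynomial supported in $1\le i\le\delta$ with coefficients in $\QQ[y^{-1/2},y^{1/2}]$, and the identical computation with $G$ replaced by $\tilde{G}$ handles $M^{i}_{\delta}$.

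For part~3 I would isolate the index $i=\delta$, which corresponds to $\coeff_{Q^{\delta}}G$ (resp.\ $\coeff_{Q^{\delta}}\tilde{G}$) because the prefactor has constant term $1$ and hence cannot raise the $Q$-degree. In $\coeff_{x^{\delta}}[\tilde{D}^{S,\LL}(x,y,Q)\,P(x)]$ with $P(x)$ any power series of constant term $1$, the only way to produce $Q^{\delta}$ is from the summand $d_{0,\delta}(y)\,x^{0}(xQ)^{\delta}$ of $\tilde{D}^{S,\LL}$, and the factor $(xQ)^{\delta}=x^{\delta}Q^{\delta}$ already saturates the $x$-degree, leaving no room for $P$ above its constant term; hence $\coeff_{Q^{\delta}}G=\coeff_{Q^{\delta}}\tilde{G}=d_{0,\delta}(y)$ and $M^{\delta}_{\delta,[S,\LL]}(y)=N^{\delta}_{\delta,[S,\LL]}(y)=d_{0,\delta}(y)$ --- in particular the factor $(\tfrac{\yb\,e^{-x(\yb)}}{\yb})^{m}$ distinguishing $\bpslm$ from $Q_{S,\LL,m}$ is irrelevant to the top coefficient. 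Finally, substituting $Q=s/x$ turns $\tilde{D}^{S,\LL}(x,y,Q)=\sum_{a,b}d_{a,b}(y)x^{a}(xQ)^{b}$ into $\sum_{a,b}d_{a,b}(y)x^{a}s^{b}$, and setting $x=0$ kills every term with $a\ge1$, so
\[
\tilde{D}^{S,\LL}\bigl(x,y,\tfrac{s}{x}\bigr)\big|_{x=0}=\sum_{\delta\ge0}d_{0,\delta}(y)s^{\delta}=\sum_{\delta\ge0}M^{\delta}_{\delta,[S,\LL]}(y)s^{\delta}=\sum_{\delta\ge0}N^{\delta}_{\delta,[S,\LL]}(y)s^{\delta}.
\]

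The hard part will be the second step: matching the elementary expression above against the G\"ottsche--Shende definition of $N^{i}$ and pinning the support down to exactly $1\le i\le\delta$. The delicate ingredient is the genus-shift prefactor $\bigl(\tfrac{w(Q)}{Q}\bigr)^{g-1}$, which is a genuine infinite power series in $Q$, so it is not formal that multiplying a degree-$\le\delta$ polynomial by it and re-expanding in $Q$ leaves only finitely many nonzero coefficients; this is where one has to use the precise normalisations of \cite{GS:15}, reproduced above. I expect the needed computation to be exactly the one already done there for $N^{i}_{\delta}$, so the only genuinely new input is that it is insensitive to the constant-term-$1$ factor $(\tfrac{\yb\,e^{-x(\yb)}}{\yb})^{m}$ --- whence $M^{\delta}_{\delta}=N^{\delta}_{\delta}$ --- which is immediate from the $x^{\delta}Q^{\delta}$ count above.
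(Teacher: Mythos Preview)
Your overall strategy matches the paper's: pass to $\tilde{D}^{S,\LL}(x,y,Q)$ via Conjecture~\ref{conj:1}, read off the finite support and the Laurent-polynomial nature of the coefficients from the shape $\QQ[y^{\pm1/2}]\llbracket x,xQ\rrbracket$, and then observe that the extra factor $\bigl(\tfrac{\yb\,e^{-x(\yb)}}{\yb}\bigr)^{m}$ has constant term $1$ and hence cannot affect the top coefficient. Your part~3 argument (saturation by $(xQ)^{\delta}$, then $s=xQ$, $x=0$) is exactly what the paper does.

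Where you diverge is in parts~1 and~2. You separate out the genus prefactor $(w(Q)/Q)^{g-1}$, worry that it is an infinite series in $Q$, and then defer the finite-support claim to a computation in \cite{GS:15}. The paper avoids this entirely: it multiplies the defining identities $\sum_i N^i_\delta\,Q^{i+1-g}=Q_{S,\LL,m}$ and $\sum_i M^i_\delta\,Q^{i+1-g}=\bpslm$ through by the prefactor and by $x^{\delta}$ so that the right-hand sides become literally the $x^{\delta}$-homogeneous pieces of $\tilde{D}^{S,\LL}(x,y,Q)\,X_{-y}(x)^{\delta+1}$ and of $\tilde{D}^{S,\LL}(x,y,Q)\,X_{-y}(x)^{\delta+1}\bigl(\tfrac{\yb\,e^{-x(\yb)}}{\yb}\bigr)^{m}$. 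The point is that the prefactor is \emph{already} absorbed into the definition of $\tilde{D}^{S,\LL}$, so once the conjecture tells you $\tilde{D}^{S,\LL}\in\QQ[y^{\pm1/2}]\llbracket x,xQ\rrbracket$, the $x^{\delta}$-piece is a finite sum $\sum_{i=0}^{\delta}(\text{Laurent poly})\,x^{\delta-i}(xQ)^{i}$ and you are done --- no appeal to \cite{GS:15} needed. Your detour through $G(Q)$, $\tilde{G}(Q)$ and the separate prefactor is correct but unnecessary; the ``hard part'' you flag evaporates once you keep the prefactor inside $\tilde{D}^{S,\LL}$.
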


\begin{proof}
After substituting $w$ by $w(Q)$ we rewrite equation (\ref{eq:3-1})
and (\ref{eq:4-1})
\[
\sum_{i\in\ZZ}N_{\delta,[S,\LL]}^{i}(y)x^{\delta-i}\left(xQ\right)^{i}=\left[\tilde{D}^{S,\LL}(x,y,Q)X_{-y}(x)^{\delta+1}\right]_{x^{\delta}}
\]
\begin{multline*}
\sum_{i\in\ZZ}M_{\delta,[S,\LL]}^{i}(y)x^{\delta-i}\left(xQ\right)^{i}=\\
\left[\tilde{D}^{S,\LL}(x,y,Q)X_{-y}(x)^{\delta+1}\left(\frac{y^{1-/2}-y^{1/2}e^{-x\left(\yb\right)}}{\yb}\right)^{m}\right]_{x^{\delta}}.
\end{multline*}
By Conjecture \ref{conj:1} 
\[
\sum_{i\in\ZZ}N_{\delta,[S,\LL]}^{i}(y)x^{\delta-i}\left(xQ\right)^{i},\sum_{i\in\ZZ}M_{\delta,[S,\LL]}^{i}(y)x^{\delta-i}\left(xQ\right)^{i}\in\QQ[y^{-1/2},y^{1/2}]\llbracket x,xQ\rrbracket
\]
 so that the only possible power of $Q$ that could appear is $i=0,\ldots,\delta$.
We can directly conclude that $N_{\delta,[S,\LL]}^{i},M_{\delta,[S,\LL]}^{i}$
are Laurent polynomial in $y^{1/2}.$ Set $s=xQ,$ so that by Conjecture
\ref{conj:1} we can write $\tilde{D}^{S,\LL}(x,y,Q)$ as power series
of $x$ and $s$ i.e $\tilde{D}^{S,\LL}(x,y,\frac{s}{x})\in\QQ[y^{-1/2},y^{1/2}]\llbracket x,s\rrbracket.$
And since 
\begin{align*}
X_{-y}(x=0) & =1\\
\left(\frac{y^{1-/2}-y^{1/2}e^{-x\left(\yb\right)}|_{x=0}}{\yb}\right)^{m} & =1
\end{align*}
we can conclude that 
\begin{align*}
\sum_{i\ge0}M_{\delta,[S,\LL]}^{\delta}(y)\left(s\right)^{\delta} & =\tilde{D}^{S,\LL}(x,y,\frac{s}{x})|_{x=0}\\
 & =\sum_{\delta\geq0}N_{\delta,[S,\LL]}^{\delta}(y)s^{\delta}
\end{align*}
\end{proof}
\selectlanguage{american}%
If $H^{i}(\LL)=0$ for $i>0$ and $\LL$ is $\delta$-very ample,
then $N_{\delta,[S,\LL]}^{\delta}(y)$ is the refinement defined by
Goettsche and Shende in \cite{GS:14} of $n_{\delta}(\LL)$ that computes
the number of $\delta$-nodal curves in $|\LL|$. Theorem \ref{prop:2}
and Theorem \ref{prop:evaluation} gives geometric argument for the
equality $M_{\delta,[S,\LL]}^{\delta}(y)|_{y=1}=N_{\delta,[S,\LL]}^{\delta}(y)|_{y=1}$.
Without assuming the conjecture above we would like to know if Proposition
\ref{prop:Assume-conjecture} still true.
\selectlanguage{english}%

\section{Acknowledgements}

This project start as my PhD thesis under the supervision of Lothar
G\"ottsche under ICTP/SISSA joint PhD program. The finisihing of
this project is supported by P3MI ITB research funds.

\global\long\def\bibname{References}%

\selectlanguage{american}%
\bibliographystyle{styles/bibtex/gillow}
\addcontentsline{toc}{section}{\refname}\bibliography{references/Bibliography}
\selectlanguage{english}%

\end{document}